\definecolor{newcolor}{rgb}{.8,.349,.1}
\DeclareMathAlphabet{\mathbfsf}{\encodingdefault}{\sfdefault}{bx}{n}
\theoremstyle{definition}
\theoremstyle{plain}
\theoremstyle{remark}
\pgfplotsset{compat=newest}
\pgfplotsset{plot coordinates/math parser=false}
\newlength\figureheight
\newlength\figurewidth
\pgfplotsset{every axis plot/.append style={line width=1.5pt},
    legend style={font=\footnotesize, 
        text height=1.0ex,
        draw=black,
        fill=white,
        legend cell align=left}}
\Crefname{defn}{definition}{definitions}
\Crefname{defn}{Definition}{Definitions}
\Crefname{asm}{assumption}{assumptions}
\Crefname{asm}{Assumption}{Assumptions}
\crefname{lem}{lemma}{lemmas} 
\Crefname{lem}{Lemma}{Lemmas}
\crefname{prop}{proposition}{propositions} 
\Crefname{prop}{Proposition}{Propositions}
\crefname{thm}{theorem}{theorms} 
\Crefname{thm}{Theorem}{Theorms}
\crefname{cor}{corollary}{corollaries}
\Crefname{cor}{Corollary}{Corollaries}
\newcounter{subequation}
\newlength\mtabskip\mtabskip=-1.25cm
\def\mtabLong{long}
\newcommand{\mr}{\mathrm}
\newcommand{\veg}[1]{\bm{#1}}     % geometrical / physical vectors
\newcommand{\mat}[1]{\mathsfbfit{#1}} % n x n-dimensional matrix
\renewcommand{\vec}[1]{\mathsfbfit{#1}} %n-dimensional vector
\newcommand{\op}[1]{\mathcal{#1}} % operator that takes a scalar-valued function
\newcommand{\vecop}[1]{\bm{\mathcal{#1}}} % operator that takes a vector-valued function
\newcommand{\dd}{\mathrm{d}}  % The differential symbol is actually an operator
\newcommand{\R}{\mathbb{R}}
\DeclareMathOperator{\cond}{cond}
\DeclareMathOperator{\diag}{diag}
\newcommand{\T}{\mr{T}}
\newcommand\restr[2]{{% we make the whole thing an ordinary symbol
        \left.\kern-\nulldelimiterspace % automatically resize the bar with \right
        #1 % the function
        \vphantom{|} % pretend it's a little taller at normal size
        \right|_{#2} % this is the delimiter
}}
\newcommand\rst[3]{{% we make the whole thing an ordinary symbol
        \left.\kern-\nulldelimiterspace % automatically resize the bar with \right
        #1 % the function
        \vphantom{|} % pretend it's a little taller at normal size
        \right|_{#2}^{#3} % this is the delimiter
}}
\DeclareAcronym{DG}
{
    short = DG ,
    long = discontinuous Galerkin
}
\DeclareAcronym{ACA}
{
    short = ACA ,
    long = adaptive cross approximation
}
\DeclareAcronym{EFIE}
{
    short =  EFIE ,
    long = electric field integral equation
}
\DeclareAcronym{MFIE}
{
    short =  MFIE ,
    long = magnetic field integral equation
}
\DeclareAcronym{CFIE}
{
    short =  CFIE ,
    long = combined field integral equation
}
\DeclareAcronym{MUIE}
{
    short =  MUIE ,
    long = Müller integral equation
}
\DeclareAcronym{PMCHWT}
{
    short =  PMCHWT ,
    long = Poggio-Miller-Chang-Harrington-Wu-Tsai integral equation
}
\DeclareAcronym{SPD}
{
    short =  SPD ,
    long = {symmetric, positive definite}
}
\DeclareAcronym{SPSD}
{
    short =  SPD ,
    long = {symmetric, positive semi-definite}
}
\DeclareAcronym{PEC}
{
    short =  PEC ,
    long = perfectly electrically conducting
}
\DeclareAcronym{RWG}
{
    short = RWG ,
    long = Rao-Wilton-Glisson
} 
\DeclareAcronym{BC}
{
    short = BC ,
    long = Buffa-Christiansen
}
\DeclareAcronym{SVD}
{
    short = SVD ,
    long = singular value decomposition
}
\DeclareAcronym{CG}
{
    short = CG ,
    long = conjugate gradient
} 
\DeclareAcronym{PCG}
{
    short = PCG ,
    long = preconditioned conjugate gradient
} 
\DeclareAcronym{CGS}
{
    short = CGS ,
    long = conjugate gradient squared
}
\DeclareAcronym{CMP}
{
    short = CMP ,
    long = Calderón multiplicative preconditioner
} 
\DeclareAcronym{RFCMP}
{
    short = RF-CMP ,
    long = refinement-free Calderón multiplicative preconditioner
} 
\DeclareAcronym{HPD}
{
    short = HPD ,
    long = {Hermitian, positive definite}
} 
\DeclareAcronym{RHS}
{
    short = RHS ,
    long = {right-hand side}
}
\DeclareAcronym{PW}
{
    short = PW ,
    long = {plane wave}
} 
\DeclareAcronym{HD}
{
    short = HD ,
    long = {Hertzian dipole}
} 
\DeclareAcronym{FF}
{
    short = FF ,
    long = {far-field}
} 
\DeclareAcronym{NF}
{
    short = NF ,
    long = {near-field}
}  
\newcolumntype {n}{c}
\newcolumntype {N}{>{\small}c}
\newcolumntype {L}{>{\small}l}
\newcolumntype {F}{>{\footnotesize}c}
\newcolumntype {v}[1]{>{\raggedright \hspace {0pt}} p {#1}}
\newcolumntype {V}[1]{>{\small \raggedright \hspace {0pt}} p {#1}}
\newcolumntype{d}[1]{>{\DC@{.}{.}{#1}}c<{\DC@end}}
\newcolumntype{R}[1]{%
    >{\begin{turn}{90}\begin{minipage}{#1}\small\raggedright\hspace{0pt}}l%
            <{\end{minipage}\end{turn}}%
}
\NewDocumentCommand{\TA}{o}{
    \IfNoValueTF {#1} {%
        \vecop T_{\kern-2pt\mr{A}}
    }
    {
        \vecop T_{\kern-2pt\mr{A},#1}
    }
}
\NewDocumentCommand{\TPhi}{o}{
    \IfNoValueTF {#1} {%
        \vecop T_{\kern-2pt\Phiup}
    }
    {
        \vecop T_{\kern-2pt\Phiup,#1}
    }
}
\NewDocumentCommand{\matTA}{o}{
    \IfNoValueTF {#1} {%
        \mat T_\mr{A}   
        }
    {
        \mat T_{\mr{A},#1}
    }
}
\NewDocumentCommand{\matTPhi}{o}{
    \IfNoValueTF {#1} {%
        \mat T_\Phiup   
        }
    {
        \mat T_{\Phiup,#1}
    }
}
\NewDocumentCommand{\MSL}{o}{
    \IfNoValueTF {#1} {%
        \veg \Psi_\mr{SL}
        }
    {
        \veg \Psi_{\mr{SL},#1}
    }
}
\NewDocumentCommand{\MDL}{o}{
    \IfNoValueTF {#1} {%
        \veg \Psi_\mr{DL}
        }
    {
        \veg \Psi_{\mr{DL},#1}
    }
}
\NewDocumentCommand{\PA}{o}{
    \IfNoValueTF {#1} {%
        \veg \Psi_\mr{A}
        }
    {
        \veg \Psi_{\mr{A},#1}
    }
}
\NewDocumentCommand{\PPhi}{o}{
    \IfNoValueTF {#1} {%
        \veg \Psi_{\Phiup}
        }
    {
        \veg \Psi_{\Phiup,#1}
    }
}
\theoremstyle{definition}
\newtheorem{theorem}{Theorem}
\newtheorem{proposition}{Proposition}
\newtheorem{corollary}{Corollary}
\newcommand\LapLm[1][]{\mat{\Delta}_{#1}}
\def\Nm{\mat{N}}
\def\Dm{\mat{D}}
\def\Sm{\mat{S}}
\def\Dxm{\mat{D}^*}
\journal{Journal of Computational Physics}
\begin{document}

\verso{V. Giunzioni \textit{et al.}}

\begin{frontmatter}

\title{On a Calder\'{o}n preconditioner for the symmetric formulation of the electroencephalography forward problem without barycentric refinements}%
%\title{Type the title of your paper, only capitalize first
%word and proper nouns\tnoteref{tnote1}}%
%\tnotetext[tnote1]{This is an example for title footnote coding.}

\author[1]{Viviana \snm{Giunzioni}}
  
\author[2]{John E. \snm{Ortiz G.}%\fnref{fn1}
}
%\fntext[fn1]{This is author footnote for second author.}

\author[3]{Adrien \snm{Merlini}}
%% Third author's email
%\ead{author3@author.com}

\author[4]{Simon B. \snm{Adrian}}

\author[1]{Francesco P. \snm{Andriulli}\corref{cor1}}
\cortext[cor1]{Corresponding author: 
  \ead{francesco.andriulli@polito.it}.}
  
\address[1]{Department of Electronics and Telecommunications, Politecnico di Torino, 10129 Turin, Italy}
\address[2]{Electronic Engineering Department, Universidad de Nariño, Pasto 520002, Colombia}
\address[3]{Microwaves Department, IMT Atlantique, 29238 Brest, France}
\address[4]{Fakultät für Informatik und Elektrotechnik, Universität Rostock, 18051 Rostock, Germany}

\received{1 May 2013}
\finalform{10 May 2013}
\accepted{13 May 2013}
\availableonline{15 May 2013}
\communicated{S. Sarkar}

\begin{abstract}

We present a Calder\'{o}n preconditioning scheme for the symmetric formulation of the forward electroencephalographic (EEG) problem that cures both the dense-discretization and the high-contrast breakdown.
Unlike existing Calderón schemes presented for the EEG problem, it is refinement-free, that is,
the electrostatic integral operators are not discretized with basis functions defined on the barycentrically-refined dual mesh. In fact, in the preconditioner, we reuse the original system matrix thus reducing computational burden.
Moreover, the proposed formulation gives rise to a symmetric, positive-definite system of linear equations, which allows the application of the conjugate gradient method, an iterative method that exhibits a smaller computational cost compared to other Krylov subspace methods applicable to non-symmetric problems. 
Numerical results corroborate the theoretical analysis and attest of the efficacy of the proposed preconditioning technique on both canonical and realistic scenarios.
\end{abstract}

\begin{keyword}
%% MSC codes here, in the form: \MSC code \sep code
%% or \MSC[2008] code \sep code (2000 is the default)
% \MSC 41A05\sep 41A10\sep 65D05\sep 65D17
%% Keywords
%\KWD Keyword1\sep Keyword2\sep Keyword3
%\MSC 41A05\sep 41A10\sep 65D05\sep 65D17
\KWD Electroencephalography \sep EEG forward problem \sep Integral equations \sep Boundary element method \sep Calder\'{o}n preconditioning\sep Preconditioner
\end{keyword}

\end{frontmatter}

%\linenumbers

%% main text
%  \\ 
\section{Introduction} \label{sec:introduction}

Brain imaging techniques aim at fully determining the inner neural activity in terms of location, orientation, and intensity of the primary current, starting from some direct or indirect measurements of its correlated effects \citep{raichle2006brain,baillet2001electromagnetic}. Among them, source localization algorithms based on electroencephalographic (EEG) data are widely appreciated because of their high temporal resolution \citep{michel2019eeg,michel2012utilization} and of their compatibility with other imaging strategies, such as magnetoencephalography (MEG) \citep{schmitt2001numerical}, magnetic resonance imaging (MRI) \citep{benar2007singletrial, jorge2015highquality}, and positron emission tomography (PET) \citep{shah2013advances}. This technology aims at reconstructing the equivalent volume brain sources from the measurement of the resulting potential distribution at the scalp \citep{koessler2010source, plummer2008eeg}, which is known as the inverse EEG problem. Another inverse problem relying on EEG modelling is the one of inferring the electrical parameters of the biological tissues, that is, the electrical conductivity and the permittivity, from surfacic electroencephalographic measurements, known as electrical impedance tomography (EIT) \citep{cheney1999electrical}. 

These problems are usually addressed through multiple iterative solutions of the forward EEG problem \citep{grech2008review, darbas2019review, fuchs2001boundary}, which is the evaluation of the voltage function at the scalp resulting from a known electric activity inside the head \citep{hallez2007review, he1987electric}. It follows, on the one hand, that the accuracy achievable in the resolution of the inverse problem is strictly limited by the one of the forward EEG problem; on the other hand, the resolution time is also affected by the complexity of the numerical model describing the forward problem \citep{akalinacar2013effects}. These considerations evidence the crucial importance of determining realistic source and head models and defining affordable and efficient numerical schemes for the approximation of the electrostatic potential.

As far as the source modelling is concerned, equivalent currents, formed by the superposition of the effects of a number of infinitesimal dipoles, are widely considered to be an adequate representation of the electric brain activity whose effects are observed from the scalp \citep{demunck1988mathematical}, so that the majority of available numerical schemes relies on this approximation \citep{darbas2019review}. Many disparate head models have instead been developed and are still under active investigation \citep{darbas2019review}. They can be subdivided in two main categories. The methods of one of the categories assume that the electrical properties of the biological tissues constituting the head vary continuously so that a discretization of the full head volume is needed to numerically solve the problem; the formulations adopted in this case are both differential, traceable to the class of finite element method \citep{awada1997computational} or finite difference methods \citep{bruno2006fdm}, and integral \citep{rahmouni2017two,henry2021lowfrequency} in nature, and can easily handle heterogeneities and anisotropies of the head tissues conductivity profiles. Alternatively, the methods of the other category assume piecewise-homogeneity, which allows subdividing the overall head into $N$ compartments, representing separate biological tissues with distinct characteristics, such as, for example, the skin, the skull, the grey matter, and the white matter, and to approximate the above mentioned electrical properties as constant in each compartment \citep{darbas2019review}. By following this approach, there is no need for discretizing the full volumetric domain. Instead, a boundary element method (BEM) for the discretization of the integral formulation applied on the boundaries of the compartments only can be employed \citep{kybic2005common}, allowing the reduction of the dimensionality of the problem by one.
Moreover, boundary integral formulations applied to piecewise homogeneous head models can also handle anisotropies, as shown in \citep{pillain2019handling}.
The spherical head model (\Cref{subfig-2:sphere_model}), that is, the representation of the head as the union of constant conductivity nested layers with spherical boundaries, for which the analytic solution of the forward problem is available \citep{demunck1988potential,zhang1995fast}, and the realistic head model (\Cref{subfig-3:mri_model}), where the compartments in which the physical parameters are assumed constant are of general shape, are two examples of head models falling into this last category. 

An historical overview of the most commonly employed boundary element schemes employed to solve the forward EEG problem is available in \citep{kybic2005common}. The single- and double-layer approaches summarized there are based on a partial exploitation of the representation theorem \citep{nedelec2001acoustic, steinbach2008numerical}; the symmetric formulation instead, presented in \citep{kybic2005common}, follows from a clever full use of the representation theorem and represents nowadays one of the most favorable choices for the numerical solution of the forward problem \citep{darbas2019review, vorwerk2012comparison}. 
The symmetric formulation has been shown to provide a higher level of accuracy in its results when compared to the single- and double-layer approaches, especially for shallow sources approaching the boundaries of the compartments \citep{kybic2005common, vorwerk2012comparison}, which in turns leads to a higher reliability of the source localization algorithms based on it. This improvement comes at the cost of an higher computational complexity, that is, the linear system to be solved is approximately double in size with respect to the ones obtained from the single- and double-layer formulations. The increased cost is partially mitigated since the resulting interaction matrix becomes increasingly sparse when the number of compartments of the head model employed increases. Differently from the single- and double-layer formulations, which are second-kind integral equations, the symmetric formulation is of the first kind. As a consequence, its discretization results in a linear system whose conditioning worsens when the number of unknowns increases; this causes an increase of the solution time, a degradation of the solution accuracy, and, in some cases, prevents convergence altogether. This limitation actually prevents the application of the unpreconditioned symmetric formulation on complex realistic structures and opens the way of research toward efficient and effective preconditioning strategies, resulting in boundary integral formulations for the solution of the EEG forward problem that provide the same accuracy as the symmetric formulation and are immune to numerical instabilities.

In the last years, preconditioning strategies based on the Calder\'{o}n identities have gained in popularity in the computational electromagnetics community \citep{andriulli2008multiplicative}. They yield spectral equivalents of the inverse of the integral operators considered that are capable of preconditioning the formulation with respect to most sources of ill-conditioning, for example, denser discretization or lower frequencies. Calder\'{o}n preconditioning has been successfully applied to full-wave vectorial electromagnetic problems, such as the scattering from metallic \citep{andriulli2013wellconditioned,bagci2009calderon} and penetrable objects \citep{beghein2012calderon}, as well as to scalar, acoustic problems \citep{vantwout2021benchmarking, steinbach1998construction}. A Calder\'{o}n preconditioning strategy for the EEG symmetric formulation has also been proposed in \citep{ortizg.2018calderon}. The Calder\'{o}n preconditioning approach yields well-conditioned formulations in all these cases, at the cost of building a dual form of the BEM interaction matrix.
Most of these schemes require the evaluation of dense electromagnetic operators on dual basis functions defined on the dual mesh, constructed for example via a barycentric refinement of the primal mesh, which leads to a non-negligible computational burden. Recently, a refinement-free Calder\'{o}n preconditioning strategy for the electric field integral equation (EFIE) has been proposed \citep{adrian2019refinementfree}, which leverages suitably modified graph Laplacians.

In this work, we propose a Calder\'{o}n-like preconditioning for the symmetric formulation without resorting to integral operators discretized on the dual mesh, thereby reducing the computational cost compared with standard Calderón schemes such as \citep{ortizg.2018calderon}. The proposed formulation gives rise to a well-conditioned, symmetric, positive definite system of linear equations, amenable to fast iterative solvers, which remains stable under the different sources of ill-conditioning affecting the solution time of the non-preconditioned formulation.
We obtain this by leveraging the Laplace-Beltrami operator in our preconditioning scheme, which can cheaply be discretized on the primal and on the dual mesh, and suitable applying it to our multiplicative preconditioning scheme. In contrast to  \citep{adrian2019refinementfree}, special care needs to be taken in our theoretical framework given that the underlying operator of the symmetric formulation is block-structured and that the final formulation is required not only to be immune to the dense-discretization, but also to the high-contrast breakdown \citep{ortizg.2018calderon}.
Moreover, a consistent deflation strategy is introduced to avoid the nullspace of the symmetric formulation and the Laplace-Beltrami operator.
Numerical results demonstrate the effectiveness of our scheme, both for canonical and realistic head models.
Preliminary results, devoid of the theoretical apparatus presented here, have been presented in the conference contribution \citep{giunzioni2022new}.

This paper is organized as follows: in \Cref{sec:background} we review the forward EEG problem and the symmetric formulation for its numerical solution, providing the background and notation necessary for the following developments. \Cref{sec:conditionAnalysis} focuses on the numerical analysis of the two main sources of ill-conditioning plaguing the symmetric formulation. The novel preconditioning strategy is presented in \Cref{sec:formulation} following a step-by-step process: after the introduction of a proof-of-concept preconditioning example, we first propose a deflation strategy to obtain a non-singular symmetric formulation, before finally outlining the proposed well-posed, well-conditioned formulation, whose favorable stability properties are proved in \Cref{sec:well_cond}. Finally, \Cref{sec:result} complements the theoretical analysis with a numerical study of the formulations under test, to illustrate the effectiveness of the preconditioning. This analysis is applied to both canonical spherical models, for which a comparison with analytic solutions is possible, and realistic models generated from magnetic resonance imaging (MRI) data. Moreover, source localization algorithms relying on the proposed formulation have been applied on the realistic head model, validating the use of this technology for neuroimaging purposes in biomedical applications.

\section{Background and notation}
\label{sec:background}

In this section, we review the symmetric formulation for the solution of the forward EEG problem and set the notation for the elements needed to introduce the proposed formulation.

\subsection{The forward EEG problem}
The neural activity (i.e., the simultaneous activation of neurons situated in a region of the cortex) can be modeled by a primary current distribution, usually approximated as a combination of point dipoles \citep{demunck1988mathematical}. The forward EEG problem consists in determining the potential induced by this current at the scalp.

Mathematically, the problem is described by Poisson's equation, which is obtained from the Maxwell's system in its quasi-static approximation \citep{sarvas1987basic}.
This approximation is justified by the low frequency of the neural signals (mainly in the order of \SIrange{1}{100}{\Hz} \citep{darbas2019review}). Poisson's equation
\begin{equation}
\nabla \cdot \big(\sigma \nabla V\big) = \nabla \cdot \veg{j}
\end{equation}
describes the relation between the dipolar current inside the brain $\veg j$ (i.e., the source term of the problem) and the resulting potential distribution, $V$. 
In the forward problem, we are interested in solving the problem for spatial points $\veg r \in \Gamma \coloneqq \uppartial\Omega$, where $\Omega \subset \mathbb{R}^3$ is an open set modeling the head, and
the conductivity distribution $\sigma(\veg r)$ of the biological tissues is considered known.
Since the conductivity outside the head is zero, no current flows out of the head leading to the Neumann boundary condition
\begin{equation}
\sigma \uppartial_{\veg{n}} V = 0 \quad \text{for} \,\, \veg r \in \Gamma\,,
\end{equation}
where $\veg{n}$ is the outward unit normal vector field characterizing the boundary $\Gamma$ of the head domain.

\subsection{The symmetric formulation}
The symmetric formulation, which can be numerically solved in the BEM framework, relies on the piecewise-homogeneity assumption mentioned above, that is, we model the head as a set of non-overlapping, homogeneous compartments, representing distinct biological tissues.
Mathematically, we describe this by introducing
%Let $\{\Omega_i\}_{i=1}^N$ be 
a set of $N$ nested, open subsets of $\Omega$, denoted as $\{\Omega_i\}_{i=1}^N$, with smooth boundaries, such that $\cup_{i=1}^{N}\bar{\Omega}_i = \bar{\Omega}$ and $\Omega_i \cap \Omega_{j\ne i} = \emptyset$.
%\td{SBA: Maybe add a (2d) figure?}
Furthermore, $\Gamma_i$ denotes the boundary defined by $\Gamma_i \coloneqq \bar{\Omega_i} \cap \bar{\Omega}_{i+1}$, on which $\veg n_i$ is the unit-length normal vector directed towards $\Omega_{i+1}$, and $\Omega_{N+1}$ is the exterior of $\Omega$, that is, $\Omega_{N+1} \coloneqq \mathbb{R}^3\backslash\bar{\Omega}$. The notation employed is represented in \Cref{subfig-1:nested}.
In this work, we will assume isotropic conductivity modelled by the piecewise constant function
\begin{equation}
\sigma(\veg r) = \sum_{i=1}^{N+1} \sigma_i\, \text{\textit{1}}_{\Omega_i}(\veg r)\,,
\end{equation}
where $\sigma_i$ is scalar and $\text{\textit{1}}_{\Omega_i}(\veg r)$ is the indicator function of $\Omega_i$
\begin{equation}
\text{\textit{1}}_{\Omega_i}(\veg r) \coloneqq 
\begin{cases}
1\quad & \text{if $\veg r \in \Omega_i$}\, ,\\
0 \quad & \text{otherwise.}\\
\end{cases}
\end{equation}
Under these conditions, the forward EEG problem can be rewritten as
\begin{align}
\sigma_i \Deltaup V &= \nabla \cdot \veg{j} \quad \text{in} \,\, \Omega_i \, ,
\label{eqn:Poisson_boundary} \\
[V]_{\Gamma_i} &= 0 \quad\quad\,\,\, \forall i \le N \label{eqn:Poisson_bc1} \, ,\\
[\sigma \uppartial_{{\veg n}_i} V]_{\Gamma_i} &= 0 \quad\quad\,\,\, \forall i \le N , \label{eqn:Poisson_bc2}
\end{align}
where the symbol $[\cdot]_{\Gamma_i}$ denotes the jump of a function at the interface $\Gamma_i$, as defined in \citep{kybic2005common}.
%\td{SBA: Maybe we should add a formal definition of the jump?}
Boundary conditions \eqref{eqn:Poisson_bc1} and \eqref{eqn:Poisson_bc2} enforce the continuity of the potential and of the current across compartments.

The problem described by \eqref{eqn:Poisson_boundary}-\eqref{eqn:Poisson_bc2} can be recast as an integral equation.
One such integral equation is the symmetric formulation, which is derived by exploiting the representation theorem \citep[Theorem~3.1.1]{nedelec2001acoustic} and expressing the complete solution $V$ as the sum of an homogeneous solution accounting for the source term and an harmonic function  chosen to satisfy the boundary conditions \eqref{eqn:Poisson_bc1} and \eqref{eqn:Poisson_bc2}.
The homogeneous solution in free space can be obtained by application of the potential theory.
Let 
\begin{equation}
G(\veg r, \veg r') \coloneqq  \frac{1}{4\uppi |\veg r-\veg r'|}
\end{equation}
be the Green's function associated with the Laplace equation \citep{felsen1994radiation} \citep[Equation~5.7]{steinbach2008numerical}, then $v(\veg r) = -\int \left(\nabla \cdot \veg j(\veg r)\right)\, G(\veg r, \veg r') \dd\veg r'$ satisfies $\Deltaup v = \nabla \cdot \veg j$ for all $\veg r \in \R^3$.
Following \cite{kybic2005common}, we define then the piecewise source function $f_{\Omega_i}(\veg r) \coloneqq  \left(\nabla \cdot \veg j(\veg r)\right) \cdot\text{\textit{1}}_{\Omega_i}(\veg r)$ and introduce
\begin{equation}
v_{\Omega_i}(\veg r) \coloneqq  -\int_{\Omega_i} f_{\Omega_i}(\veg r) \, G(\veg r,\veg r') d\veg r'\,.
\end{equation}

In the symmetric formulation, the harmonic function is constructed as
\begin{equation}
u_{\Omega_i} \coloneqq  \begin{cases}
V-\frac{v_{\Omega_i}}{\sigma_i} & \text{in $\Omega_i$}\, ,\\
-\frac{v_{\Omega_i}}{\sigma_i} & \text{elsewhere,}
\end{cases}
\end{equation}
from which a system of $2N$ integral equations can be derived (see \citep{kybic2005common} for further details), reading
\begin{equation}
\begin{cases}
(\uppartial_{\veg n} v_{\Omega_{i+1}})_{\Gamma_i}-(\uppartial_{\veg n} v_{\Omega_i})_{\Gamma_i} &= -\op{D}^*_{i,i-1}p_{i-1}+2\op{D}^*_{ii}p_i-\op{D}^*_{i,i+1}p_{i+1}
 \\
&\quad\quad+\sigma_i \op{N}_{i,i-1}V_{i-1}-(\sigma_i+\sigma_{i+1})\op{N}_{ii}V_i+\sigma_{i+1}\op{N}_{i,i+1}V_{i+1} 
\\
\sigma_{i+1}^{-1}(v_{\Omega_{i+1}})_{\Gamma_i}-\sigma_{i}^{-1}(v_{\Omega_i})_{\Gamma_i} &= \mathcal{D}_{i,i-1}V_{i-1}-2\mathcal{D}_{ii}V_i+\mathcal{D}_{i,i+1}V_{i+1}
 \\
&\quad\quad-\sigma_{i}^{-1}\mathcal{S}_{i,i-1}p_{i-1}+(\sigma_i^{-1}+\sigma_{i+1}^{-1})\mathcal{S}_{ii}p_i-\sigma_{i+1}^{-1}\mathcal{S}_{i,i+1}p_{i+1}.
\end{cases}
\label{eqn:symmformsyst}
\end{equation}
The unknowns of the system are $V_i \coloneqq (V)_{\Gamma_i}$, denoting the restriction of $V$ to $\Gamma_i$, and $p_i \coloneqq \sigma_i[   \,\uppartial_{{\veg n}_i} V]_{\Gamma_i}$.
The integral operators involved in equations \eqref{eqn:symmformsyst} are defined as
\begin{align}
&\mathcal{S}_{ij}: H^{-1/2}(\Gamma_i) \rightarrow  H^{1/2}(\Gamma_j), \quad\,\,\,\,\mathcal{S}_{ij}\psi(\veg{r}) \coloneqq  \int_{\Gamma_j}G(\veg{r}- \bm{r}') \psi(\veg{r}') \dd S(\veg{r}') 
\label{eqn:Sop}\\
&\mathcal{D}_{ij}: H^{1/2}(\Gamma_i) \rightarrow  H^{1/2}(\Gamma_j), \quad\quad \mathcal{D}_{ij}\phi(\veg{r}) \coloneqq  \mathrm{p.v.} \int_{\Gamma_j}\uppartial_{\veg{n}'_i} G(\veg{r}- \veg{r}') \phi(\veg{r}') \dd S(\veg{r}') 
\label{eqn:Dop}\\
&\mathcal{D}^*_{ij}: H^{-1/2}(\Gamma_i) \rightarrow  H^{-1/2}(\Gamma_j), \quad\,
\mathcal{D}_{ij}^*\psi(\veg{r}) \coloneqq  \mathrm{p.v.} \int_{\Gamma_j}\uppartial_{\veg{n}_j} G(\veg{r}- \veg{r}') \psi(\veg{r}') \dd S(\veg{r}') 
\label{eqn:Dsop}\\
&\mathcal{N}_{ij}: H^{1/2}(\Gamma_i) \rightarrow  H^{-1/2}(\Gamma_j), \quad\,\,\,\,
\mathcal{N}_{ij}\phi(\veg{r}) \coloneqq  \mathrm{f.p.} \int_{\Gamma_j}\uppartial_{\veg{n}_j}\uppartial_{\veg{n}'_i}G(\veg{r}- \veg{r}') \phi(\veg{r}')\dd S(\veg{r}')
\label{eqn:Nop}
\end{align}
and are respectively named single-layer, double-layer, adjoint double-layer, and hypersingular operators.
The definition of the Sobolev spaces $H^s$, $s \in \{-1/2, 1/2\}$, in the mapping properties above can be found in \citep{steinbach2008numerical, sauter2011boundary}. 
In the equations above, $\text{p.v.}$ and $\text{f.p.}$ indicate the Cauchy principal value and the Hadamard finite part.
The subscript $_{ij}$ denotes that these operators act on a function defined on $\Gamma_i$ and yield a function on $\Gamma_j$; if this subscript is omitted, $i = j$ is implicitly assumed.

\subsection{Discretization of the symmetric formulation}
\label{sec:symmform_discr}
We employ the BEM to discretize and numerically solve the system of equations \eqref{eqn:symmformsyst}.
To this end, a mesher triangulates the surfaces $\Gamma_i$ resulting in a set of $N$ meshes $\Gamma_{h,i}$, $i=1,\dots,N$.
In the following, the $_h$ subscript will be omitted in the case its meaning is already clear from the context.
Each $\Gamma_{h,i}$ is composed of $N_{C,i}$ triangular cells, $\{c_{i,n}\}_{n=1}^{N_{C,i}}$, of area $A_{i,n}$, and $N_{V,i}$ vertices, $\{v_{i,m}\}_{m=1}^{N_{V,1}}$, and is characterized by the mesh refinement parameter $h_i$ that is defined as the average length of the edges of $\Gamma_{h,i}$.
Over each $\Gamma_{h,i}$, we define a set of piecewise constant $\{\pi_{i,n}\}_{n=1}^{N_{C,i}}$ patch functions and a set of piecewise linear $\{\lambda_{i,m}\}_{m=1}^{N_{V,i}}$ pyramid functions as
\begin{equation}
\pi_{i,n}(\veg r) \coloneqq  \begin{cases}
1 & \text{for $\veg r \in c_{i,n}$,} \\
0 & \text{elsewhere,}
\end{cases} \quad \text{ and } \quad 
%\label{eqn:patchdef} 
\lambda_{i,m}(\veg r) \coloneqq  \begin{cases}
1 & \text{for $\veg r = v_{i,m}$,} \\
0 & \text{for $\veg r = v_{i,p\ne m}$,} \\
\text{linear} & \text{elsewhere.}
\end{cases}
%\label{eqn:pyrdef}
\label{eqn:patchpyrdef}
\end{equation}
For the boundary element spaces spanned by these functions, $X_{\pi_i} \coloneq \text{span}\{\pi_{i,n}\}_{n=1}^{N_{C,i}}$ and $X_{\lambda_i} \coloneq \text{span}\{\lambda_{i,m}\}_{m=1}^{N_{V,i}}$, we have $X_{\pi_i} \subset H^{-1/2}(\Gamma_{h,i})$ , $X_{\lambda_i} \subset H^{1/2}(\Gamma_{h,i})$ \citep{steinbach2008numerical}.

Following the standard Galerkin procedure, we expand the unknowns of the system in \eqref{eqn:symmformsyst} as
\begin{equation}
V_i \approx \sum_{m=1}^{N_{V,i}}{l}_{i,m} \lambda_{i,m}
\quad\text{and}\quad
p_i \approx \sum_{n=1}^{N_{C,i}}{p}_{i,n} \pi_{i,n}
\label{eqn:pexp}
\end{equation}
and we test with pyramid and patch functions resulting in the linear system of equations
\begin{equation}
\mat Z \begin{pmatrix}
\vec l \\ \vec p
\end{pmatrix} = \begin{pmatrix}
\vec b \\ \vec c
\end{pmatrix}\,.
\label{eqn:symmform}
\end{equation}
The block matrix $\mat Z$ can be represented as
\begin{equation}
\mat Z = \begin{pmatrix}
\mat N &  \mat D^* \\
\mat D & \mat S
\end{pmatrix}\,,
\label{eqn:Zmatrix}
\end{equation}
where each block is another block matrix composed out of $N^2$ blocks, whose position inside $\{ \mat N,\mat D^*,\mat D,\mat S \}$ will be identified by a block row index and a block column index. By denoting the blocks of $\{ \mat N,\mat D^*,\mat D,\mat S \}$ in block row $x$ and block column $y$, for $x,y = 1,...,N$, with the superscript $^{xy}$, their definitions are
\begin{align}
\mat N^{xy} &\coloneqq \begin{cases}
(\sigma_x+\sigma_{x+1})\mat N_{xy} & \text{if $x=y$} \\
-\sigma_y \mat N_{xy} & \text{if $x=y-1$} \\
-\sigma_x \mat N_{xy} & \text{if $x=y+1$} \\
0\cdot \mat N_{xy} & \text{otherwise} \\
\end{cases}\,,\quad\quad\quad\quad
\mat D^{*xy} \coloneqq \begin{cases}
-2\mat D^*_{xy} & \text{if $x=y$} \\
\mat D^*_{xy} & \text{if $x=y\pm1$} \\
0\cdot \mat D^*_{xy} & \text{otherwise} \\
\end{cases}\,,\nonumber\\
\mat D^{xy} &\coloneqq \begin{cases}
-2\mat D_{xy} & \text{if $x=y$} \\
\mat D_{xy} & \text{if $x=y\pm1$} \\
0\cdot \mat D_{xy} & \text{otherwise} \\
\end{cases}\,,\quad\quad\quad\quad\quad\quad\quad\,\,\,\,\,\,\,
\mat S^{xy} \coloneqq \begin{cases}
(\sigma_x^{-1}+\sigma_{x+1}^{-1})\mat S_{xy} & \text{if $x=y$} \\
-\sigma_y^{-1} \mat S_{xy} & \text{if $x=y-1$} \\
-\sigma_x^{-1} \mat S_{xy} & \text{if $x=y+1$} \\
0\cdot \mat S_{xy} & \text{otherwise}
\end{cases}\,.\nonumber
\end{align}
The matrices introduced in the previous equations are defined as
\begin{align}
(\mat N_{xy})_{mn} &\coloneqq \left( \lambda_{x,m}, \mathcal{N}_{xy} \lambda_{y, n} \vphantom{\mathcal{D}^*_{xy}} \right)_{L^2(\Gamma_x)}\,,\\
(\mat D^*_{xy})_{mn} &\coloneqq \left( \lambda_{x,m}, \mathcal{D}^*_{xy} \pi_{y, n} \right)_{L^2(\Gamma_x)}\,,\\
(\mat D_{xy})_{mn} &\coloneqq \left( \pi_{x,m}, \mathcal{D}_{xy} \lambda_{y, n} \vphantom{\mathcal{D}^*_{xy}}\right)_{L^2(\Gamma_x)}\,,\\
(\mat S_{xy})_{mn} &\coloneqq \left( \pi_{x,m}, \mathcal{S}_{xy} \pi_{y, n} \vphantom{\mathcal{D}^*_{xy}}\right)_{L^2(\Gamma_x)}\,.
\end{align}
The blocks of the right-hand-side (RHS) vector in equation \eqref{eqn:symmform} can be written as
\begin{equation}
\vec b = \begin{pmatrix}
\vec b_1 \\ \vec b_2 \\ \vdots \\ \vec b_N
\end{pmatrix}\,, \quad 
\vec c = \begin{pmatrix}
\vec c_1 \\ \vec c_2 \\ \vdots \\ \vec c_N
\end{pmatrix},
\end{equation}
where
\begin{align}
(\vec b_i)_m &\coloneqq \left( \lambda_{i,m},(  \uppartial_{\veg n} v_{\Omega_{i+1}}-\uppartial_{\veg n} v_{\Omega_i})\right)_{L^2(\Gamma_i)}\,,\\
(\vec c_i)_n &\coloneqq \left( \pi_{i,n},(  \sigma_{i+1}^{-1}v_{\Omega_{i+1}}-\sigma_{i}^{-1}v_{\Omega_i})\right)_{L^2(\Gamma_i)}.
\end{align}
The blocks of the unknown vector in equation \eqref{eqn:symmform} can be written as
\begin{equation}
\vec l = \begin{pmatrix}
\vec l_1 \\ \vec l_2 \\ \vdots \\ \vec l_N
\end{pmatrix}\,, \quad 
\vec p = \begin{pmatrix}
\vec p_1 \\ \vec p_2 \\ \vdots \\ \vec p_N
\end{pmatrix},
\end{equation}
whose elements are simply $(\vec l_i)_m \coloneqq l_{i,m}$ and $(\vec p_i)_n \coloneqq p_{i,n}$.
The elements of $\vec l_i$ are the voltages at the vertices of $\Gamma_{h,i}$ due to the interpolatory nature of the basis functions.
As a final remark, the last block-row and the last block-column of the linear system in \eqref{eqn:symmform} need to be eliminated \citep{kybic2005common}, as the exterior conductivity $\sigma_{N+1}$ is null.

\subsection{Operators and dual basis functions needed for the new formulation}
\label{sec:further_notation}
This section establishes further operators and basis functions needed for the new formulation.
First, we barycentrically refine $\Gamma_{h,i}$ resulting in $\bar{\Gamma}_{h,i}$. Using $\bar{\Gamma}_{h,i}$ we obtain the dual mesh $\tilde{\Gamma}_{h,i}$, where the vertices of $\Gamma_{h,i}$ have become cells and the cells of $\Gamma_{h,i}$ have become vertices (see \Cref{fig:mesh}). 
On $\tilde{\Gamma}_{h,i}$, we define dual pyramid basis functions $\tilde{\lambda}_{i,n}$, which are attached to the dual vertices $\tilde{v}_{i,n}$, defined by
\begin{equation}
\tilde{\lambda}_{i,n}(\veg r) \coloneqq  \sum_{x=1}^7 \frac{1}{\mathit{NoC}(\bar{v}_{i,n,x})} \bar{\lambda}_{i,n,x}(\veg r),
\end{equation}
where $\bar{\lambda}_{i,n,x}$ is the pyramid function with domain on the barycentrically refined mesh $\bar{\Gamma}_{h,i}$, attached to the $x$th vertex of $\bar{\Gamma}_{h,i}$, $\{\bar{v}_{i,n,x}\}_{x=1}^7$, lying on the primal cell $c_{i,n}$, and the function $\mathit{NoC}(\bar{v}_{i,n,x})$ gives the number of primal cells connected to the vertex $\bar{v}_{i,n,x}$. A graphical representation of the dual pyramid basis function is shown in \Cref{subfig-2:dualPyr}.

\begin{figure}
\subfloat[\label{subfig-1:mesh_primal}]{%
  \includegraphics[width=0.3
\columnwidth]{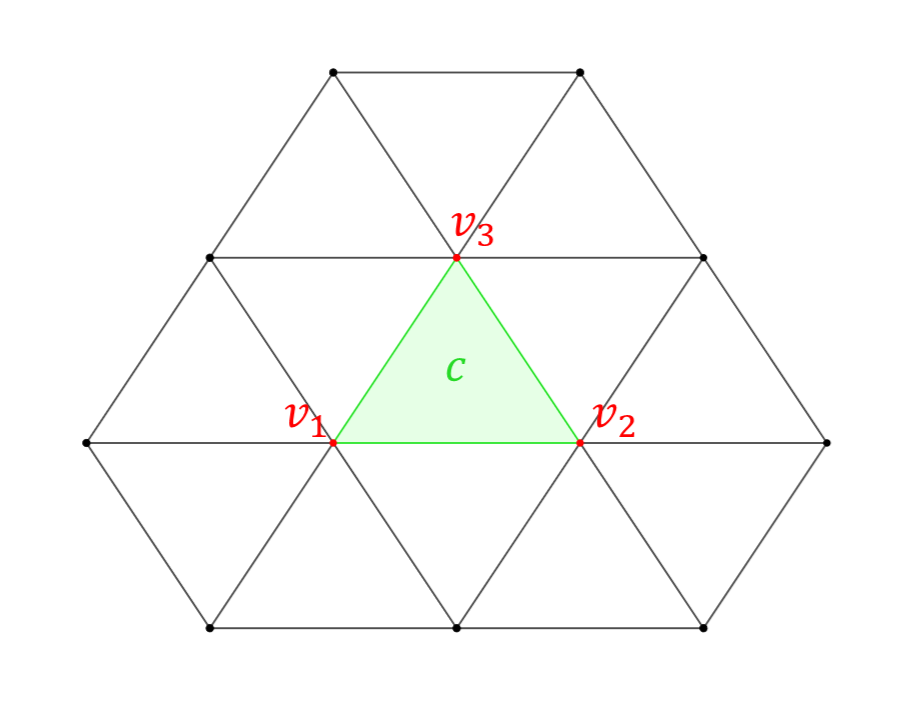}
}
\hfill
\subfloat[\label{subfig-2:mesh_ref}]{%
  \includegraphics[width=0.3\columnwidth]{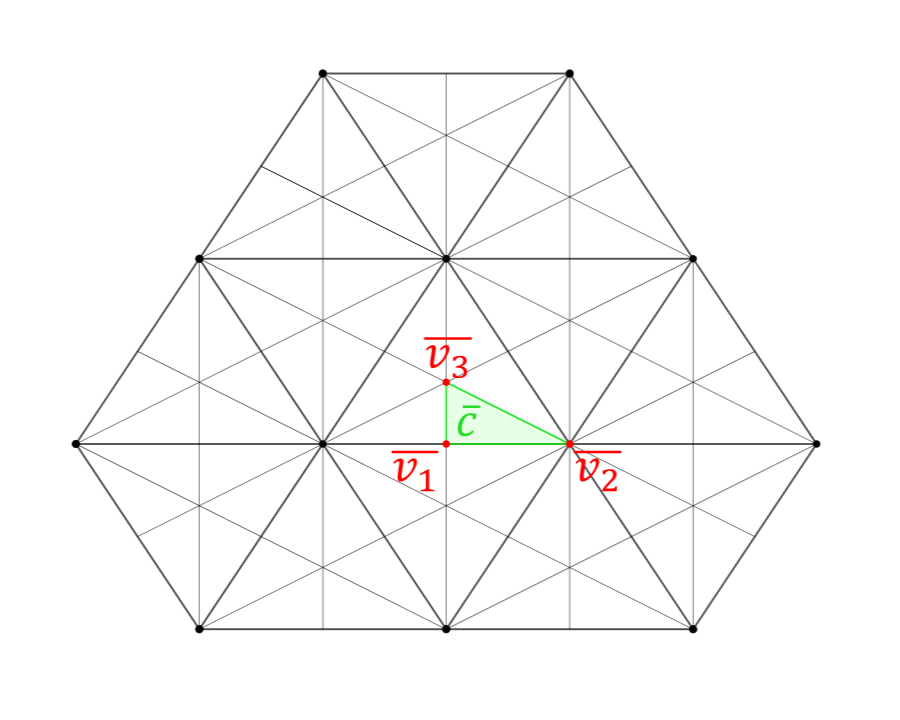}
}
\hfill
\subfloat[\label{subfig-3:mesh_dual}]{%
  \includegraphics[width=0.3\columnwidth]{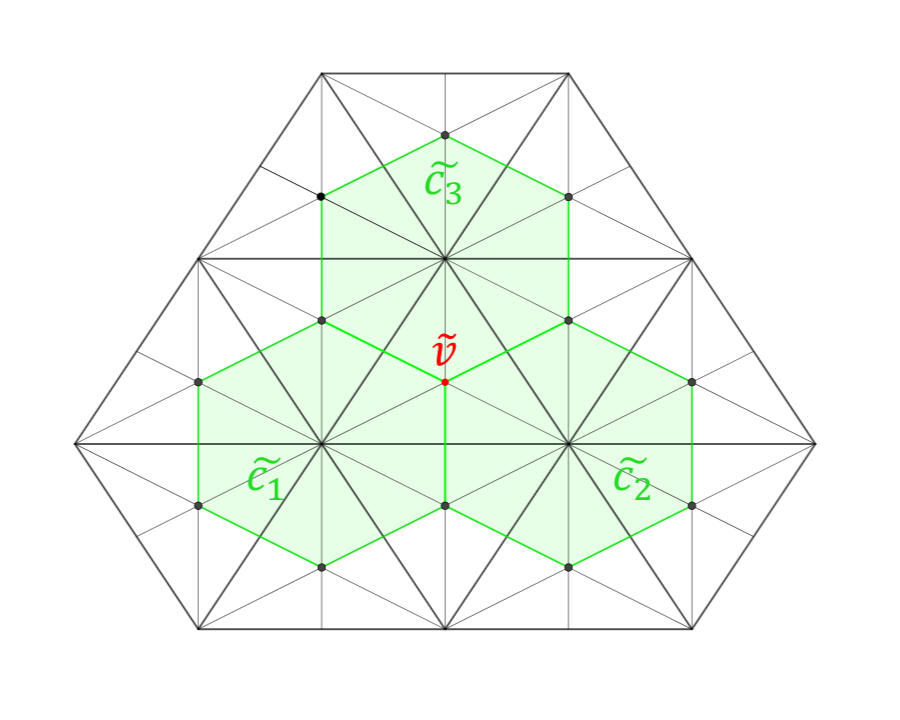}
}
\caption{(a) Primal, (b) barycentrically refined, and (c) dual mesh, denoted respectively as $\Gamma_{h,i}$, $\bar{\Gamma}_{h,i}$, and $\tilde{\Gamma}_{h,i}$. The vertices are in red, the cells are in green.}
\label{fig:mesh}
\end{figure}

\begin{figure}
\subfloat[\label{subfig-1:primPyr}]{%
  \includegraphics[width=0.3
\columnwidth]{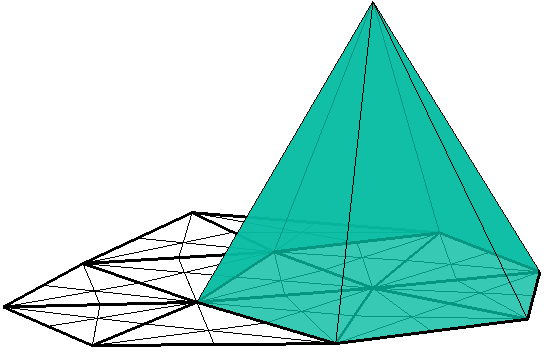}
}
\hfill
\subfloat[\label{subfig-2:dualPyr}]{%
  \includegraphics[width=0.3\columnwidth]{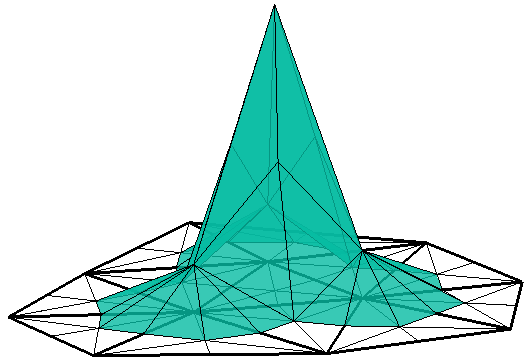}
}
\subfloat[\label{subfig-3:dualPyrDom}]{%
  \includegraphics[width=0.4\columnwidth]{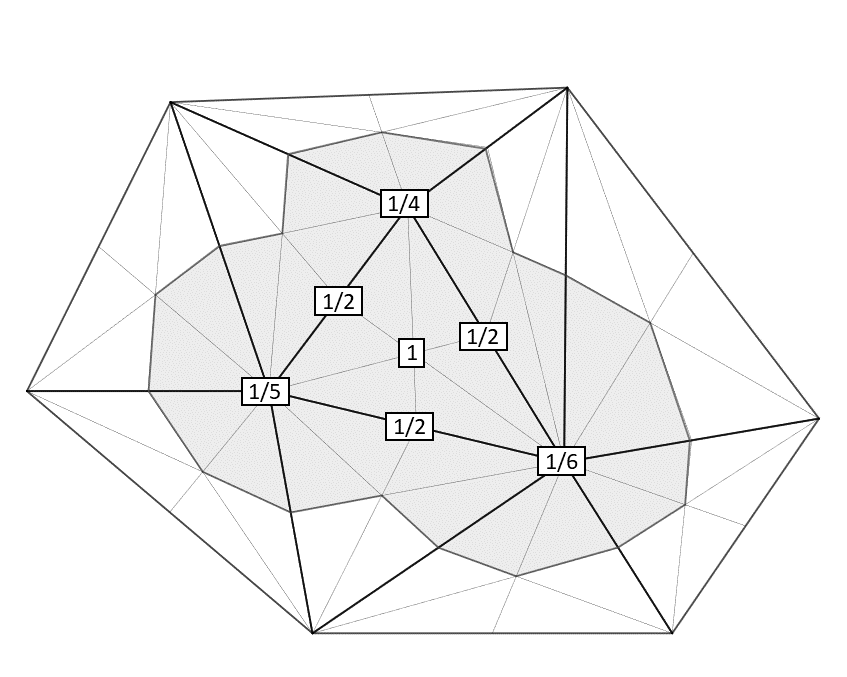}
}
\caption{(a) Primal and (b) dual pyramid basis function representation. (c) Dual pyramid basis function support: the numbers denote the value of the function at the point.}
\end{figure}

In the following, we will also need the identity $\mathcal{I}$ and the Laplace-Beltrami $\Deltaup_{\Gamma_i}$ operators. They can be discretized through their application to a given set of expansion functions and the testing with a set of test basis functions. For the identity operator, the outcome of this operation is usually named Gram matrix, denoted in this work by $\mat G_{i,fg}$, where $f,g \in \{\pi, \lambda, \tilde{\lambda}\}$ indicate the type of test and expansion functions employed in the discretization
\begin{equation}
(\mat G_{i,fg})_{mn} \coloneqq  \left( f_{i,m}(\veg r), \, g_{i,n} (\veg r) \right)_{L^2(\Gamma_i)}.
\end{equation}
The discretization of the Laplace-Beltrami operator by means of pyramid and dual pyramid functions leads to the matrices
\begin{align}
(\mat \Delta_i)_{mn} &\coloneqq  \left( \nabla_{\Gamma_i}\lambda_{i,m},\nabla_{\Gamma_i}\lambda_{i,n}  \right)_{L^2(\Gamma_i)} \, ,\\
(\tilde{\mat \Delta}_i)_{mn} &\coloneqq  \left( \nabla_{\Gamma_i}\tilde{\lambda}_{i,m},\nabla_{\Gamma_i}\tilde{\lambda}_{i,n}  \right)_{L^2(\Gamma_i)}\, ,
\end{align}
where $\nabla_{\Gamma_i}$ denotes the surface gradient operator \citep[Equation~4.200]{sauter2011boundary}.
The constant function along $\Gamma_i$, $\text{\textit{1}}(\veg r) \coloneqq 1$, is an eigensolution of the Laplace-Beltrami operator, that is $\Deltaup_{\Gamma_i} \text{\textit{1}} = 0$. Hence, the one-dimensional nullspace of the Laplace-Beltrami operator is spanned by the constant function, in symbols $\text{ker}\,\Deltaup_{\Gamma_i} = \text{span}\{\text{\textit{1}}\}$.
Due to our choice of basis and test functions, $\mat \Delta_i$ and $\tilde{\mat \Delta}_i$ have, likewise, a non-trivial nullspace given by $\bm{1}_{N_{V,i}}$ and $\bm{1}_{N_{C,i}}$, where $\bm{1}_n$ denotes the all-one vector in $\mathbb{R}^{n}$.

\section{Analysis of the conditioning of the symmetric formulation}
\label{sec:conditionAnalysis}
The ill-conditioning plaguing the symmetric formulation can be traced back to two distinct numerical effects: the dense-discretization breakdown (i.e., the increase of the condition number for $h \rightarrow 0$, where $h \coloneqq \min_i h_i$) and the high-contrast breakdown (i.e. the increase of the condition number for an increasing conductivity contrast between adjacent compartments).

The stability analysis proposed here aims at characterizing the discrete spectral behaviour of the formulation, in particular through the estimation of the condition number of $\mat Z$ \citep[Equation~2.1.25]{quarteroni2008numerical}, $\text{cond}(\mat Z)$, given by the ratio between the highest and the lowest singular values of $\mat Z$
which is known to impact, for most iterative solvers, both the speed of convergence and the achievable accuracy (in finite precision arithmetic) \citep{quarteroni2008numerical}.

\subsection{Dense-discretization behaviour}
\label{sec:condH}
In this section, we will study the stability properties of the discrete symmetric formulation when $h$ decreases. 
We base our analysis on the spherical harmonics decomposition of the operator; thus, we employ a spherical multi-compartment head model. We do not loose generality here, as realistic multi-compartment head models will have similar spectral properties since a smooth deformation of a geometry gives rise to a compact perturbation of the electrostatic operators under study \citep{sloan1992error, kolm2003quadruple}.
The assumption of a smooth deformation is uncritical as human heads typically do not have sharp edges and corners.

We consider an head model characterized by $N$ concentric, spherical boundaries $\{\Gamma_i\}_{i=1}^N$ with radius $\{R_i\}_{i=1}^N$. The operator under study 
\begin{equation}
\mathcal{Z} = \begin{pmatrix}
\mathcal{N}_b &  \mathcal{D}^*_b \\
\mathcal{D}_b & \mathcal{S}_b
\end{pmatrix}
\label{eqn:Zopmatrix}
\end{equation}
is a block integral operator, where each block operator is another block operator composed of $N^2$ blocks. By denoting the operator in block row $x$ and block column $y$ of $\{ \mathcal{N}_b,\mathcal{D}_b^*,\mathcal{D}_b,\mathcal{S}_b \}$, for $x,y = 1,...,N$, with the superscript $^{xy}$, we have
\begin{align}
\mathcal{N}^{xy}_b &\coloneqq  \begin{cases}
(\sigma_x+\sigma_{x+1})\mathcal{N}_{xy} & \text{if $x=y$} \\
-\sigma_y \mathcal{N}_{xy} & \text{if $x=y-1$} \\
-\sigma_x \mathcal{N}_{xy} & \text{if $x=y+1$} \\
0\cdot \mathcal{N}_{xy} & \text{otherwise} \\
\end{cases}\quad\quad\quad\quad
\mathcal{D}^{*xy}_b \coloneqq  \begin{cases}
-2\mathcal{D}^*_{xy} & \text{if $x=y$} \\
\mathcal{D}^*_{xy} & \text{if $x=y\pm1$} \\
0\cdot \mathcal{D}^*_{xy} & \text{otherwise} \\
\end{cases}\nonumber\\
\mathcal{D}^{xy}_b &\coloneqq  \begin{cases}
-2\mathcal{D}_{xy} & \text{if $x=y$} \\
\mathcal{D}_{xy} & \text{if $x=y\pm1$} \\
0\cdot \mathcal{D}_{xy} & \text{otherwise} \\
\end{cases}\quad\quad\quad\quad\quad\quad\quad\,\,\,\,\,\,\,
\mathcal{S}^{xy}_b \coloneqq  \begin{cases}
(\sigma_x^{-1}+\sigma_{x+1}^{-1})\mathcal{S}_{xy} & \text{if $x=y$} \\
-\sigma_y^{-1} \mathcal{S}_{xy} & \text{if $x=y-1$} \\
-\sigma_x^{-1} \mathcal{S}_{xy} & \text{if $x=y+1$} \\
0\cdot \mathcal{S}_{xy} & \text{otherwise}
\end{cases}.\nonumber
\end{align}
We begin our analysis by noting that due to the compactness of the double-layer operator $\mathcal{D}_{xy}$ \eqref{eqn:Dop} and its adjoint counterpart $\mathcal{D}^*_{xy}$ \eqref{eqn:Dsop} on smooth domains \citep{costabel1985direct}, the blocks $\mathcal{D}_b$, $\mathcal{D}^*_b$ are block operators, where each block is a compact operator; hence, $\mathcal{D}_b$, $\mathcal{D}^*_b$  are compact (see \ref{sec:proof_blockcompact} for a proof). Moreover, the single-layer $\mathcal{S}_{i,j\ne i}$ \eqref{eqn:Sop} and the hypersingular operators $\mathcal{N}_{i,j\ne i}$ \eqref{eqn:Nop} are compact as their kernels are continuous provided the analyticity of the Green's function evaluated in $\veg r$ far enough from $\veg r'$ \citep[Chapter~5.1.3]{sauter2011boundary}, \citep[Theorem~1.10]{colton2013integral}. Finally, the operator $\mathcal{Z}$ can be decomposed into the sum of a block operator involving only its diagonal blocks and another block operator, $\mathcal{K}_{\mathcal{Z}}$, containing the off-diagonal, compact contributions,
\begin{align}
\mathcal{Z} = \diag\big( &(\sigma_1+\sigma_2) \mathcal{N}_{11}, (\sigma_2+\sigma_3) \mathcal{N}_{22} ,...,(\sigma_N+\sigma_{N+1}) \mathcal{N}_{NN},\nonumber\\
&(\sigma_1^{-1}+\sigma_2^{-1}) \mathcal{S}_{11}, (\sigma_2^{-1}+\sigma_3^{-1}) \mathcal{S}_{22},...,(\sigma_N^{-1}+\sigma_{N+1}^{-1}) \mathcal{S}_{NN}\big)+\mathcal{K}_{\mathcal{Z}}\,.
\label{eqn:Zop_diag_offdiag}
\end{align}

The contribution $\mathcal{K}_{\mathcal{Z}}$, a block operator with compact blocks, is compact, as proved in \ref{sec:proof_blockcompact}.
Since its eigenvalues accumulate at zero when increasing the number of degrees of freedom of the problem \citep[Theorem~1.34]{colton2013integral}, the dominant spectral properties of $\mathcal{Z}$ are determined by the ones of the first, non-compact, operator in the summation \eqref{eqn:Zop_diag_offdiag}.
Therefore, we have to study the eigenvalues of the first term in \eqref{eqn:Zop_diag_offdiag}, given by the union of the sets of eigenvalues of each diagonal block.
We obtain those eigenvalues by noting that the spherical harmonic function $Y_{lm}$ of degree $l$ and order $m$ with $l \ge 0$ and $|m|<l$ (for a formal definition, see \citep[Definition~14.30.1]{olver2010nist}) is an eigenfunction of the single-layer and the hypersingular operator \citep{hsiao1994error}
\begin{align}
\mathcal{S}_{ii}Y_{lm}&=\uplambda_{l,S_{ii}} Y_{lm}\, ,\\
\mathcal{N}_{ii}Y_{lm}&=\uplambda_{l,N_{ii}}Y_{lm}\,,
\end{align}
where the eigenvalues of the previous equations are
\begin{align}
\uplambda_{l,\mathcal{S}_{ii}}&=\frac{R_i}{2l+1}\,,\quad\,\,\,\quad\,\,\,\,\,\quad \text{with multiplicity}\,\, 2l+1\\
\uplambda_{l,\mathcal{N}_{ii}}&=-\frac{l(l+1)}{R_i(2l+1)}\,,\quad\quad \text{with multiplicity}\,\, 2l+1\,.
\end{align}
Due to the definiteness property of the self-adjoint operators $\mathcal{S}_{ii}$, $\mathcal{N}_{ii}$, their singular values are simply obtained as $\upsigma_{l,\mathcal{S}_{ii}} = \uplambda_{l,\mathcal{S}_{ii}}$ and $\upsigma_{l,\mathcal{N}_{ii}} = -\uplambda_{l,\mathcal{N}_{ii}}$.

The resulting set of eigenvalues of $(\mathcal{Z}-\mathcal{K}_{\mathcal{Z}})$ reads
\begin{align}
\text{eig}(\mathcal{Z}-\mathcal{K}_{\mathcal{Z}}) =
&\{(\sigma_1+\sigma_2)\,\uplambda_{l,N_{11}},\,
(\sigma_2+\sigma_3)\,\uplambda_{l,N_{22}},\,...,\,
(\sigma_l+\sigma_{l+1})\,\uplambda_{l,N_{NN}}\}\,\cup\nonumber\\
&\{(\sigma_1^{-1}+\sigma_2^{-1})\,\uplambda_{l,S_{11}},\,
(\sigma_2^{-1}+\sigma_3^{-1})\,\uplambda_{l,S_{22}},\,...,\,
(\sigma_{N-1}^{-1}+\sigma_{N}^{-1})\,\uplambda_{l,S_{N-1,N-1}}\}\,.
\label{eqn:KmK_eig}
\end{align}
Hence, we can recognize different branches of eigenvalues of $(\mathcal{Z}-\mathcal{K}_{\mathcal{Z}})$: on one side, those associated to $\mathcal{N}_{ii}$, $(\sigma_i+\sigma_{i+1})\,\uplambda_{l,N_{ii}}$, diverging towards minus infinity with the order $l$ of the corresponding eigenfunction, and, on the other side, those associated to $\mathcal{S}_{ii}$, $(\sigma_i^{-1}+\sigma_{i+1}^{-1})\,\uplambda_{l,S_{ii}}$, converging toward $0$ for $l\rightarrow\infty$.

To obtain a statement on how the condition number grows in $h_i$, we note that the maximum degree $l$ supported by a mesh resolution $h_i$ behaves asymptotically as $l = \mathcal{O}(R_ih_i^{-1})$ \citep{adrian2021electromagnetic}. We then find
\begin{align}
\upsigma_{l,\mathcal{N}_{ii}} &= -\uplambda_{l,\mathcal{N}_{ii}} = \mathcal{O}(R_i^{-1}l) = \mathcal{O}(h_i^{-1})\,,\\
\upsigma_{l,\mathcal{S}_{ii}} &= \uplambda_{l,\mathcal{S}_{ii}} = \mathcal{O}(R_il^{-1}) = \mathcal{O}(h_i),
\end{align}
showing the ill-conditioned nature of the symmetric formulation and resulting in a condition number growth of its discretization by means of an $L^2$-orthonormal basis as $\mathcal{O}(h^{-2})$.

\subsection{High-contrast behaviour}
\label{sec:HCR_breakdown}

From the expression of the eigenvalues of $(\mathcal{Z}-\mathcal{K}_{\mathcal{Z}})$ on a spherical head model \eqref{eqn:KmK_eig}, another source of ill conditioning can be verified, related to the conductivity contrast between different compartments of the head domain $\Omega$.
We define the conductivity ratio between the adjacent compartments $\Omega_i$ and $\Omega_{i+1}$ as
\begin{equation}
\mathit{CR}_{i} \coloneqq  \frac{\max(\sigma_i, \sigma_{i+1})}{\min(\sigma_i, \sigma_{i+1})}\,,
\end{equation}
and can see that the condition number grows with increasing $\mathit{CR}_i$ by first considering that, asymptotically, $\mathit{CR}_{i}\rightarrow \infty$ corresponds either to the condition $(\sigma_i+\sigma_{i+1})\rightarrow \infty$ when $\max(\sigma_i, \sigma_{i+1}) \rightarrow \infty$, or to $(\sigma_i^{-1}+\sigma_{i+1}^{-1}) \rightarrow 0$ when $\min(\sigma_i, \sigma_{i+1}) \rightarrow 0$.
Then, the absolute difference between the branch of eigenvalues $(\sigma_i+\sigma_{i+1})\,\uplambda_{l,N_{ii}}$ and the branch $(\sigma_i^{-1}+\sigma_{i+1}^{-1})\,\uplambda_{l,S_{ii}}$ grows when increasing the parameter $\mathit{CR}_{i}$, leading to an increasing condition number. Therefore, we conclude that the instability of the symmetric formulation is worsened in presence of high-contrast models. The high-contrast behavior, together with the dense-discretization ill-conditioning outlined in the previous \Cref{sec:condH}, actually places a limit on the effective use of the symmetric formulation applied to realistic scenarios, where the head model is characterized by high-contrasts between the skull and the nearby tissues and the meshes are heavily refined, in order to capture small anatomical details since high condition numbers can lead to slowly or non-converging iterative solvers.

\section{The new formulation} \label{sec:formulation}
The preconditioning strategy proposed in this work makes the symmetric formulation immune to both the dense-discretization and the high-contrast breakdowns. Differently from standard Calder\'{o}n approaches, ours is obtained at continuous level through the product of three operators. To clarify the ideas, before introducing the complete formulation, we will consider the regularization of the single-layer operator only as a proof of concept.

\subsection{A proof of concept}
\label{sec:POC}
The single-layer operator $\mathcal{S}_{11}: H^{-1/2}(\Gamma_1) \rightarrow  H^{1/2}(\Gamma_1)$, where $\Gamma_1$ is smooth, is a pseudo-differential operator of order $-1$ \citep{steinbach1998construction}.  
From its boundedness and $H^{-1/2}-$ellipticity \citep[Equation~6.8]{ steinbach2008numerical}, it follows that, for any $f\in H^{1/2}(\Gamma_1)$, the solution of the equation $\mathcal{S}u=f$ is unique (Lax-Milgram theorem,  \citep[Theorem~3.4]{steinbach2008numerical}).
However, as can be seen from the spectral analysis in \Cref{sec:condH}, this integral equation is first-kind in nature and its discretization gives rise to an ill-conditioned system of linear equations.
As suggested in \citep{steinbach1998construction}, a preconditioning operator for $\mathcal{S}$ can be constructed by explicitly evaluating the inverse of the principal symbol of $\mathcal{S}$ and by finding its associated self-adjoint, elliptic preconditioning operator, of pseudodifferential order $1$. This strategy, better known as Calder\'{o}n preconditioning, %
%as far as the regularization of $\mathcal{S}$ is concerned, 
has a theoretical basis in the Calder\'{o}n identity \citep[Equation~3.1.45]{nedelec2001acoustic}
\begin{equation}
-\mathcal{N}\mathcal{S} = \frac{\mathcal{I}}{4}-\mathcal{D}^{*2},
\end{equation}
stating that the product of the hypersingular and the single-layer operator gives rise to a second kind operator.

Other possibilities exist, however.
We consider for example the operator $\mathcal{S}_{11}\Deltaup_{\Gamma_1} \mathcal{S}_{11}$, where the Laplace-Beltrami operator $\Deltaup_{\Gamma_1}: H^{\alpha}(\Gamma_1) \rightarrow  H^{\alpha-2}(\Gamma_1)$ is a pseudo-differential operator of order $2$. The product $\mathcal{S}_{11}\Deltaup_{\Gamma_1}: H^{-1/2}(\Gamma_1) \rightarrow  H^{-3/2}(\Gamma_1)$ satisfies the condition of being of order $1$, so it could give rise to a suitable left preconditioner for $\mathcal{S}_{11}$.
The favorable conditioning properties of $\mathcal{S}_{11}\Deltaup_{\Gamma_1} \mathcal{S}_{11}$ have been shown in \citep{oneil2018secondkind}, where, by explicit expansion of the product, $\mathcal{S}_{11}\Deltaup_{\Gamma_1}\mathcal{S}_{11}$ has been proved to be a second-kind integral operator, with its spectrum accumulating at $1/4$,
\begin{equation}
\mathcal{S}_{11}\Deltaup_{\Gamma_1}\mathcal{S}_{11} = \frac{\mathcal{I}}{4} + \mathcal{K}_{\mathcal{S}_{11}\Deltaup_{\Gamma_1}\mathcal{S}_{11}}\, ,
\end{equation}
where $\mathcal{K}_{\mathcal{S}_{11}\Deltaup_{\Gamma_1}\mathcal{S}_{11}}$ is compact.
It follows that the matrix $\mat G_{1,\pi\pi}^{-1/2} \mat S_{11} \mat G_{1,\tilde{\lambda}\pi}^{-1} \tilde{\mat \Delta}_1 \mat G_{1,\tilde{\lambda}\pi}^{-1}\mat S_{11} \mat G_{1,\pi\pi}^{-1/2}$, discretizing the second kind operator $\mathcal{S}\Deltaup_{\Gamma_1}\mathcal{S}$ by means of an orthonormal set of basis functions, is well-conditioned up to its nullspace and its spectrum accumulates at the point $1/4$.

However, the discrete preconditioning presented above is in general not allowed, because of the one-dimensional nullspace of $\tilde{\mat \Delta}_1$, which is spanned by the all-one vector. Indeed, the application of a singular preconditioner to a non-singular system of linear equations causes a loss of information which prevents recovering the solution of the original problem. To overcome this issue, following the approach presented in \citep{steinbach1998construction}, we introduce the operator $\hat{{\Deltaup}}_{\Gamma_i}: H^{\alpha}(\Gamma_i) \rightarrow  H^{\alpha-2}(\Gamma_i)$ defined by the bilinear form
\begin{equation}
\left( v,\hat{\Deltaup}_{\Gamma_i}w \right)_{L^2(\Gamma_i)} := \left( \nabla_{\Gamma_i}w,\nabla_{\Gamma_i}v \right)_{L^2(\Gamma_i)} + \left( \text{\textit{1}},w \right)_{L^2(\Gamma_i)}\left( \text{\textit{1}},v \right)_{L^2(\Gamma_i)}.
\end{equation}
The operator $\hat{{\Deltaup}}_{\Gamma_i}$ is invertible. Therefore, the unique solution of the problem
\begin{equation}
\hat{\Deltaup}_{\Gamma_i}w = g
\end{equation}
is also a solution of the ill-posed problem
\begin{equation}
{\Deltaup}_{\Gamma_i}w = g
\end{equation}
if $g$ satisfies the solvability condition $\int_{\Gamma_i}g \dd S = 0$.
The discretizations of $\hat{\Deltaup}_{\Gamma_i}$ by means of pyramid functions defined on $\Gamma_i$,
\begin{equation}
\hat{{\mat\Delta}}_i \coloneqq  \mat\Delta_i+\mat G_{i,\lambda\lambda}^\T \bm{{1}}_{N_{V,i}} \bm{1}_{N_{V,i}}^\T\mat G_{i,\lambda\lambda}\, ,
\end{equation}
and the one by means of dual pyramid functions defined on $\tilde{\Gamma}_i$,
\begin{equation}
\hat{\tilde{\mat\Delta}}_i \coloneqq  \tilde{\mat\Delta}_i+\mat G_{i,\tilde{\lambda}\tilde{\lambda}}^\T \bm{1}_{N_{C,i}} \bm{1}_{N_{C,i}}^\T\mat G_{i,\tilde{\lambda}\tilde{\lambda}}\, ,
\end{equation}
are non-singular matrices.
As in the case of $\mathcal{S}_{11}\Deltaup_{\Gamma_1} \mathcal{S}_{11}$, it can be shown that $\mathcal{S}_{11}\hat{\Deltaup}_{\Gamma_1} \mathcal{S}_{11}$ is a second-kind integral operator accumulating at $1/4$. Indeed, by expanding its weak form, following the steps in \citep{oneil2018secondkind}, one obtains
\begin{align}
(\mathcal{S}_{11}\hat{\Deltaup}_{\Gamma_1} \mathcal{S}_{11}v,w)_{L^2(\Gamma_1)} = \frac{1}{4}(v,w)_{L^2(\Gamma_1)} + (\mathcal{K}_{\mathcal{S}_{11}\Deltaup_{\Gamma_1}\mathcal{S}_{11}} v,w)_{L^2(\Gamma_1)}-(\text{\textit{1}},\mathcal{S}_{11}v)_{L^2(\Gamma_1)}(\text{\textit{1}},w)_{L^2(\Gamma_1)}.
\end{align}
Since the term $(\text{\textit{1}},\mathcal{S}_{11}v)_{L^2(\Gamma_1)}(\text{\textit{1}},w)_{L^2(\Gamma_1)}$ represents the bilinear form of a separable operator with finite dimensional range
, $\mathcal{S}_{11}\hat{\Deltaup}_{\Gamma_1} \mathcal{S}_{11}$ is a second-kind operator and its strong form can be written as
\begin{equation}
\mathcal{S}_{11}\hat{\Deltaup}_{\Gamma_1}\mathcal{S}_{11} = \frac{\mathcal{I}}{4} + \mathcal{K}_{\mathcal{S}_{11}\hat{\Deltaup}_{\Gamma_1}\mathcal{S}_{11}}\, ,
\end{equation}
where $\mathcal{K}_{\mathcal{S}_{11}\hat{\Deltaup}_{\Gamma_1}\mathcal{S}_{11}}$ is compact. Thus, given $f \in H^{1/2}(\Gamma_1)$ and the vector of coefficients of its linear expansion in patch basis function $\vec f$, the system of linear equations
\begin{equation}
\mat S_{11} \mat G_{1,\tilde{\lambda}\pi}^{-1} \hat{\tilde{\mat \Delta}}_1 \mat G_{1,\tilde{\lambda}\pi}^{-1}\mat S_{11} \vec x = \mat S_{11} \mat G_{1,\tilde{\lambda}\pi}^{-1} \hat{\tilde{\mat \Delta}}_1 \mat G_{1,\tilde{\lambda}\pi}^{-1} \vec f
\end{equation}
is non-singular and well-conditioned. Its unique solution coincides with the solution of the non-singular, but ill-conditioned, original problem $\mat S_{11} \vec x = \vec f$.

\subsection{A non-singular symmetric formulation}
As well-known from the literature \citep{kybic2005common}, when the conductivity in $\Omega_{N+1}$ (the exterior region) is zero, as is customary in EEG scenarios, the symmetric formulation is not well-posed due to a one-dimensional nullspace. This characteristic reflects the indeterminacy of the electrostatic potential, defined from the relation $\veg{E}=-\nabla V$ up to a constant \citep{kybic2005common}. Specifically, we find the one-dimensional, non-trivial nullspace of  matrix $\mat Z$ as the direction parallel to
\begin{equation}
\text{ker}(\mat Z) = \left[\bm{1}_{N_{V,1}}^\T \quad \bm{1}_{N_{V,2}}^\T
\quad ...
\quad \bm{1}_{N_{V,N}}^\T \quad \bm{0}_{N_{C,1}}^\T \quad \bm{0}_{N_{C,2}}^\T
\quad ...
\quad \bm{0}_{N_{C,N-1}}^\T
\right]^\T,
\end{equation}
following from
\begin{equation}
\begin{cases}
\mat N_{ij} \bm{1}_{N_{V,j}} &= \bm{0}_{N_{V,i}}\, ,\\
\mat D_{i-1,i} \bm{1}_{N_{V,i}} &= - \mat G_{\pi\pi,i-1} \bm{1}_{N_{C,i-1}} \, ,\\
\mat D_{ii} \bm{1}_{N_{V,i}} &= -\frac{1}{2}\,\mat G_{\pi\pi,i} \bm{1}_{N_{C,i}}\, , \\
\mat D_{i+1,i} \bm{1}_{N_{V,i}} &= \bm{0}_{N_{C,i+1}} \, ,
\end{cases}
\label{eqn:nullspace_system}
\end{equation}
which are the discrete counterparts of the eigenrelations
\begin{equation}
\begin{cases}
\mathcal{N}_{ij} \text{\textit{1}} &= 0\, \\
\mathcal{D}_{i-1,i} \text{\textit{1}} &= - \text{\textit{1}}\, , \\
\mathcal{D}_{ii} \text{\textit{1}} &= -\frac{1}{2} \text{\textit{1}}\, ,\\
\mathcal{D}_{i+1,i} \text{\textit{1}} &= 0\, .
\end{cases}
\label{eqn:nullspace_system_cont}
\end{equation}
The first relation of system \eqref{eqn:nullspace_system} identifies the non-trivial kernel of matrix $\mat N_{ij}$ discretizing the hypersingular operator applied to a function expanded in an interpolatory basis. 
The three last equations of \eqref{eqn:nullspace_system_cont} follow instead from a direct application of the representation theorem \citep[Theorem~3.1.1]{nedelec2001acoustic}. Their validity can be shown for example by applying the representation formula in \citep[Equation~3.1.7]{nedelec2001acoustic} to the scalar function $u(\veg r)$, defined as constant in the interior of $\Gamma_i$ and null elsewhere.

A deflation strategy is needed to obtain a non-singular problem (i.e., admitting unique solution), whose application is physically equivalent to fixing a reference for the evaluation of the potential. To the purpose of its implementation, similarly to what done in the previous section, we define the operator $\hat{\mathcal{N}}_{ii}:H^{1/2}(\Gamma_i)\rightarrow H^{-1/2}(\Gamma_i)$ \citep{steinbach1998construction} by the bilinear form
\begin{equation}
\big(\hat{\mathcal{N}}_{ii} v,w\big)_{L^2(\Gamma_i)} \coloneqq \big({\mathcal{N}}_{ii} v,w\big)_{L^2(\Gamma_i)} + \big(\text{\textit{1}},w\big)_{L^2(\Gamma_i)}\big(\text{\textit{1}},v\big)_{L^2(\Gamma_i)}
\end{equation}
for all $v,w\in H^{1/2}(\Gamma_i)$.
This modified hypersingular operator is bounded and $H^{1/2}-$elliptic \citep{steinbach2008numerical}. Moreover, the unique solution of
\begin{equation}
\hat{\mathcal{N}}_{ii} v = g
\end{equation}
is also a solution of
\begin{equation}
{\mathcal{N}}_{ii} v = g,
\end{equation}
provided that the solvability condition $\int_{\Gamma_i}g \dd S = 0$ is satisfied.
The discretization of $\hat{\mathcal{N}}_{ii}$ with pyramid functions as testing and expansion functions on $\Gamma_i$ gives rise to the invertible matrix
\begin{equation}
\hat{\mat N}_{ii} \coloneqq  {\mat N}_{ii} + \mat G_{i,\lambda\lambda}^\T \bm{1}_{N_{V,i}} \bm{1}_{N_{V,i}}^\T \mat G_{i,\lambda\lambda}.
\end{equation}
Among the infinite deflation choices available, one of them allows to retrieve the solution corresponding to a mean-value free potential on the exterior layer, a favorable choice in terms of compatibility with the most common measurement setups \citep{lei2017understanding} and based on theoretical justifications \citep{bertrand1985theoretical}. This is simply obtained by using $\hat{\mat N}_{NN}$ instead of $\mat N_{NN}$. Equivalently, by defining the vector
\begin{equation}
\vec \zeta \coloneqq  \left[\bm{0}_{N_{V,1}}^\T \quad \bm{0}_{N_{V,2}}^\T \quad ... \quad \mat G_{N,\lambda\lambda}^\T \bm{1}_{N_{V,N}} \quad \bm{0}_{N_{C,1}}^\T \quad ... \quad \bm{0}_{N_{C,N-1}}^\T \right]^\T,
\end{equation}
the unique solution of the linear system of equations
\begin{equation}
\hat{\mat Z}\begin{pmatrix}
\vec l \\ \vec p
\end{pmatrix} =
(\mat Z+\vec\zeta\vec\zeta^\T)\begin{pmatrix}
\vec l \\ \vec p
\end{pmatrix} = \begin{pmatrix}
\vec b \\ \vec c
\end{pmatrix}
\label{eqn:sys_def}
\end{equation}
is also a solution of
\begin{equation}
\mat Z\begin{pmatrix}
\vec l \\ \vec p
\end{pmatrix} = \begin{pmatrix}
\vec b \\ \vec c
\end{pmatrix}.
\label{eqn:sys_nondef}
\end{equation}
Moreover, $V_N$ is a mean-value free function, that is, we have
\begin{equation}
(\mat G_{N,\lambda\lambda}^\T \vec l_N)^\T \, \bm{1}_{N_{V,N}} = 0.
\label{eqn:def_cond}
\end{equation}
The above statements follow from the existence of a solution of equation \eqref{eqn:sys_nondef} satisfying condition \eqref{eqn:def_cond} and from the uniqueness of the solution of equation \eqref{eqn:sys_def}.
In the next section, we will define a preconditioning strategy for the well-posed formulation in \eqref{eqn:sys_def}.

\subsection{Our preconditioned, non-singular symmetric formulation}
\label{sec:newformulation}
The new formulation, resulting from the preconditioning of \eqref{eqn:sys_def}, for which we are going to prove the good conditioning properties, reads
\begin{comment}
\begin{equation}
\mat M \mat Q \mat R \hat{\mat Z} \mat R \mat Q \mat G_L \mat S \mat P \mat S \mat G_R \mat Q \mat R \hat{\mat Z} \mat R \mat Q \mat M \vec y = \mat M \mat Q \mat R \hat{\mat Z} \mat R \mat Q \mat G_L \mat S \mat P \mat S \mat G_R \mat Q \mat R \begin{pmatrix}
\vec b \\ \vec c
\end{pmatrix}.
\label{eqn:fullformulation}
\end{equation}
\end{comment}
\begin{equation}
\mat Z_\mr{p} \,\vec y\coloneqq
\mat M \, \mat Q\, \hat{\mat Z}\, \mat Q\, \mat G\, \mat P\, \mat G^\T\, \mat Q\, \hat{\mat Z}\, \mat Q\, \mat M\, \vec y = \mat M\, \mat Q\,  \hat{\mat Z}\,  \mat Q\, \mat G\,  \mat P\,  \mat G^\T\, \mat Q\, \begin{pmatrix}
\vec b \\ \vec c
\end{pmatrix}.
\label{eqn:fullformulation}
\end{equation}
The solution of the original problem \eqref{eqn:sys_def} is retrieved as
\begin{equation}
\begin{pmatrix}
\vec l \\ \vec p
\end{pmatrix} =  \mat Q\, \mat M\, \vec y.
\end{equation}
In the following, the matrices in \eqref{eqn:fullformulation} and a brief, intuitive explanation of their use in the formulation are introduced. The rigorous proof of the well conditioning of the formulation will be given in the next section.

The matrix $\mat Q$ is defined as
\begin{align}
\mat Q \coloneqq  \diag\bigg( &q_{V,1}\sqrt{R_1} \cdot \mat I_{N_{V,1},N_{V,1}}, q_{V,2}\sqrt{R_2} \cdot\mat I_{N_{V,2},N_{V,2}},...,q_{V,N}\sqrt{R_N} \cdot\mat I_{N_{V,N},N_{V,N}},\nonumber\\
&\frac{q_{C,1}}{\sqrt{R_1}} \cdot\mat I_{N_{C,1},N_{C,1}}, \frac{q_{C,2}}{\sqrt{R_2}} \cdot\mat I_{N_{C,2},N_{C,2}},...,\frac{q_{C,N-1}}{\sqrt{R_{N-1}}} \cdot\mat I_{N_{C,N-1},N_{C,N-1}}\bigg),
\end{align}
where $\mat I_{nm}\in\mathbb{R}^{n\times m}$ denotes the generalized (rectangular) identity matrix, i.e. $(\mat I_{nm})_{xy}=\delta_{xy}$, with $\delta_{xy}$ the Kronecker delta function.
The scalar coefficients $q_{V,i}$, $q_{C,i}$ are defined as \citep{ortizg.2018calderon}
\begin{align}
q_{V,i} &\coloneqq  \max(\sigma_i,\sigma_{i+1})^{-1/2}\\
q_{C,i} &\coloneqq  \min(\sigma_i,\sigma_{i+1})^{1/2}
\end{align}
and have been introduced in the formulation in order to cure the high-contrast breakdown identified in \Cref{sec:HCR_breakdown}. Indeed, as shown in \citep{ortizg.2018calderon}, the left and right multiplication of $\hat{\mat Z}$ by $\mat Q$ makes the symmetric formulation immune to this source of ill-conditioning.

If $\Gamma_{h,i}$ discretizes a spherical surface, then we define $R_i$ as the radius of the approximating sphere. Otherwise, $R_i$ represents half of the characteristic length of the inner volume delimited by $\Gamma_i$.

The matrix $\mat P$
\begin{equation}
\mat P \coloneqq  \diag\left( \frac{1}{R_1^2}\cdot \hat{{\mat\Delta}}_1^{-1}, \frac{1}{R_2^2}\cdot\hat{{\mat\Delta}}_2^{-1},...,\frac{1}{R_N^2}\cdot\hat{{\mat\Delta}}_N^{-1},
R_1^2\cdot\hat{\tilde{\mat\Delta}}_1, R_2^2\cdot\hat{\tilde{\mat\Delta}}_2,...,R_{N-1}^2\cdot\hat{\tilde{\mat\Delta}}_{N-1}\right)
\end{equation}
is the core of our preconditioning strategy.
The left multiplication of $\hat{\mat Z}$ by $\hat{\mat Z}$ and $\mat P$ provides the same preconditioning effect presented in \Cref{sec:POC} for the single-layer operator case, capable of overcoming the dense-discretization breakdown of the symmetric formulation.
The purpose of the introduction of the coefficients $R_i$ in the formulation is to make the spectra of the matrices $\mat Q\hat{\mat Z}\mat Q$ and $\mat P$ independent of the size of the geometry considered or, equivalently, independent of the unit of measure used for the definition of the mesh discretizing the head model.

Furthermore, we define the matrices
\begin{equation}
\mat G \coloneqq  \diag( \mat I_{N_{V,1},N_{V,1}}, \mat I_{N_{V,2},N_{V,2}},\dots, \mat I_{N_{V,N},N_{V,N}} \mat G_{1,\tilde{\lambda}\pi}^{-1}, \mat G_{2,\tilde{\lambda}\pi}^{-1},\dots,\mat G_{N-1,\tilde{\lambda}\pi}^{-1})
\end{equation}
and
\begin{equation}
\mat M \coloneqq  \diag( \mat G_{1,\lambda\lambda}^{-1/2}, \mat \mat G_{2,\lambda\lambda}^{-1/2},...,\mat G_{N,\lambda\lambda}^{-1/2}, \mat G_{1,\pi\pi}^{-1/2}, \mat G_{2,\pi\pi}^{-1/2},..., \mat \mat G_{N-1,\pi\pi}^{-1/2})\, .
\end{equation}
The left and right multiplication of the entire inner block $\mat Q \hat{\mat Z} \mat Q \mat G_L \mat P \mat G_R \mat Q \hat{\mat Z} \mat Q$ by $\mat M$ results in a matrix spectrally equivalent to the one discretizing the continuous formulation with a set of orthonormal basis functions.
One can be easily convinced of this by considering, for example, the simplifications leading to the equalities
\begin{equation}
\mat G_{1,\lambda\lambda}^{-1/2} \mat N_{11} \hat{{\mat\Delta}}_1^{-1} \mat N_{11} \mat G_{1,\lambda\lambda}^{-1/2} = \left(\mat G_{1,\lambda\lambda}^{-1/2} \mat N_{11} \mat G_{1,\lambda\lambda}^{-1/2}\right) \left(G_{1,\lambda\lambda}^{-1/2} \hat{{\mat\Delta}}_1 G_{1,\lambda\lambda}^{-1/2}\right)^{-1} \left(\mat G_{1,\lambda\lambda}^{-1/2} \mat N_{11} \mat G_{1,\lambda\lambda}^{-1/2}\right)
\end{equation}
and
\begin{multline}
\mat G_{1,\pi\pi}^{-1/2} \mat S_{11} \mat G_{1,\tilde{\lambda}\pi}^{-1} \hat{\tilde{\mat\Delta}}_1 \mat G_{1,\pi\tilde{\lambda}}^{-1} \mat S_{11} \mat G_{1,\pi\pi}^{-1/2} = \\ \left(\mat G_{1,\pi\pi}^{-1/2} \mat S_{11} \mat G_{1,\pi\pi}^{-1/2}\right) \left(\mat G_{1,\tilde{\lambda}\tilde{\lambda}}^{-1/2} \mat G_{1,\tilde{\lambda}\pi}G_{1,\pi\pi}^{-1/2}\right)^{-1}\left(\mat G_{1,\tilde{\lambda}\tilde{\lambda}}^{-1/2}\hat{\tilde{\mat\Delta}}_1 \mat G_{1,\tilde{\lambda}\tilde{\lambda}}^{-1/2}\right) \left(\mat G_{1,\tilde{\lambda}\tilde{\lambda}}^{-1/2} \mat G_{1,\tilde{\lambda}\pi}G_{1,\pi\pi}^{-1/2}\right)^{-1} \left(\mat G_{1,\pi\pi}^{-1/2} \mat S_{11} \mat G_{1,\pi\pi}^{-1/2}\right) 
\end{multline}
where the matrices
\begin{equation}
\left(\mat G_{1,\lambda\lambda}^{-1/2} \mat N_{11} \mat G_{1,\lambda\lambda}^{-1/2}\right),\quad \left(\mat G_{1,\pi\pi}^{-1/2} \mat S_{11} \mat G_{1,\pi\pi}^{-1/2}\right),\quad \left(\mat G_{1,\tilde{\lambda}\tilde{\lambda}}^{-1/2} \mat G_{1,\tilde{\lambda}\pi}G_{1,\pi\pi}^{-1/2}\right),\quad \left(G_{1,\lambda\lambda}^{-1/2} \hat{{\mat\Delta}}_1 G_{1,\lambda\lambda}^{-1/2}\right),\quad \text{and} \quad\left(\mat G_{1,\tilde{\lambda}\tilde{\lambda}}^{-1/2}\hat{\tilde{\mat\Delta}}_1 \mat G_{1,\tilde{\lambda}\tilde{\lambda}}^{-1/2}\right)
\nonumber
\end{equation}
are spectrally equivalent to the discretizations of the hypersingular operator, the single-layer operator, the identity and the modified Laplace-Beltrami operator in orthonormal bases.

It is worth noticing at this moment that, although the direct evaluation of matrix $\mat M$ could be expensive---indeed, it requires the square root decomposition of non-diagonal matrices, which can be expansive
%performed at best in linear complexity if iterative solvers are employed \citep{higham2008functions}
---it can easily be avoided. For example, by using a simple similarity transformation, we have \citep{shewchuk1994introduction}
\begin{equation}
\text{eig}\left(\mat M \mat Q \hat{\mat Z} \mat Q \mat G_L \mat P \mat G_R \mat Q \hat{\mat Z} \mat Q \mat M\right) = \text{eig}\left(\mat M \mat M \mat Q \hat{\mat Z} \mat Q \mat G \mat P \mat G^\T \mat Q  \hat{\mat Z} \mat Q\right)\,,
\end{equation}
where the symbol $\text{eig}(\mat Z)$ denotes the set of eigenvalues of $\mat Z$.
When the preconditioned conjugate gradient method \citep{shewchuk1994introduction} is employed to solve the system, the similarity transformation will not change the convergence behavior, thus, preserving the favorable convergence properties of the original system in \eqref{eqn:fullformulation}.

\begin{proposition}[]
The coefficient matrix of the proposed formulation \eqref{eqn:fullformulation} is symmetric, positive-definite.
\end{proposition}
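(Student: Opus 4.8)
\emph{Proof plan.} The plan is to exhibit $\mat Z_\mr{p}$ as a congruence transform of a positive-definite matrix. Setting $W\coloneqq \mat Q\,\hat{\mat Z}\,\mat Q\,\mat M$ and using the symmetry of $\mat M$, $\mat Q$ and $\hat{\mat Z}$ (to be verified), the defining identity \eqref{eqn:fullformulation} rewrites as $\mat Z_\mr{p} = W^\T\big(\mat G\,\mat P\,\mat G^\T\big)W$. It therefore suffices to establish three things: (a) $\mat M$, $\mat Q$, $\hat{\mat Z}$ and $\mat P$ are symmetric (whence $\mat Z_\mr{p}$ is symmetric and the factorization above is valid); (b) $W$ and $\mat G$ are invertible; (c) $\mat P$ is positive-definite. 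Granting these, for any $\vec y\neq \vec 0$ one has $\vec y^\T\mat Z_\mr{p}\,\vec y = \vec z^\T\mat P\,\vec z>0$ with $\vec z\coloneqq \mat G^\T W\vec y\neq \vec 0$, which is the claim.

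For (a) I would first note that $\mat Q$ is diagonal with strictly positive diagonal (positive conductivities and radii), hence symmetric and invertible, and that $\mat M$, $\mat P$ are block-diagonal with symmetric blocks: the mass matrices $\mat G_{i,\lambda\lambda}$, $\mat G_{i,\pi\pi}$ are symmetric positive-definite and so are their inverse square roots, while $\hat{\mat\Delta}_i$ and $\hat{\tilde{\mat\Delta}}_i$ are symmetric because they arise as $L^2$-pairings of a self-adjoint operator with a single family of test/trial functions. It then remains to see that $\mat Z$ is symmetric, so that $\hat{\mat Z}=\mat Z+\vec\zeta\vec\zeta^\T$ is too: the diagonal blocks $\mat N$ and $\mat S$ discretize the self-adjoint hypersingular and single-layer operators with coinciding test and trial spaces, and the off-diagonal blocks obey $(\mat D^*)^\T=\mat D$ since $\mathcal{D}^*_{ji}$ is the $L^2$-adjoint of $\mathcal{D}_{ij}$ and the conductivity weights in the definitions of $\mat D^{*xy}$ and $\mat D^{xy}$ are symmetric under $x\leftrightarrow y$; this is, after all, the reason for the name ``symmetric formulation'' \citep{kybic2005common}. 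Transposing \eqref{eqn:fullformulation} and invoking $\mat M^\T=\mat M$, $\mat Q^\T=\mat Q$, $\hat{\mat Z}^\T=\hat{\mat Z}$, $\mat P^\T=\mat P$ then gives $\mat Z_\mr{p}^\T=\mat Z_\mr{p}$.

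For (b), $\mat M$ and $\mat Q$ are invertible by (a); $\hat{\mat Z}$ is invertible because the deflation was designed precisely so that \eqref{eqn:sys_def} admits a unique solution --- concretely, $\mat Z$ is symmetric with one-dimensional kernel and $\vec\zeta$ has a nonzero component along it, the nonzero entry of $\vec\zeta$ pairing with $\bm{1}_{N_{V,N}}$ to produce $\bm{1}_{N_{V,N}}^\T\mat G_{N,\lambda\lambda}\bm{1}_{N_{V,N}}=|\Gamma_N|>0$. Hence $W=\mat Q\,\hat{\mat Z}\,\mat Q\,\mat M$ is invertible. Lastly $\mat G$ is block-diagonal with identity blocks and the square mixed Gram matrices $\mat G_{i,\tilde\lambda\pi}$; their invertibility is a standard fact about the primal/dual boundary-element pairing on which the scheme rests, and is in any event tacitly used throughout the construction of the preconditioner. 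For (c), since $R_i>0$ positive-definiteness of $\mat P$ reduces to that of $\hat{\mat\Delta}_i$ and $\hat{\tilde{\mat\Delta}}_i$; here I would strengthen the already-noted nonsingularity to positive-definiteness by evaluating the defining bilinear form on $v=w$, obtaining $\|\nabla_{\Gamma_i}w\|_{L^2(\Gamma_i)}^2+\big(\text{\textit{1}},w\big)_{L^2(\Gamma_i)}^2\ge 0$, which vanishes only if $\nabla_{\Gamma_i}w=0$ (so $w$ is constant) and $\big(\text{\textit{1}},w\big)_{L^2(\Gamma_i)}=0$ (so that constant is zero), i.e.\ only for $w=0$; the identical computation handles $\hat{\tilde{\mat\Delta}}_i$, the dual pyramids being a partition of unity as well.

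The main obstacle will be the bookkeeping behind the symmetry of the discrete operator $\mat Z$ --- one must carefully match the conductivity weights across the two off-diagonal block patterns and verify the block-level relation $(\mat D^*)^\T=\mat D$ --- together with the (minor) upgrade of $\hat{\mat\Delta}_i$, $\hat{\tilde{\mat\Delta}}_i$ from nonsingular to positive-definite; the invertibility of the mixed Gram matrices $\mat G_{i,\tilde\lambda\pi}$ is inherited from standard results on dual boundary-element spaces, and everything else is routine transpose/conformality checking.
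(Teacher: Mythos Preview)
Your proposal is correct and follows essentially the same route as the paper: both arguments exhibit $\mat Z_\mr{p}$ as a congruence transform of a positive-definite matrix, the paper writing $\mat Z_\mr{p}=B^\T B$ with $B=\mat P_{\text{sqrt}}\,\mat G^\T\mat Q\,\hat{\mat Z}\,\mat Q\,\mat M$ after factoring $\mat P=\mat P_{\text{sqrt}}^\T\mat P_{\text{sqrt}}$, while you keep the middle factor intact as $W^\T(\mat G\,\mat P\,\mat G^\T)W$. Your write-up is in fact more careful than the paper's, which simply asserts the symmetry/invertibility of $\mat M$, $\mat Q$, $\hat{\mat Z}$ and the symmetric positive-definiteness of $\mat P$ without the verifications you supply.
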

\begin{proof}
The matrix $\mat P$ can be written as $\mat P = \mat P_{\text{sqrt}}^\T \mat P_{\text{sqrt}}$ by virtue of its symmetric, positive-definiteness. Moreover, the matrices $\mat M$, $\mat Q$, $\mat Z$ are symmetric and invertible. Therefore,  we have the decomposition
\begin{equation}
\mat M \mat Q \hat{\mat Z} \mat Q \mat G \mat P  \mat G^\T \mat Q  \hat{\mat Z} \mat Q \mat M = \left( \mat P_{\text{sqrt}}  \mat G^\T \mat Q  \hat{\mat Z}  \mat Q \mat M \right)^\T \left( \mat P_{\text{sqrt}}  \mat G^\T \mat Q  \hat{\mat Z}  \mat Q \mat M \right)
\end{equation}
ensuring the symmetric, positive-definiteness of the proposed formulation.
\end{proof}

Before moving to the analysis of the conditioning properties of the new formulation, it is worth noting that the numerical scheme in \eqref{eqn:fullformulation} is refinement-free, that is, its implementation does not require the evaluation of integral operators on the dual mesh. Avoiding this operation sidesteps the computational burden of numerical integrations over the dual---and thus barycentrically refined mesh---leading to a significant advantage in terms of time required to build the formulation itself. In fact, in the proposed formulation \eqref{eqn:fullformulation} dual functions are only involved in the Gram matrices $(\mat G_{i,\tilde{\lambda}\pi})_{mn}=( \mat G_{i,\pi\tilde{\lambda}})_{nm} = \left( \tilde{\lambda}_{i,m}, \pi_{i,n} \right)_{L^2(\Gamma_i)}$ and in the Laplacian$(\tilde{\mat \Delta}_{i})_{mn} = \left( \nabla_{\Gamma_i}\tilde{\lambda}_{i,m}\nabla_{\Gamma_i}\tilde{\lambda}_{i,n}  \right)_{L^2(\Gamma_i)}$, for which we have, unlike the system matrices stemming from integral operators, analytic expressions allowing a rapid evaluation (see \citep{adrian2019refinementfree}) and \ref{sec:appendix_dualLap} for these expressions).

\subsection{Proof of well-conditioning}
\label{sec:well_cond}
In this section, we want to analyse the conditioning properties of the proposed formulation with respect to the two sources of instability identified in \cref{sec:conditionAnalysis}, that are dense-discretization and high-contrast.

For proving the high-contrast stability, we note that $\cond(\mat Q \hat{\mat Z} \mat Q) = \mathcal{O}(1)$ for $\mathit{CR}\coloneqq \max_i\left( \mathit{CR}_i \right)\rightarrow \infty$ directly follows from the discussion of the high-contrast spectral properties of $\mat Q \mat Z \mat Q$ in \citep{ortizg.2018calderon}. 
Then, we use the submultiplicativity of the condition number of matrix products
\begin{equation}
\text{cond}(\mat Z_\mr{p})\le\left(\text{cond}(\mat M)\right)^2\left(\text{cond}(\mat Q \hat{\mat Z} \mat Q)\right)^2\text{cond}(\mat G \mat P \mat G^\T)\, ,
\end{equation}
which follows from the submultiplicativity of the Euclidean norms in the definition of condition number employed, that is $\cond(\mat Z) = ||\mat Z||_2||\mat Z^{-1}||_2$ \citep[Equation~2.1.25]{quarteroni2008numerical}. Next, we study the limit $\lim_{\mathit{CR}\rightarrow \infty}$
\begin{equation}
\lim_{\mathit{CR}\rightarrow \infty}\text{cond}(\mat Z_\mr{p}) \le 
\lim_{\mathit{CR}\rightarrow \infty} 
\left(\text{cond}(\mat M)\right)^2
\left(\text{cond}(\mat Q \hat{\mat Z} \mat Q)\right)^2
\text{cond}(\mat G \mat P \mat G^\T).
\label{eqn:HClimit}
\end{equation}
Since the limits $\lim_{\mathit{CR}\rightarrow \infty}\text{cond}(\mat M)$, $\lim_{\mathit{CR}\rightarrow \infty}\text{cond}(\mat Q \hat{\mat Z} \mat Q)$, and $\lim_{\mathit{CR}\rightarrow \infty}\text{cond}(\mat G \mat P \mat G^\T)$ are finite, the resulting limit \eqref{eqn:HClimit}, given by the product of them \citep[Theorem~4.4]{rudin2008principles}, is finite, that is $\text{cond}(\mat Z_\mr{p}) = \mathcal{O}(1)$ as $\mathit{CR}\rightarrow \infty$, which concludes the proof of the high-contrast stability of our formulation.

For proving the dense-discretization stability of $\mat Z_\mr{p}$, we will leverage a spectral analysis of the matrix in \eqref{eqn:fullformulation}, first in the one-compartment case, then in the general $N-$compartment setup.
The study is held on a spherical multi-compartment model, but, as before (\Cref{sec:conditionAnalysis}), it can be extended to any geometry characterized by smooth boundaries without loss of generality.

\subsubsection{One-compartment case}
The matrix for which we are going to prove the dense-discretization stability reads
\begin{IEEEeqnarray}{rCl}
\mat Z_\mr{p} &=& 
\begin{pmatrix}
\alpha_1 \mat G_{1,\lambda\lambda}^{-1/2} {\hat{\mat N}}_{11}{\hat{\mat\Delta}}_1^{-1} {\hat{\mat N}}_{11}\mat G_{1,\lambda\lambda}^{-1/2} & 
\epsilon_1 \mat G_{1,\lambda\lambda}^{-1/2} {\hat{\Nm}}_{11}{\hat{\mat\Delta}}_1^{-1}{\Dxm}_{11} \mat G_{1,\pi\pi}^{-1/2} \nonumber \\
\epsilon_1 \mat G_{1,\pi\pi}^{-1/2}{\Dm}_{11}{\hat{\mat\Delta}}_1^{-1}{\hat{\Nm}}_{11}\mat G_{1,\lambda\lambda}^{-1/2} & \gamma_1\mat G_{1,\pi\pi}^{-1/2}{\Dm}_{11}{\hat{\mat\Delta}}_1^{-1}{\Dxm}_{11}\mat G_{1,\pi\pi}^{-1/2}
\end{pmatrix} \nonumber \\
&& +\> \begin{pmatrix}
\beta_1 \mat G_{1,\lambda\lambda}^{-1/2} {\Dxm}_{11}\mat G_{1,\tilde{\lambda}\pi}^{-1}{\hat{\tilde{\mat\Delta}}}_{1}\mat G_{1,\pi\tilde{\lambda}}^{-1}{\Dm}_{11} \mat G_{1,\lambda\lambda}^{-1/2} & 
\eta_1\mat G_{1,\lambda\lambda}^{-1/2}{\Dxm}_{11}\mat G_{1,\tilde{\lambda}\pi}^{-1}{\hat{\tilde{\mat\Delta}}}_{1}\mat G_{1,\pi\tilde{\lambda}}^{-1}{\Sm}_{11}\mat G_{1,\pi\pi}^{-1/2} \\
\eta_1 \mat G_{1,\pi\pi}^{-1/2}{\Sm}_{11}\mat G_{1,\tilde{\lambda}\pi}^{-1}{\hat{\tilde{\mat\Delta}}}_{1}\mat G_{1,\pi\tilde{\lambda}}^{-1}{\Dm}_{11}\mat G_{1,\lambda\lambda}^{-1/2} & 
\delta_1\mat G_{1,\pi\pi}^{-1/2}{\Sm}_{11}\mat G_{1,\tilde{\lambda}\pi}^{-1}{\hat{\tilde{\mat\Delta}}}_{1}\mat G_{1,\pi\tilde{\lambda}}^{-1}{\Sm}_{11}\mat G_{1,\pi\pi}^{-1/2}
\end{pmatrix}
\label{eqn:proof1_completeMatrix}
\end{IEEEeqnarray}
with the scalings
\begin{align}
\alpha_1 &\coloneqq  q_{V,1}^4(\sigma_1+\sigma_2)^2\,, \quad\quad\quad\quad\quad\quad\quad\quad\quad\,\,\, \,\,
\beta_1 \coloneqq  4R_1^2(q_{V,1}q_{C,1})^2\,,\nonumber\\
\epsilon_1 &\coloneqq  -\frac{2}{R_1}q_{V,1}^2(q_{V,1}q_{C,1})(\sigma_1+\sigma_2)\,,\quad\quad\quad\quad
\delta_1 \coloneqq  q_{C,1}^4(\sigma_1^{-1}+\sigma_2^{-1})^2\,,  
\nonumber\\
\gamma_1 &\coloneqq  \frac{4}{R_1^2}(q_{V,1}q_{C,1})^2\,, \quad\quad\quad\quad \quad\quad\quad\quad\quad\quad
\eta_1 \coloneqq  -2R_1q_{C,1}^2(q_{V,1}q_{C,1})(\sigma_1^{-1}+\sigma_2^{-1})\,.\nonumber.
\end{align}

For the reasons presented in \Cref{sec:POC}, the matrix $\mat G_{1,\pi\pi}^{-1/2}{\Sm}_{11}{\hat{\mat\Delta}}_{\pi,1}{\Sm}_{11}\mat G_{1,\pi\pi}^{-1/2}$, discretizing the second kind operator $\mathcal{S}_{11}\hat{\Deltaup}_{\Gamma_1}\mathcal{S}_{11} = \frac{\mathcal{I}}{4} + \mathcal{K}_{\mathcal{S}_{11}\hat{\Deltaup}_{\Gamma_1}\mathcal{S}_{11}}$, is well-conditioned and its spectrum accumulates at $1/4$.
An accurate study is needed to understand the nature of all the other blocks.

\begin{proposition}[]
The matrix $\mat G_{1,\lambda\lambda}^{-1/2} {\hat{\mat N}}_{11}{\hat{\mat\Delta}}_1^{-1} {\hat{\mat N}}_{11}\mat G_{1,\lambda\lambda}^{-1/2}$ is well-conditioned and its spectrum accumulates at $1/4$.
\end{proposition}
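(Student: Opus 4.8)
The plan is to replicate the proof of concept of \Cref{sec:POC}, with the hypersingular operator now playing, relative to $\hat{\Deltaup}_{\Gamma_1}^{-1}$, the role that the single-layer operator played relative to $\hat{\Deltaup}_{\Gamma_1}$. First I would recall that $\mathcal{N}_{11}$ --- and its deflated counterpart $\hat{\mathcal{N}}_{11}$, which differs from it by a rank-one, hence smoothing, term --- is a self-adjoint, $H^{1/2}$-elliptic pseudo-differential operator of order $+1$, whereas $\hat{\Deltaup}_{\Gamma_1}^{-1}\colon H^{-1/2}(\Gamma_1)\rightarrow H^{3/2}(\Gamma_1)$ is self-adjoint, invertible, and pseudo-differential of order $-2$. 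Hence $\hat{\mathcal{N}}_{11}\hat{\Deltaup}_{\Gamma_1}^{-1}\hat{\mathcal{N}}_{11}\colon H^{1/2}(\Gamma_1)\rightarrow H^{1/2}(\Gamma_1)$ is pseudo-differential of order $1-2+1=0$, with principal symbol $(-|\xi|/2)^{2}\,|\xi|^{-2}=1/4$; therefore
\begin{equation}
\hat{\mathcal{N}}_{11}\hat{\Deltaup}_{\Gamma_1}^{-1}\hat{\mathcal{N}}_{11}=\frac{\mathcal{I}}{4}+\mathcal{K}_{\hat{\mathcal{N}}_{11}\hat{\Deltaup}_{\Gamma_1}^{-1}\hat{\mathcal{N}}_{11}}\,,
\end{equation}
where the remainder has negative order and is thus compact on $H^{1/2}(\Gamma_1)$ by the compact Sobolev embedding (on a generic smooth $\Gamma_1$, by the smooth-deformation/compact-perturbation argument already used in \Cref{sec:condH}). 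To pin the accumulation point down I would carry out a weak-form expansion analogous to that in \citep{oneil2018secondkind}, exactly as was done for $\mathcal{S}_{11}\hat{\Deltaup}_{\Gamma_1}\mathcal{S}_{11}$ in \Cref{sec:POC}: the two rank-one corrections carried by the deflations contribute separable operators with finite-dimensional range and therefore do not alter the essential spectrum, so $\hat{\mathcal{N}}_{11}\hat{\Deltaup}_{\Gamma_1}^{-1}\hat{\mathcal{N}}_{11}$ is a second-kind operator whose spectrum accumulates at $1/4$ and is bounded away from $0$ and $\infty$.

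Next I would corroborate this on the spherical model, where an explicit computation is available. Since $\mathcal{N}_{11}Y_{lm}=\uplambda_{l,\mathcal{N}_{11}}Y_{lm}$ and $\Deltaup_{\Gamma_1}Y_{lm}=-\frac{l(l+1)}{R_1^2}Y_{lm}$, and the deflated operators agree with the undeflated ones on the $L^2$-orthogonal complement of the constant function, we have $\hat{\Deltaup}_{\Gamma_1}^{-1}Y_{lm}=\frac{R_1^2}{l(l+1)}Y_{lm}$ for $l\ge1$, so that
\begin{equation}
\hat{\mathcal{N}}_{11}\hat{\Deltaup}_{\Gamma_1}^{-1}\hat{\mathcal{N}}_{11}\,Y_{lm}=\uplambda_{l,\mathcal{N}_{11}}^{2}\,\frac{R_1^{2}}{l(l+1)}\,Y_{lm}=\frac{l(l+1)}{(2l+1)^{2}}\,Y_{lm}\,,\qquad l\ge1\,,
\end{equation}
a quantity tending to $1/4$ as $l\rightarrow\infty$, while the deflation contributes a single, strictly positive eigenvalue on the one-dimensional constant subspace; hence $1/4$ is the only accumulation point of $\text{eig}(\hat{\mathcal{N}}_{11}\hat{\Deltaup}_{\Gamma_1}^{-1}\hat{\mathcal{N}}_{11})$ and the spectrum lies in a fixed compact subset of $(0,\infty)$.

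Finally I would descend to the matrix of the statement. Inserting $\mat G_{1,\lambda\lambda}^{1/2}\mat G_{1,\lambda\lambda}^{-1/2}$ on either side of $\hat{\mat\Delta}_1^{-1}$, exactly as in the two identities displayed just before this proposition, gives
\begin{equation}
\mat G_{1,\lambda\lambda}^{-1/2}\hat{\mat N}_{11}\hat{\mat\Delta}_1^{-1}\hat{\mat N}_{11}\mat G_{1,\lambda\lambda}^{-1/2}=\mat A_{\mathcal{N}}\,\mat A_{\Delta}^{-1}\,\mat A_{\mathcal{N}}\,,\qquad \mat A_{\mathcal{N}}\coloneqq\mat G_{1,\lambda\lambda}^{-1/2}\hat{\mat N}_{11}\mat G_{1,\lambda\lambda}^{-1/2}\,,\quad \mat A_{\Delta}\coloneqq\mat G_{1,\lambda\lambda}^{-1/2}\hat{\mat\Delta}_1\mat G_{1,\lambda\lambda}^{-1/2}\,,
\end{equation}
where $\mat A_{\mathcal{N}}$ and $\mat A_{\Delta}$ are the Galerkin matrices of the $H^{1/2}$-elliptic operators $\hat{\mathcal{N}}_{11}$ and $\hat{\Deltaup}_{\Gamma_1}$ in an $L^2$-orthonormal basis of $X_{\lambda_1}$, hence symmetric positive-definite, and so is the product $\mat A_{\mathcal{N}}\mat A_{\Delta}^{-1}\mat A_{\mathcal{N}}$. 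Since the pyramid spaces $X_{\lambda_1}$ form a stable, asymptotically dense family of conforming subspaces of $H^{1/2}(\Gamma_1)$, the Galerkin projections converge strongly to the identity and $\mat A_{\mathcal{N}}\mat A_{\Delta}^{-1}\mat A_{\mathcal{N}}$ is a conforming discretization of $\hat{\mathcal{N}}_{11}\hat{\Deltaup}_{\Gamma_1}^{-1}\hat{\mathcal{N}}_{11}=\frac{\mathcal{I}}{4}+\mathcal{K}$ with $\mathcal{K}$ compact; arguing as for $\mathcal{S}_{11}\hat{\Deltaup}_{\Gamma_1}\mathcal{S}_{11}$ in \Cref{sec:POC} and in \citep{steinbach2008numerical,oneil2018secondkind,adrian2019refinementfree}, the compactness of the remainder forces the eigenvalues of the discrete operator to cluster at $1/4$, while the $H^{1/2}$-ellipticity of $\hat{\mathcal{N}}_{11}$ and $\hat{\Deltaup}_{\Gamma_1}$ together with the stability of the chosen basis bound the smallest eigenvalue away from $0$ uniformly in $h$ --- on the sphere both facts being immediate from the eigenvalues above --- which is the claimed well-conditioning together with the accumulation at $1/4$. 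I expect the main obstacle to lie in the continuous identity itself: showing that the deflated triple product is genuinely of the form $\frac{\mathcal{I}}{4}+(\text{compact})$, i.e.\ that composing with the \emph{inverse} Laplace--Beltrami operator preserves the second-kind structure and that the deflation terms stay finite-rank; once this weak-form computation is secured, the passage to the preconditioned matrix is routine and parallels \Cref{sec:POC}.
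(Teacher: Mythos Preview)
Your argument is correct, but it follows a genuinely different route from the paper's. The paper does not invoke principal symbols at all; instead it bootstraps from the already established second-kind identity $\mathcal{S}\hat{\Deltaup}_{\Gamma}\mathcal{S}=\mathcal{I}/4+\mathcal{K}$ by multiplying on the left by $\mathcal{S}\hat{\Deltaup}_{\Gamma}\mathcal{S}$ and using the Calder\'on relations $-\mathcal{S}\hat{\mathcal{N}}=\mathcal{I}/4+\mathcal{K}_{\mathcal{S}\hat{\mathcal{N}}}$, $-\hat{\mathcal{N}}\mathcal{S}=\mathcal{I}/4+\mathcal{K}_{\hat{\mathcal{N}}\mathcal{S}}$: one computes
\[
\mathcal{S}\hat{\Deltaup}_{\Gamma}\mathcal{S}\,\hat{\mathcal{N}}\hat{\Deltaup}_{\Gamma}^{-1}\hat{\mathcal{N}}
= -\mathcal{S}\hat{\Deltaup}_{\Gamma}\bigl(\tfrac{\mathcal{I}}{4}+\mathcal{K}_{\mathcal{S}\hat{\mathcal{N}}}\bigr)\hat{\Deltaup}_{\Gamma}^{-1}\hat{\mathcal{N}}
= \tfrac{\mathcal{I}}{16}+\text{compact},
\]
and then peels off the invertible second-kind factor $\mathcal{S}\hat{\Deltaup}_{\Gamma}\mathcal{S}$ to conclude $\hat{\mathcal{N}}\hat{\Deltaup}_{\Gamma}^{-1}\hat{\mathcal{N}}=\mathcal{I}/4+\text{compact}$. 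Your approach computes the principal symbol directly and then verifies on the sphere via the explicit eigenvalues $l(l+1)/(2l+1)^{2}\to1/4$; this is more self-contained (it does not rely on the proof-of-concept result for $\mathcal{S}\hat{\Deltaup}_{\Gamma}\mathcal{S}$) and yields the sharp spherical eigenvalues as a by-product, whereas the paper's argument stays purely at the operator-algebra level and avoids any symbol calculus. One small cosmetic point: with the paper's sign convention for $\hat{\Deltaup}_{\Gamma}$ (the positive bilinear form $(\nabla_\Gamma w,\nabla_\Gamma v)$), the eigenvalue is $+l(l+1)/R_1^{2}$, not $-l(l+1)/R_1^{2}$; your subsequent line $\hat{\Deltaup}_{\Gamma_1}^{-1}Y_{lm}=R_1^{2}/\bigl(l(l+1)\bigr)Y_{lm}$ and the final result are nonetheless correct.
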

\begin{proof}
From the Calder\'{o}n relations, by expanding the products $\mathcal{S}\hat{\mathcal{N}}$ and $\hat{\mathcal{N}}\mathcal{S}$, we obtain that
\begin{align}
-\mathcal{S}\hat{\mathcal{N}} &= \frac{\mathcal{I}}{4} + \mathcal{K}_{\mathcal{S}\hat{\mathcal{N}}}\\
-\hat{\mathcal{N}}\mathcal{S} &= \frac{\mathcal{I}}{4} + \mathcal{K}_{\hat{\mathcal{N}}\mathcal{S}},
\end{align}
where $\mathcal{K}_{\mathcal{S}\hat{\mathcal{N}}}$, $\mathcal{K}_{\hat{\mathcal{N}}\mathcal{S}}$ are compact operators. By exploiting these relations for simplifying the product $\mathcal{S}\hat{\Deltaup}_{\Gamma}\mathcal{S}\hat{\mathcal{N}}\hat{\Deltaup}_{\Gamma}^{-1}\hat{\mathcal{N}}$, it is found that $\hat{\mathcal{N}}\hat{\Deltaup}_{\Gamma}^{-1}\hat{\mathcal{N}}$ is a second kind operator with accumulation point at $1/4$. Indeed
\begin{align}
\mathcal{S}\hat{\Deltaup}_{\Gamma}\mathcal{S}\hat{\mathcal{N}}\hat{\Deltaup}_{\Gamma}^{-1}\hat{\mathcal{N}} &= -\mathcal{S}\hat{\Deltaup}_{\Gamma} \left(\frac{\mathcal{I}}{4} + \mathcal{K}_{\mathcal{S}\hat{\mathcal{N}}} \right)\hat{\Deltaup}_{\Gamma}^{-1}\hat{\mathcal{N}} \nonumber\\ &=-\frac{1}{4}\mathcal{S}\hat{\mathcal{N}}-\mathcal{S}\hat{\Deltaup}_{\Gamma}\mathcal{K}_{\mathcal{S}\hat{\mathcal{N}}}\hat{\Deltaup}_{\Gamma}^{-1}\hat{\mathcal{N}}
\nonumber\\ &= \phantom{-}\frac{\mathcal{I}}{16} + \frac{1}{4}\mathcal{K}_{\mathcal{S}\hat{\mathcal{N}}}-\mathcal{S}\hat{\Deltaup}_{\Gamma}\mathcal{K}_{\mathcal{S}\hat{\mathcal{N}}}\hat{\Deltaup}_{\Gamma}^{-1}\hat{\mathcal{N}},
\label{eqn:NLN_proof}
\end{align}
where $\mathcal{S}\hat{\Deltaup}_{\Gamma}\mathcal{K}_{\mathcal{S}\hat{\mathcal{N}}}\hat{\Deltaup}_{\Gamma}^{-1}\hat{\mathcal{N}}$ is compact as the product of compact and of bounded linear operators is compact \citep[Theorem~1.5]{colton2013integral}. Therefore, equation \eqref{eqn:NLN_proof} implies $\hat{\mathcal{N}}\hat{\Deltaup}_{\Gamma}^{-1}\hat{\mathcal{N}} = \mathcal{I}/4 + \mathcal{K}_{\hat{\mathcal{N}}\hat{\Deltaup}_{\Gamma}^{-1}\hat{\mathcal{N}}}$. The discretization of this second-kind operator with an orthonormal set of basis functions, as in $\mat G_{1,\lambda\lambda}^{-1/2} {\hat{\mat N}}_{11}{\hat{\mat\Delta}}_1^{-1} {\hat{\mat N}}_{11}\mat G_{1,\lambda\lambda}^{-1/2}$, results thus in a well-conditioned matrix with its eigenvalues accumulating at $1/4$. 
\end{proof}

From a spherical harmonic analysis held on a sphere of radius $R_1$, we recognize that the eigenvalues of the operator $\mathcal{D}^*\Deltaup_{\Gamma}\mathcal{D}$ are the product of the eigenvalues of $\mathcal{D}^*$, $\Deltaup_{\Gamma}$, and $\mathcal{D}$,
\begin{equation}
\frac{1}{R_1^2}\frac{1}{2(2n+1)}\,n(n+1)\,\frac{1}{2(2n+1)} = \frac{1}{R_1^2} \frac{n^2+n}{16 \,n^2+16 \,n +4}\,,
\label{eqn:DsLtrum}
\end{equation}
since the operators $\mathcal{D}^*$, $\Deltaup_{\Gamma}$, and $\mathcal{D}$ have the same eigenvectors \citep{hsiao1994error, darbas2006generalized}.
Therefore, by analyzing the limit of \eqref{eqn:DsLtrum} for $n\rightarrow \infty$, it is possible to state that $\mathcal{D}^*\Deltaup_{\Gamma}\mathcal{D}$ is a second-kind operator with accumulation point $1/(16\,R_1^2)$. Hence, since the modified operator $\hat{\Deltaup}_{\Gamma}$ provides similar spectral properties to $\Deltaup_{\Gamma}$, the discretized form $R_1^2\,\mat G_{1,\lambda\lambda}^{-1/2} {\Dxm}_{11}\mat G_{1,\tilde{\lambda}\pi}^{-1}{\hat{\tilde{\mat\Delta}}}_{1}\mat G_{1,\pi\tilde{\lambda}}^{-1}{\Dm}_{11} \mat G_{1,\lambda\lambda}^{-1/2}$ on spherical geometries is well-conditioned and its spectrum accumulates at $1/16$.

Similarly, the expression of the eigenvalues of the two operators $\mathcal{D}^*\Deltaup_{\Gamma}\mathcal{S}$ and $\mathcal{S}\Deltaup_{\Gamma}\mathcal{D}$ is given by
\begin{equation}
-\frac{1}{R_1}\frac{1}{2(2n+1)}\,n(n+1)\,\frac{1}{2n+1} = -\frac{1}{R_1} \frac{n^2+n}{8 \,n^2+8 \,n +2}\,,
\label{eqn:DsLStrum}
\end{equation}
from which we deduce that $\mathcal{D}^*\Deltaup_{\Gamma}\mathcal{S}$ and $\mathcal{S}\Deltaup_{\Gamma}\mathcal{D}$ are second-kind operators with eigenvalues accumulating at $-1/(8\,R_1)$ and the matrices $R_1\,\mat G_{1,\lambda\lambda}^{-1/2}{\Dxm}_{11}\mat G_{1,\tilde{\lambda}\pi}^{-1}{\hat{\tilde{\mat\Delta}}}_{1}\mat G_{1,\pi\tilde{\lambda}}^{-1}{\Sm}_{11}\mat G_{1,\pi\pi}^{-1/2}$ and $R_1\,\mat G_{1,\pi\pi}^{-1/2}{\Sm}_{11}\mat G_{1,\tilde{\lambda}\pi}^{-1}{\hat{\tilde{\mat\Delta}}}_{1}\mat G_{1,\pi\tilde{\lambda}}^{-1}{\Dm}_{11}\mat G_{1,\lambda\lambda}^{-1/2}$ discretizing the operators on a sphere are well-conditioned with spectra accumulating at $1/8$.
%\end{proof}
Since a smooth variation from a spherical geometry only results in a compact perturbation \citep{kolm2003quadruple, sloan1992error}, the results mentioned above hold true also for non-spherical, smooth geometries, such as the ones considered in this work.

\begin{proposition}[]\label{prop_compactnessDLD}
The matrices $\mat G_{1,\lambda\lambda}^{-1/2} {\hat{\Nm}}_{11}{\hat{\mat\Delta}}_1^{-1}{\Dxm}_{11} \mat G_{1,\pi\pi}^{-1/2}$, $\mat G_{1,\pi\pi}^{-1/2}{\Dm}_{11}{\hat{\mat\Delta}}_1^{-1}{\hat{\Nm}}_{11}\mat G_{1,\lambda\lambda}^{-1/2}$, and $\mat G_{1,\pi\pi}^{-1/2}{\Dm}_{11}{\hat{\mat\Delta}}_1^{-1}{\Dxm}_{11}\mat G_{1,\pi\pi}^{-1/2}$ discretize compact operators, that is, their spectra accumulate at $0$.
\end{proposition}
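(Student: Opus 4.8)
\emph{Proof plan.} Up to the Gram-matrix factors $\mat G_{1,\lambda\lambda}^{-1/2}$, $\mat G_{1,\pi\pi}^{-1/2}$, $\mat G_{1,\tilde{\lambda}\pi}^{-1}$ and the mixed-Gram content hidden inside $\hat{\mat\Delta}_1^{-1}$ --- whose only role, exactly as in the chain of identities displayed right after the definition of $\mat M$ in \Cref{sec:newformulation}, is to realize a discretization in $L^2$-orthonormal bases --- the three matrices in the statement are (Petrov--)Galerkin discretizations of the continuous operators
\[
\hat{\mathcal{N}}_{11}\,\hat{\Deltaup}_{\Gamma_1}^{-1}\,\mathcal{D}^*_{11},\qquad
\mathcal{D}_{11}\,\hat{\Deltaup}_{\Gamma_1}^{-1}\,\hat{\mathcal{N}}_{11},\qquad
\mathcal{D}_{11}\,\hat{\Deltaup}_{\Gamma_1}^{-1}\,\mathcal{D}^*_{11}.
\]
Hence the plan is: first prove that each of these three operators is compact, and then conclude from the fact that a compact operator has singular values accumulating at $0$ \citep[Theorem~1.34]{colton2013integral}, a property its discretization in an orthonormal basis inherits as the number of degrees of freedom grows, exactly as used throughout \Cref{sec:conditionAnalysis}.

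The compactness of the three operators is the heart of the argument, and it is short. The operator $\hat{\Deltaup}_{\Gamma_1}^{-1}$ is \emph{bounded}: it is the inverse of the invertible, elliptic, order-$2$ operator $\hat{\Deltaup}_{\Gamma_1}$, so it gains two orders of Sobolev regularity. The modified hypersingular operator $\hat{\mathcal{N}}_{11}:H^{1/2}(\Gamma_1)\to H^{-1/2}(\Gamma_1)$ is bounded, differing from $\mathcal{N}_{11}$ only by a rank-one term. Finally, on the smooth boundary $\Gamma_1$ both the double-layer operator $\mathcal{D}_{11}$ and its adjoint $\mathcal{D}^*_{11}$ are \emph{compact} \citep{costabel1985direct} --- precisely the fact already invoked in \Cref{sec:condH}. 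Therefore each of the three products is a composition of bounded linear operators containing at least one compact factor, hence compact, by \citep[Theorem~1.5]{colton2013integral}. Equivalently, the three compositions have pseudodifferential orders $+1-2-1=-2$, $-1-2+1=-2$ and $-1-2-1=-4$, all negative, so each operator is regularizing and therefore compact on the compact manifold $\Gamma_1$. It is worth stressing that the \emph{inverse} $\hat{\Deltaup}_{\Gamma_1}^{-1}$ is what makes this work: the superficially similar blocks of the second matrix in \eqref{eqn:proof1_completeMatrix}, which sandwich the \emph{unbounded}, order-$2$ Laplace--Beltrami operator between two compact factors, are of order $0$ and genuinely second-kind, as the spherical-harmonic computation \eqref{eqn:DsLtrum} shows.

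I expect the only delicate point to be the bookkeeping behind the first sentence: one has to check that $\mat G_{1,\tilde{\lambda}\pi}^{-1}$, the $\mat G^{-1/2}$ factors and the Gram-type content of $\hat{\mat\Delta}_1^{-1}$ really do assemble the continuous compositions above in an $L^2$-orthonormal frame, so that the listed matrices are spectrally equivalent to orthonormal-basis discretizations of compact operators; this is entirely analogous to the simplifications already written out for the $(1,1)$ blocks in \Cref{sec:newformulation}, so no new idea is needed. As a final remark, since for a realistic head model $\Gamma_1$ is a smooth deformation of a sphere, the electrostatic operators --- in particular $\mathcal{D}_{11}$ and $\mathcal{D}^*_{11}$ --- differ from their spherical counterparts only by compact perturbations \citep{sloan1992error,kolm2003quadruple}, so the conclusion carries over verbatim to non-spherical geometries and no further estimates are required.
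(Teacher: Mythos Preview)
Your argument is correct and rests on the same key fact as the paper's --- the compactness of $\mathcal{D}_{11}$ and $\mathcal{D}^*_{11}$ on smooth domains, together with boundedness of the remaining factors, so that \citep[Theorem~1.5]{colton2013integral} applies. The paper takes a slightly less direct route: instead of invoking the boundedness of $\hat{\mathcal{N}}\hat{\Deltaup}_{\Gamma_1}^{-1}$ (or $\hat{\Deltaup}_{\Gamma_1}^{-1}\hat{\mathcal{N}}$) outright, it inserts $\hat{\mathcal{N}}\hat{\mathcal{N}}^{-1}=\mathcal{I}$ and writes, for example, $\hat{\mathcal{N}}\hat{\Deltaup}_{\Gamma}^{-1}\mathcal{D}^* = (\hat{\mathcal{N}}\hat{\Deltaup}_{\Gamma}^{-1}\hat{\mathcal{N}})(\hat{\mathcal{N}}^{-1}\mathcal{D}^*)$, so that the first factor is precisely the second-kind operator established in the preceding proposition and the second factor is compact. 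This buys the paper the convenience of staying entirely within the mapping properties $H^{1/2}\leftrightarrow H^{-1/2}$ already declared for $\hat{\mathcal{N}}$, whereas your direct argument implicitly uses that $\hat{\mathcal{N}}$ is bounded on $H^{3/2}$ as well (true, since it is order~$1$, but not stated in the paper). Your pseudodifferential-order bookkeeping ($-2$, $-2$, $-4$) is a clean alternative justification that sidesteps this issue altogether.
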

\begin{proof}
The operators $\hat{\mathcal{N}}\hat{\Deltaup}_{\Gamma}^{-1}\mathcal{D}^*$, $\mathcal{D}\hat{\Deltaup}_{\Gamma}^{-1}\hat{\mathcal{N}}$, and $\mathcal{D}\hat{\Deltaup}_{\Gamma}^{-1}\mathcal{D}^*$ are compact, as they can be written as the product of second kind and compact operators \citep[Theorem~1.5]{colton2013integral}, as
\begin{align}
\hat{\mathcal{N}}\hat{\Deltaup}_{\Gamma}^{-1}\mathcal{D}^* &= \hat{\mathcal{N}}\hat{\Deltaup}_{\Gamma}^{-1}\hat{\mathcal{N}}\hat{\mathcal{N}}^{-1}\mathcal{D}^*,\nonumber\\
\mathcal{D}\hat{\Deltaup}_{\Gamma}^{-1}\hat{\mathcal{N}} &= \mathcal{D}\hat{\mathcal{N}}^{-1}\hat{\mathcal{N}}\hat{\Deltaup}_{\Gamma}^{-1}\hat{\mathcal{N}},\nonumber\\
\mathcal{D}\hat{\Deltaup}_{\Gamma}^{-1}\mathcal{D}^* &= \mathcal{D}\hat{\mathcal{N}}^{-1}\hat{\mathcal{N}}\hat{\Deltaup}_{\Gamma}^{-1}\hat{\mathcal{N}} \hat{\mathcal{N}}^{-1}\mathcal{D}^*,\nonumber
\end{align}
where $\hat{\mathcal{N}}^{-1}\mathcal{D}^*$ and $\mathcal{D}\hat{\mathcal{N}}^{-1}$ are compact operators, composed out of bounded operators, where at least one is compact \citep[Theorem~1.5]{colton2013integral}.
Therefore, their discretization results in ill-conditioned matrices with vanishing spectra.
\end{proof}

Given the considerations above, matrix \eqref{eqn:proof1_completeMatrix} can be written as
\begin{equation}
\begin{pmatrix}
(\alpha_1 + \beta_1/4)/4\,\mat I_{N_{V,1},N_{V,1}} & 
\eta_1/8\,\mat I_{N_{V,1},N_{C,1}} \\
\eta_1/8\,\mat I_{N_{C,1},N_{V,1}} & \delta_1/4\,\mat I_{N_{C,1},N_{C,1}}
\end{pmatrix} + \begin{pmatrix}
\mat K_{N_{V,1},N_{V,1}} & \mat K_{N_{V,1},N_{C,1}} \\
\mat K_{N_{C,1},N_{V,1}} & \mat K_{N_{C,1},N_{C,1}}
\end{pmatrix},
\label{eqn:proof_1}
\end{equation}
where $\mat K_{m,n} \in \mathbb{R}^{m\times n}$ represents the discretization of a compact operator.
As is clear from their expression, $\alpha_1$ and $\beta_1$ are positive scalar coefficients, so that the principal part of the top-left block of \eqref{eqn:proof1_completeMatrix} cannot be canceled.
The second term in the summation \eqref{eqn:proof_1} is the discretization of a block operator with compact blocks, and thus is compact (proof in \ref{sec:proof_blockcompact}), with singular values accumulating at zero when increasing the number of degrees of freedom of the system, and as a consequence it does not influence the spectral properties of the system asymptotically. 
Therefore, in order to analyze the boundness of the condition number of the system away from singularities, it is sufficient to study the spectral behaviour of the principal part in \eqref{eqn:proof_1}.
This can be established by the analytical evaluation of the eigenvalues of the principal term in \eqref{eqn:proof_1}, performed by means of a Schur analysis \citep{strang2009introduction}.
In particular, in the case $N_{C,1}\ge N_{V,1}$ it is found that
\begin{align}
\uplambda_1 &= \frac{\delta_1}{4}, \quad &\text{with multiplicity $N_{C,1}- N_{V,1}$}\\
\uplambda_2 &= \frac{1}{32}\left( 4\alpha_1+\beta_1+4\delta_1-\sqrt{(4\alpha_1+\beta_1-4\delta_1)^2+16\eta_1^2
}\right), \quad &\text{with multiplicity $N_{V,1}$}\\
\uplambda_3 &= \frac{1}{32}\left( 4\alpha_1+\beta_1+4\delta_1+\sqrt{(4\alpha_1+\beta_1-4\delta_1)^2+16\eta_1^2
}\right), \quad &\text{with multiplicity $N_{V,1}$};
\end{align}
in the case $N_{C,1}< N_{V,1}$ instead, the eigenvalues are
\begin{align}
\uplambda_1 &= \frac{4\alpha_1+\beta_1}{16}, \quad &\text{with multiplicity $N_{V,1}- N_{C,1}$}\\
\uplambda_2 &= \frac{1}{32}\left( 4\alpha_1+\beta_1+4\delta_1-\sqrt{(4\alpha_1+\beta_1-4\delta_1)^2+16\eta_1^2
}\right), \quad &\text{with multiplicity $N_{C,1}$}\\
\uplambda_3 &= \frac{1}{32}\left( 4\alpha_1+\beta_1+4\delta_1+\sqrt{(4\alpha_1+\beta_1-4\delta_1)^2+16\eta_1^2
}\right), \quad &\text{with multiplicity $N_{C,1}$}.
\end{align}
Hence, the condition number of the principal part of $\mat Z$, that is symmetric, positive-definite, is given by 
\begin{equation}
\max(\uplambda_1,\uplambda_2,\uplambda_3) / \min(\uplambda_1,\uplambda_2,\uplambda_3),\nonumber
\end{equation}
independent of mesh refinement. Therefore, the condition number of the overall matrix $\mat Z$ when asymptotically increasing the number of degrees of freedom of the system is bounded and the proposed system is well-conditioned with respect to dense discretizations.

Moreover, given the asymptotic behaviours
\begin{align}
&\alpha_1 =  \mathcal{O}(1)\,, \quad\quad\quad\quad\quad\quad\quad\,\,\, \,\,
\beta_1 =  \mathcal{O}(\mathit{CR}_{1}^{-1})\,,\nonumber\\
&\abs{\epsilon_1} =  \mathcal{O}(\mathit{CR}_{1}^{-1/2})\,,\quad\quad\quad\quad\quad
\delta_1 =  \mathcal{O}(1)\,,  
\nonumber\\
&\gamma_1 =  \mathcal{O}(\mathit{CR}_{1}^{-1})\,, \quad \quad\quad\quad\quad\quad\,\,
\abs{\eta_1} =  \mathcal{O}(\mathit{CR}_{1}^{-1/2})\,.\nonumber
\end{align}
valid in the limit $\mathit{CR}_{1}\rightarrow \infty$, we observe that the condition number of the principal part of $\mat Z$ tends to unity in the high-contrast regime, as $\alpha_1$ approaches $\delta_1$, that is $\left(\max(\uplambda_1,\uplambda_2,\uplambda_3) / \min(\uplambda_1,\uplambda_2,\uplambda_3)\right)- 1 = \mathcal{O}(\mathit{CR}_{1}^{-1/2})$ for $\mathit{CR}_{1}\rightarrow \infty$.

\subsubsection{$N-$compartment case}
For the generic, $N-$layered geometry, the matrix $\mat Z_\mr{p}$ in \eqref{eqn:fullformulation} can be written as
\begin{equation}
\begin{pmatrix}
(\alpha_1 + \beta_1/4)/4\,\mat I_{N_{V,1},N_{V,1}} & & & \eta_1/8\,\mat I_{N_{V,1},N_{C,1}} & & \\
& \ddots &&& \ddots &\\
&& (\alpha_N + \beta_N/4)/4\,\mat I_{N_{V,N},N_{V,N}}&&& \eta_N/8\,\mat I_{N_{V,N},N_{C,N}}\\
\eta_1/8\,\mat I_{N_{C,1},N_{V,1}} &&& \delta_1/4\,\mat I_{N_{C,1},N_{C,1}}&&\\
& \ddots &&& \ddots &\\
&& \eta_N/8\,\mat I_{N_{C,N},N_{V,N}}&&& \delta_N/4\,\mat I_{N_{C,N},N_{C,N}}\\
\end{pmatrix} + \mat K
\label{eqn:proof_N}
\end{equation}
where the scaling coefficients are
\begin{align}
\alpha_n &\coloneqq  q_{V,n}^4(\sigma_n+\sigma_{n+1})^2\,,\quad\quad\quad\quad\,\,\, \delta_n \coloneqq  q_{C,n}^4(\sigma_n^{-1}+\sigma_{n+1}^{-1})^2\,,\nonumber\\
\beta_n &\coloneqq  4R_n^2(q_{V,n}q_{C,n})^2\,,\quad\quad\quad\quad\quad \eta_n \coloneqq  -2R_nq_{C,n}^2(q_{V,n}q_{C,n})(\sigma_n^{-1}+\sigma_n^{-1})\,,\nonumber.
\end{align}
with subscript $n=1,\dots,N$.
The matrix $\mat K$ is a $(2N\times 2N)-$block matrix, discretizing a compact, block operator (see \ref{sec:proof_blockcompact}). Indeed, each block of $\mat K$ is a linear combination of matrices discretizing compact operators, either in the form,
\begin{equation}
\hat{\mathcal{N}}_{ij}\hat{\Deltaup}^{-1}_{\Gamma_j}\mathcal{D}^*_{jk},\quad\mathcal{D}_{ij}\hat{\Deltaup}^{-1}_{\Gamma_j}\hat{\mathcal{N}}_{jk},\quad\mathcal{D}_{ij}\hat{\Deltaup}^{-1}_{\Gamma_j}\mathcal{D}^*_{jk},
\end{equation}
whose compactness has already been discussed in the proof of  \Cref{prop_compactnessDLD},
or in the form
\begin{equation}
\hat{\mathcal{N}}_{ij}\hat{\Deltaup}^{-1}_{\Gamma_j}\hat{\mathcal{N}}_{jk},\quad {\mathcal{S}}_{ij}\hat{\tilde{\Deltaup}}_{\Gamma_j}{\mathcal{S}}_{jk},\quad {\mathcal{D}}^*_{ij}\hat{\tilde{\Deltaup}}_{\Gamma_j}{\mathcal{D}}_{jk},\quad {\mathcal{D}}^*_{ij}\hat{\tilde{\Deltaup}}_{\Gamma_j}{\mathcal{S}}_{jk},\quad {\mathcal{S}}_{ij}\hat{\tilde{\Deltaup}}_{\Gamma_j}{\mathcal{D}}_{jk},
\label{eqn:compact_offdiag_operators}
\end{equation}
with $i\ne j$ or $j\ne k$, resulting from the product of bounded and compact operators \citep[Theorem~1.5]{colton2013integral}. 

As in the one-compartment case, the analytic spectral analysis of the principal part of \eqref{eqn:proof_N}, when fixing the number of compartments $N$, is useful to determine the asymptotic spectral behaviour of the overall matrix $\mat Z_\mr{p}$ when increasing the number of unknowns.
In the general case, we can argue that, since the coefficients $\alpha_n$, $\beta_n$, $\delta_n$, and $\eta_n$ are independent from the refinement parameter $h$, also the condition number of the principal part of \eqref{eqn:proof_N} is $h-$independent.

In addition, we can perform an asymptotic analysis of the principal part of \eqref{eqn:proof_N} for the contrast ratio going to infinity, providing some insights in the spectral behaviour of the proposed formulation in the high-contrast and dense-discretization limit. We can assume, for example, an $N-$compartment structure (with $N$ odd) with conductivities $\sigma_{1+2n} = \sigma_\text{high}$ and $\sigma_{2+2n} = \sigma_\text{low}$, for $n$ running from $0$ to $(N-1)/2$, satisfying $\mathit{CR}=\sigma_\text{high} / \sigma_\text{low} \rightarrow \infty$. Then, the behaviour of the scalar scalings is
\begin{align}
&\alpha_i =  \mathcal{O}(1)\,, \quad\quad\quad\quad\quad\quad\quad\,\,\, \,\,
\beta_i =  \mathcal{O}(\mathit{CR}^{-1})\,,\nonumber\\
&\abs{\eta_i} =  \mathcal{O}(\mathit{CR}^{-1/2})\,,\quad\quad\quad\quad\quad
\delta_i =  \mathcal{O}(1)\,,
\nonumber
\end{align}
resulting in the behaviour of the condition number of the principal part of $\mat Z_\mr{p}$ as $\left(\text{cond}(\mat Z_\mr{p}-\mat K)-1\right)=\mathcal{O}(\mathit{CR}^{-1/2})$ as $\mathit{CR}\rightarrow\infty$.

The asymptotic form above highlights the fact that the designed preconditioning is optimized for high-contrast structures. Nevertheless, the proposed formulation remains stable in the whole range of conductivity ratios commonly employed to model the head biological tissues, as shown in the numerical results \Cref{sec:result}.

\section{Numerical results} \label{sec:result}
The numerical results reported in this section showcase the efficacy of the proposed formulation and offer a comparison with the standard symmetric formulation. To this end, we first apply the numerical schemes to the canonical, spherical, head model, for which the analytic solution is available as a benchmark \citep{demunck1988potential,zhang1995fast}. Then, we move on to a more realistic head model, extracted from magnetic resonance imaging data, over which we solve the inverse EEG problem.

\begin{figure}
\subfloat[\label{subfig-1:nested}]{%
  \includegraphics[width=0.3
\columnwidth]{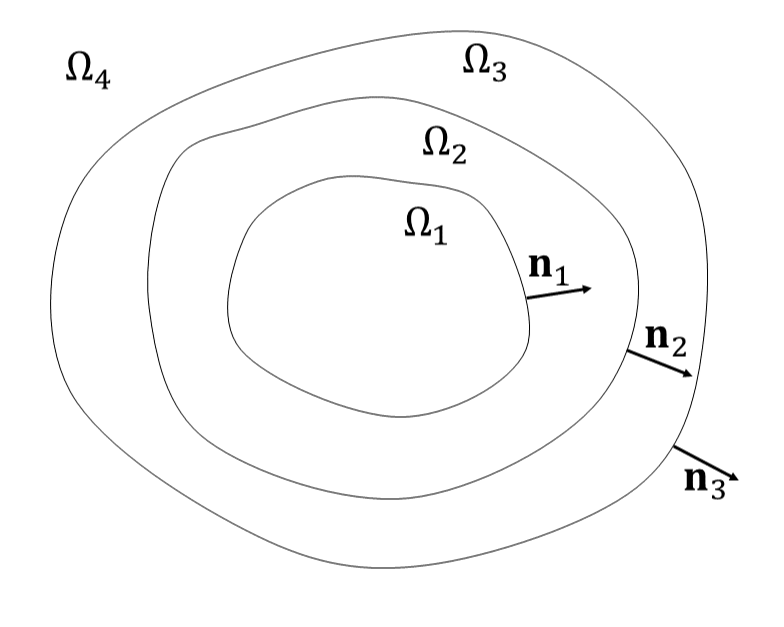}
}
\subfloat[\label{subfig-2:sphere_model}]{%
  \includegraphics[width=0.4
\columnwidth]{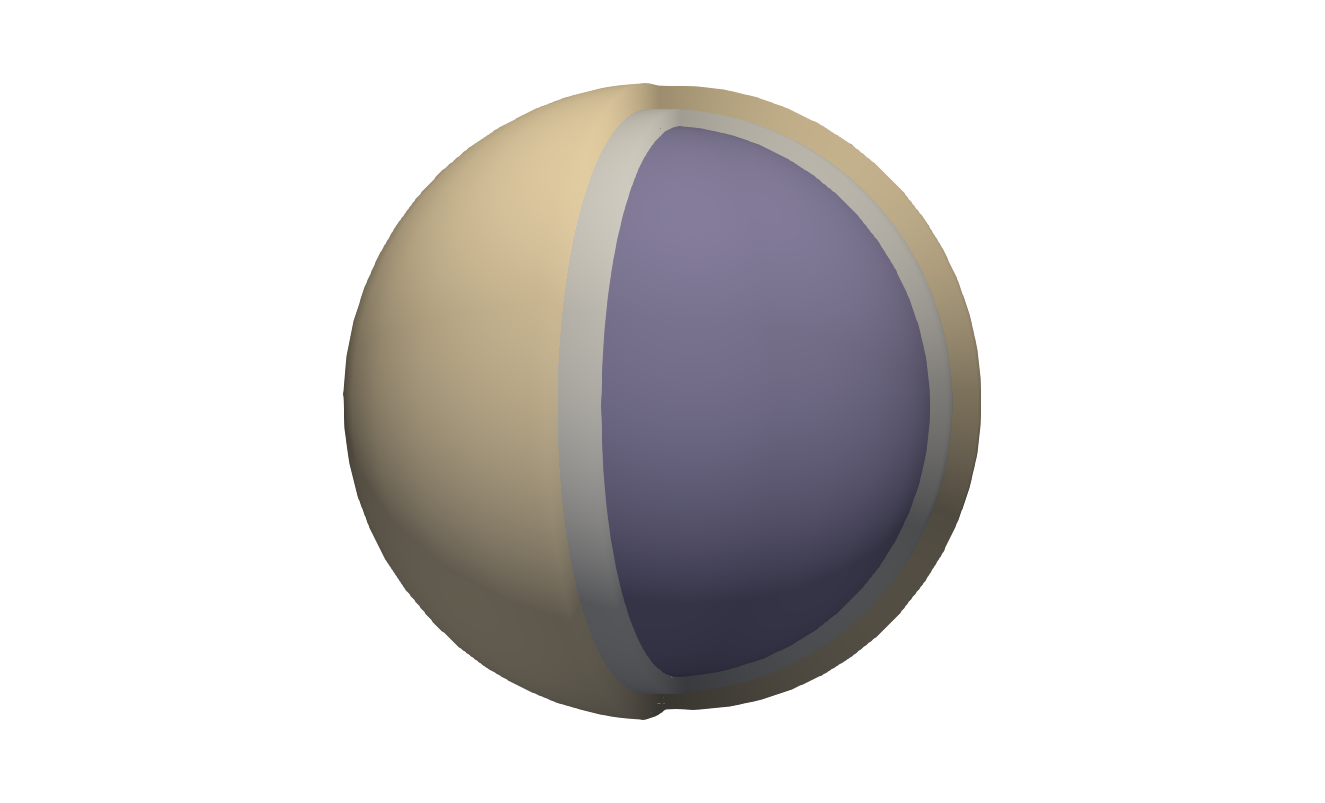}
}
\hfill
\subfloat[\label{subfig-3:mri_model}]{%
  \includegraphics[width=0.4\columnwidth]{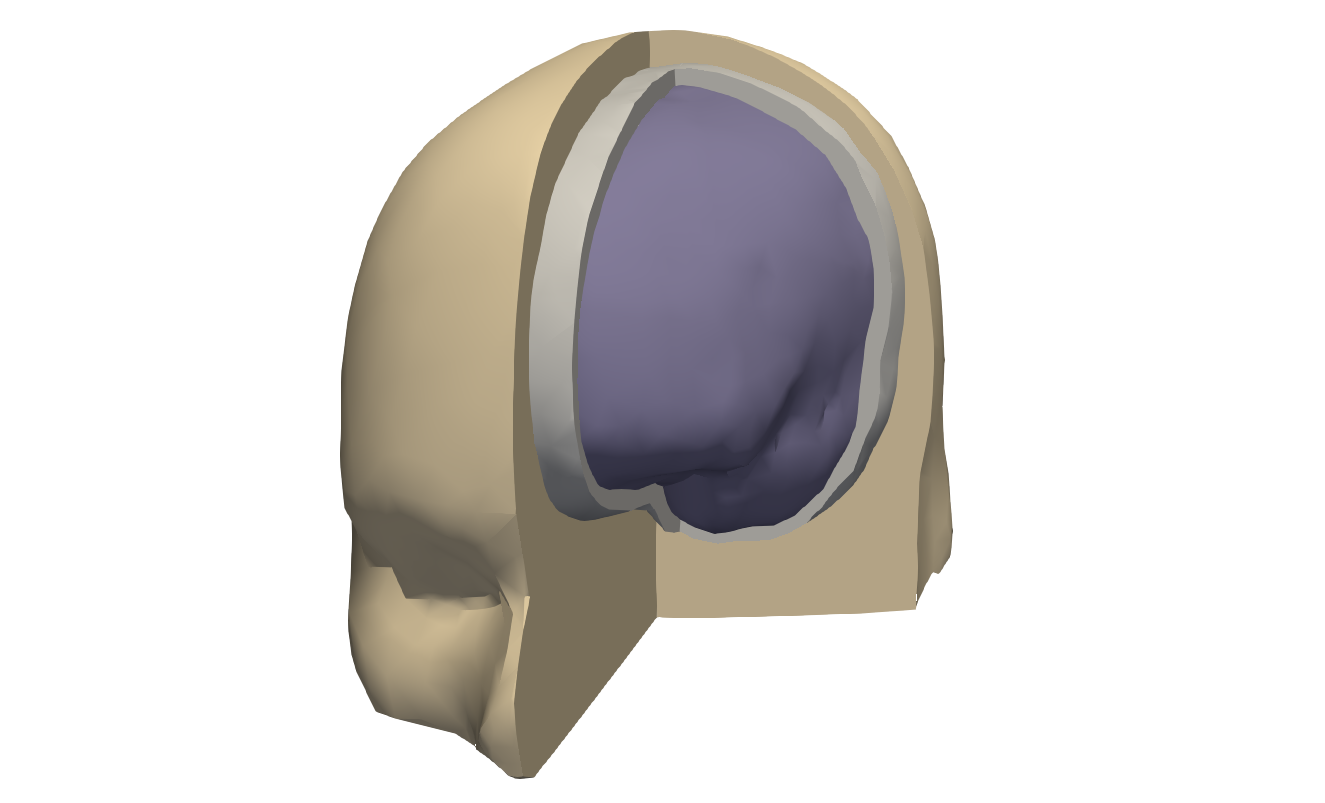}
}
\caption{(a) Schematic representation of the geometry under study. (b) The spherical and (c) the realistic, MRI-obtained head models. The three-compartments represent the brain, the skull and the skin.}
\end{figure}

\subsection{The spherical head model} \label{sec:results_sphere}
A three-compartment head model is considered in this section, delimited by the spherical surfaces $\Gamma_1$, $\Gamma_2$, and $\Gamma_3$ with radii $R_1 = $ \SI{8.7}{cm}, $R_2 = $ \SI{9.2}{cm}, and $R_3 = $ \SI{10}{cm}.
In this first set of experiments, the source employed is a single dipole, placed inside $\Omega_1$ at a distance of \SI{4.2}{cm} from $\Gamma_1$ and radially directed.
The conductivities of the three layers, modelling the brain, the skull and the scalp, have been set to $\sigma_1=$ \SI[parse-numbers = false]{\frac{1}{3}}{S/m},  $\sigma_2=$ \SI[parse-numbers = false]{\frac{1}{240}}{S/m}, and  $\sigma_3=$ \SI[parse-numbers = false]{\frac{1}{3}}{S/m}, as typical in these models.

As first assessment, we verify the convergence rate of our new formulation, similar as in the unpreconditioned symmetric formulation case with same accuracy levels as shown in \Cref{fig:sphere_accuracy}.

Next we assess the efficacy of our preconditioner.
In \Cref{fig:sphere_cn}, the variation of the condition number as a function of the inverse refinement parameter $1/h$ is reported for the two formulations considered. The efficacy of the preconditioning strategy is demonstrated by the constant conditioning in refinement, while the condition number of the symmetric formulation grows with the discretization as $\mathcal{O}(h^{-2})$.
A similar behaviour is reflected in \Cref{subfig-1:sphere_NIt}, where the number of iterations to solve the system up to a fixed level of accuracy is reported for the two formulations. The iterative method employed to solve the system in the two cases is the conjugate gradient-squared (CGS) solver \citep{sonneveld1989cgs}.

The linear system arising from the preconditioning scheme presented in this work can also be solved by means of a preconditioned conjugate gradient (PCG) scheme \citep{hestenes1952methods,steihaug1982conjugate}, by virtue of its symmetric, positive, definiteness properties.
The preconditioning matrix is $\mat M^2$, as explained in \Cref{sec:newformulation}.
The number of matrix-vector products required for the solution, corresponding to the number of iterations, is shown in \Cref{subfig-2:sphere_NMVP} in green. This can be compared with the number of matrix-vector products (two per iteration) required to solve the symmetric formulation and the proposed preconditioned formulation by means of the conjugate gradient squared solver, at the same level of accuracy, shown in the same figure. 

The efficacy of the proposed preconditioning has to be tested also for different values of conductivity contrasts between adjacent compartments. Experimental evidences from in vivo measurements have shown that the conductivity ratio between brain, skull, and scalp range between $(1{:}1/15{:}1)$ and $(1{:}1/80{:}1)$ \citep{gonalves2003vivo, zhang2006estimation, clerc2005vivo}. So, we evaluated the stability of our formulation for conductivity ratios spanning from $(1{:}1/10{:}1)$ to $(1{:}1/100{:}1)$. \Cref{subfig-1:sphere_NIt_CR}, showing the number of iterations of the CGS solver as a function of the conductivity contrast ratio, gives evidence of the preconditioning effect obtained. The number of matrix-vector products required to solve the proposed formulation by means of the conjugate gradient scheme is also shown in \Cref{subfig-2:sphere_NMVP_CR} and compared with the one required for solving both the symmetric formulation and the proposed scheme by means of the CGS solver.

\begin{figure}
\centerline{\includegraphics[width=0.5\columnwidth]{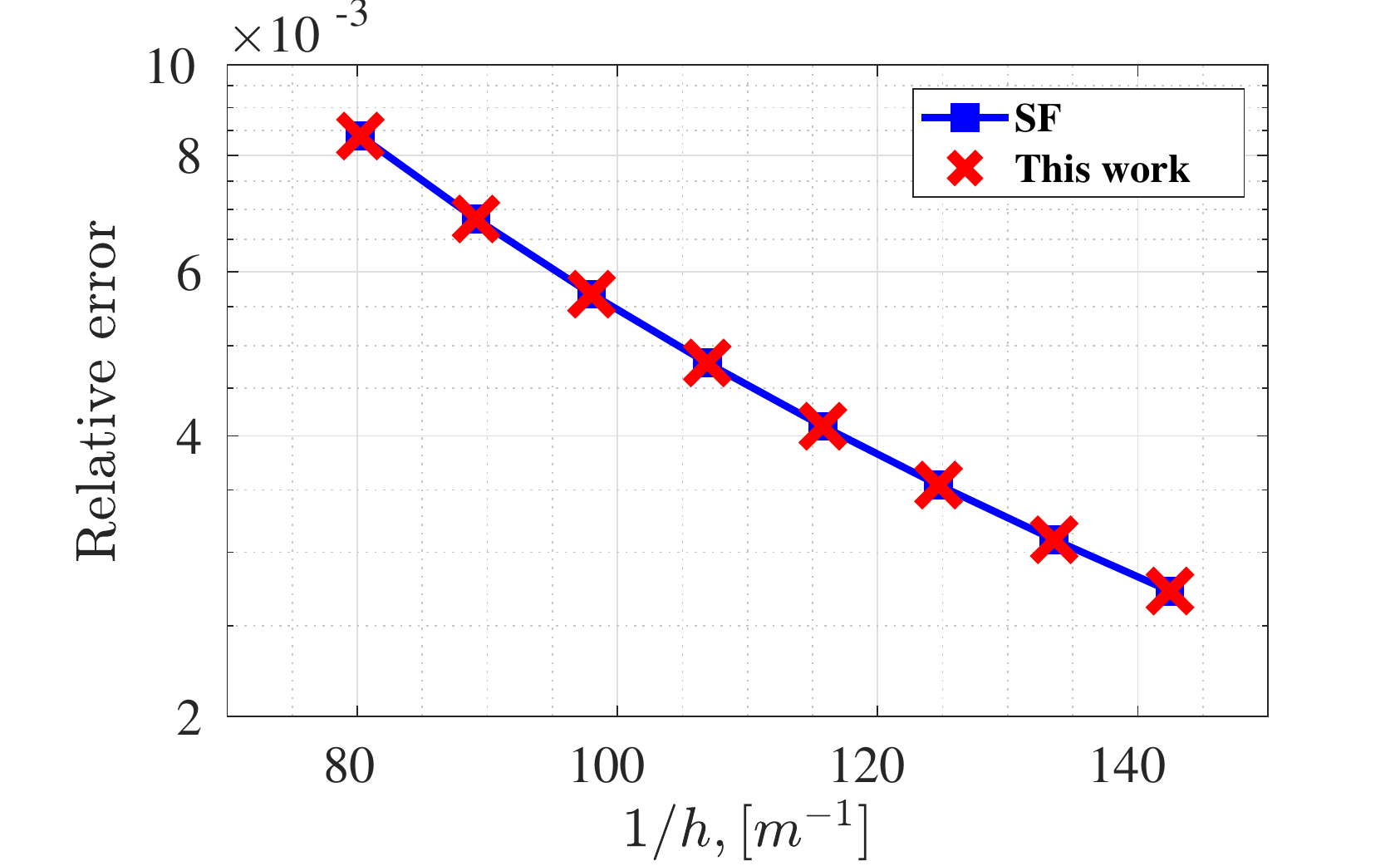}}
\caption{Relative error between the numerical solution and the analytic solution for the potential on $\Gamma_3$ against the inverse mesh refinement parameter $1/h$: comparison between the symmetric formulation (SF) and this work.}
\label{fig:sphere_accuracy}
\end{figure}

\begin{figure}
\centerline{\includegraphics[width=0.5\columnwidth]{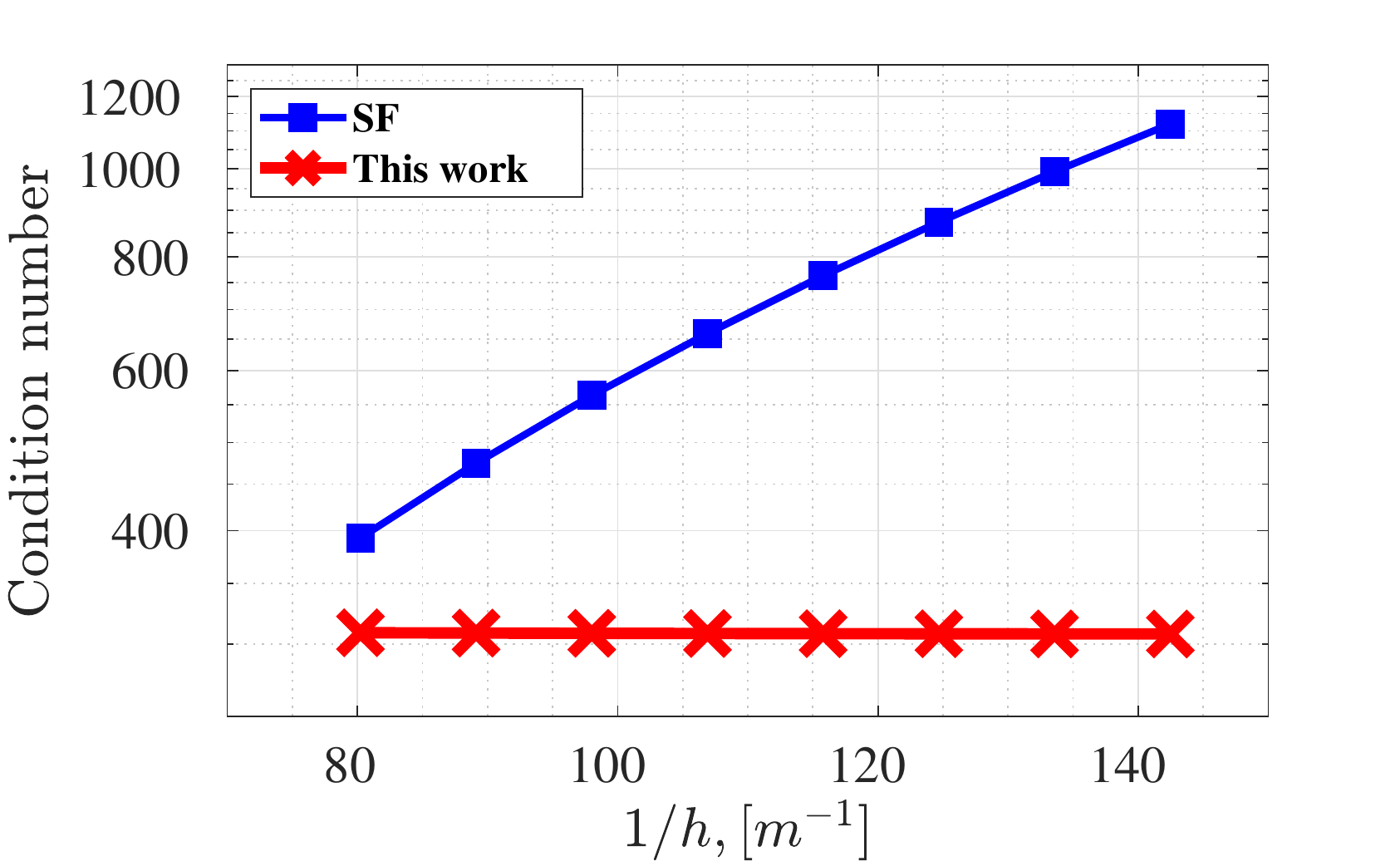}}
\caption{Condition number as a function of the inverse mesh refinement parameter $1/h$: comparison between the symmetric formulation (SF) and this work.}
\label{fig:sphere_cn}
\end{figure}

\begin{figure}
\subfloat[\label{subfig-1:sphere_NIt}]{%
  \includegraphics[width=0.5
\columnwidth]{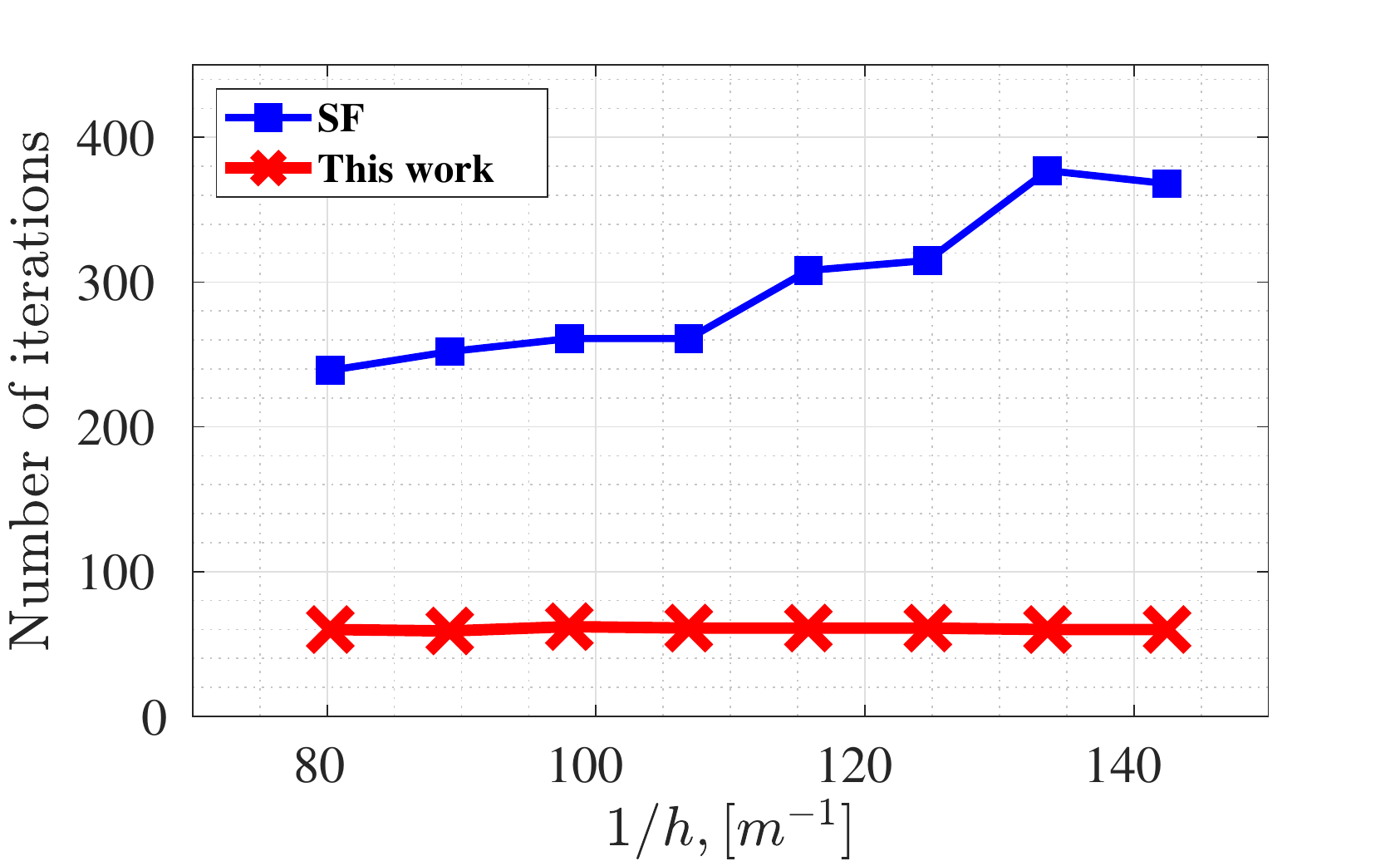}
}
\hfill
\subfloat[\label{subfig-2:sphere_NMVP}]{%
  \includegraphics[width=0.5\columnwidth]{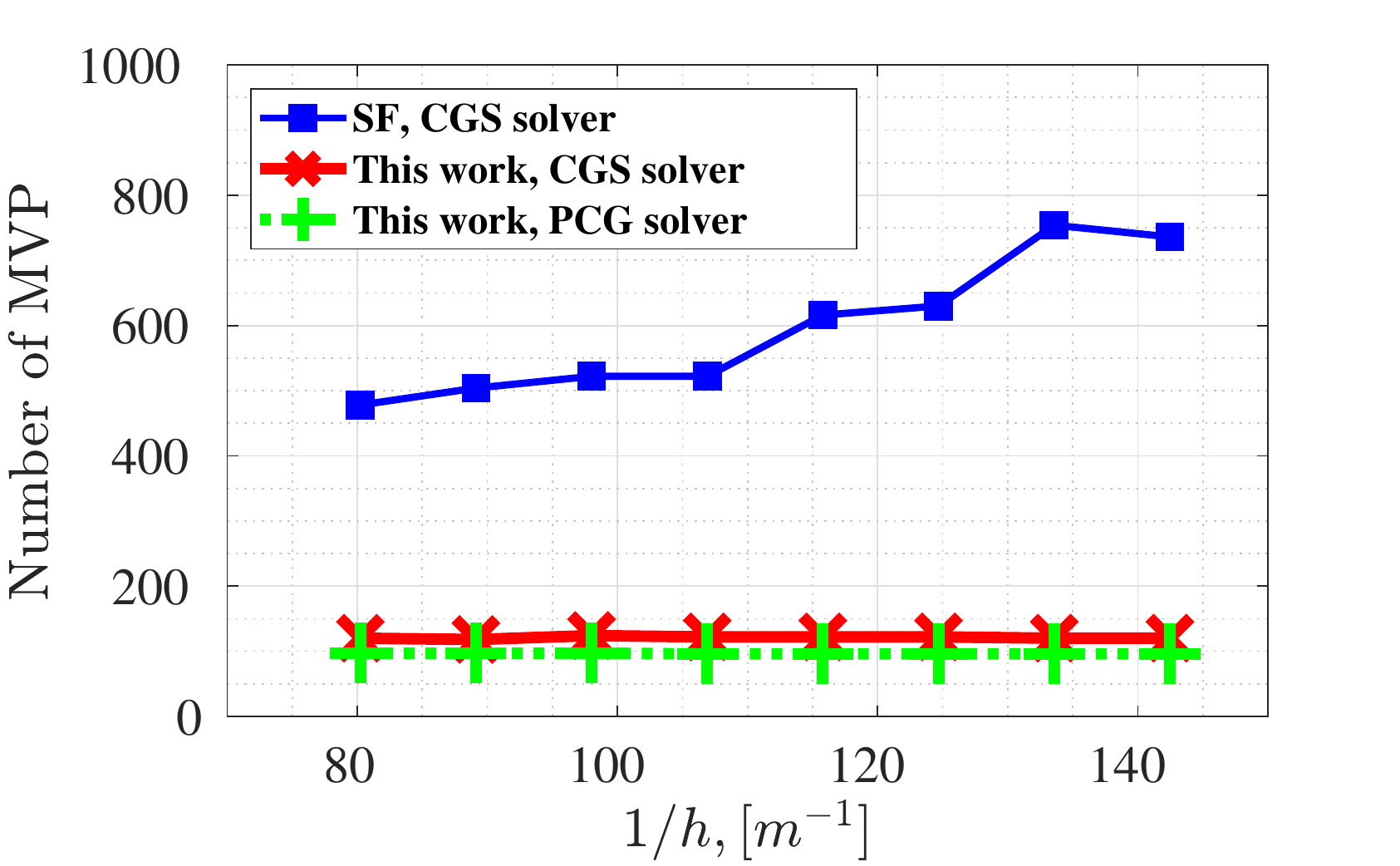}
}
\caption{(a) Number of iterations and (b) number of matrix-vector products (MVP) as a function of the inverse mesh refinement parameter $1/h$: comparison between the symmetric formulation (SF) and this work.}
\end{figure}

\begin{figure}
\subfloat[\label{subfig-1:sphere_NIt_CR}]{%
  \includegraphics[width=0.5
\columnwidth]{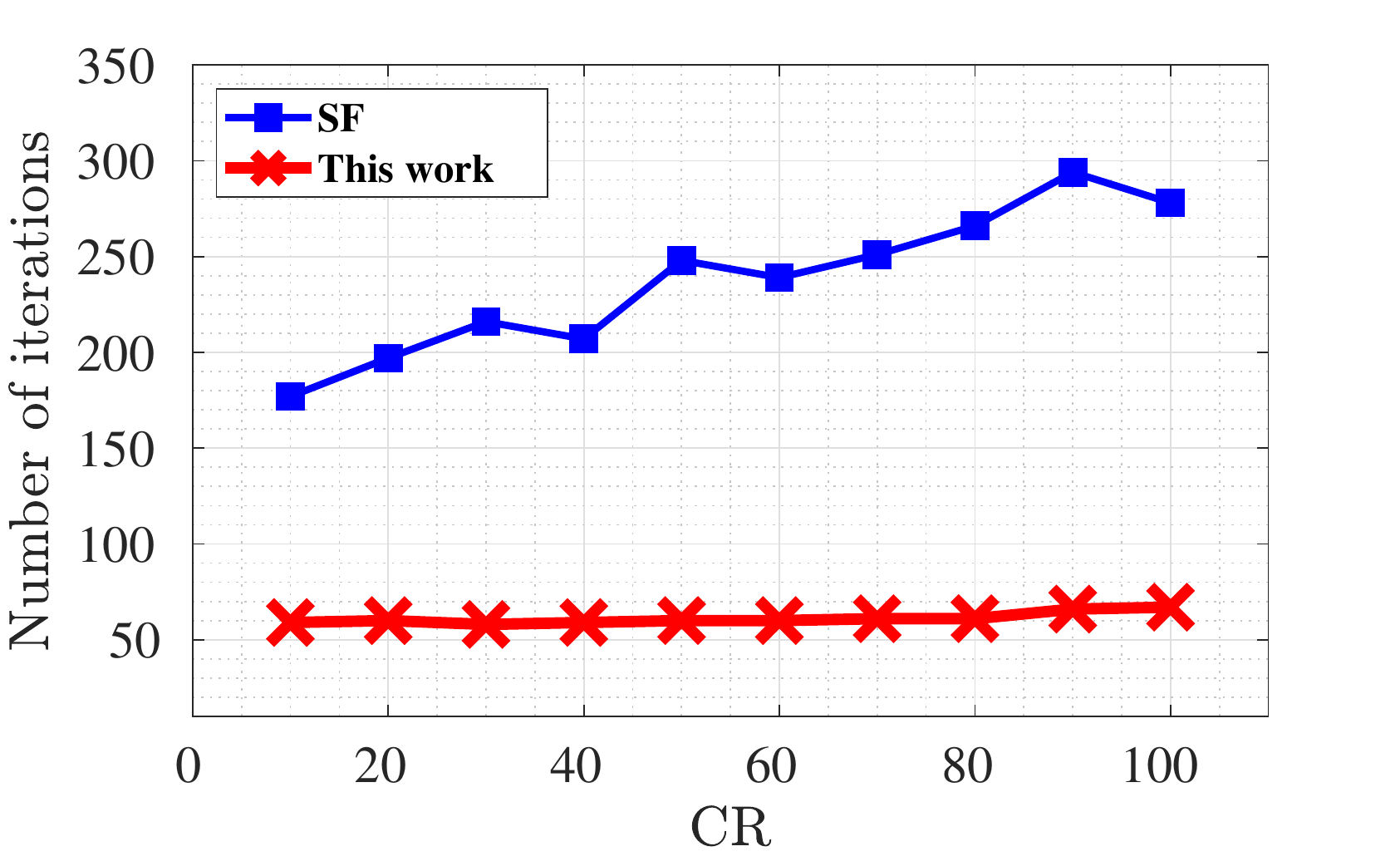}
}
\hfill
\subfloat[\label{subfig-2:sphere_NMVP_CR}]{%
  \includegraphics[width=0.5\columnwidth]{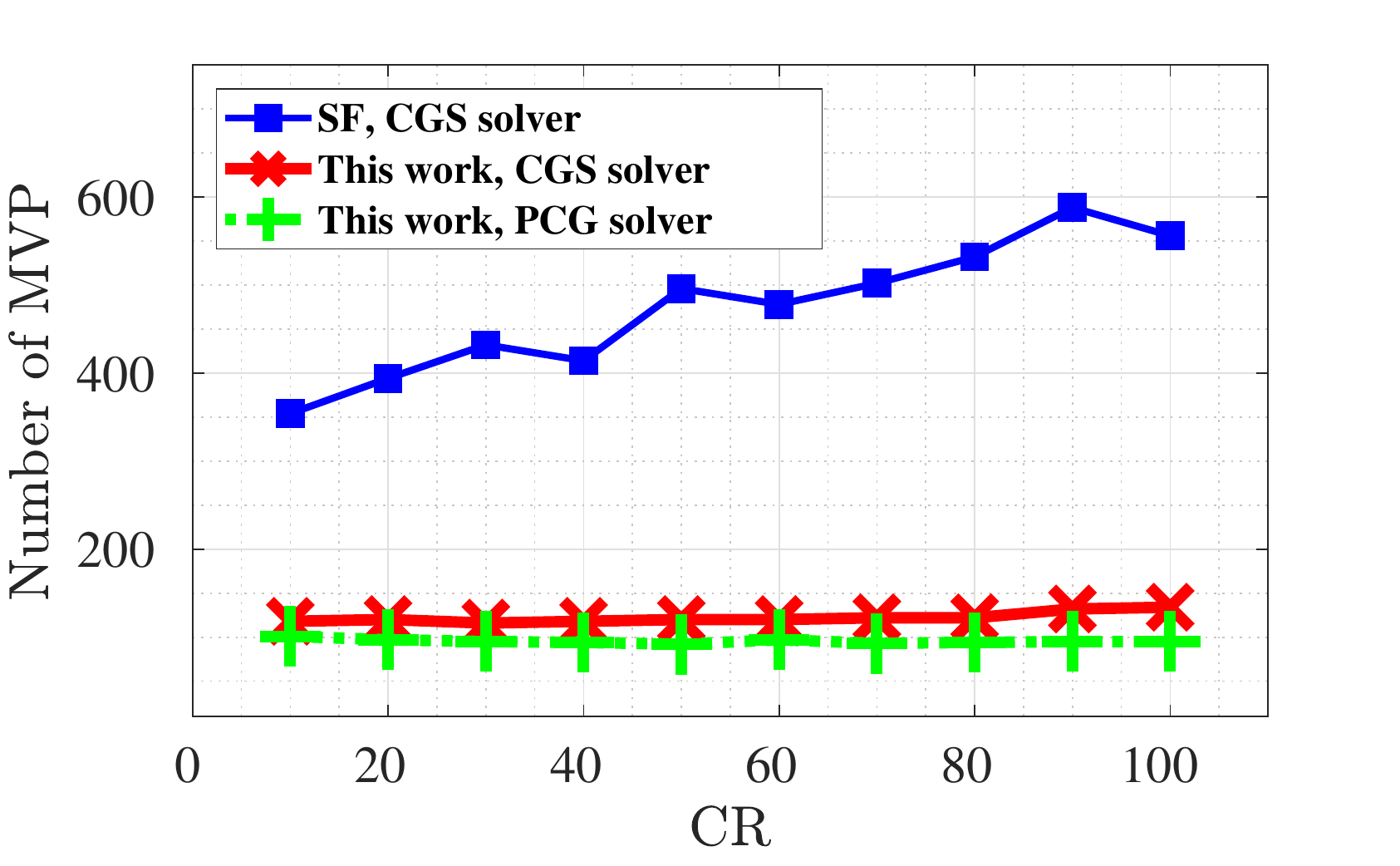}
}
\caption{(a) Number of iterations and (b) number of matrix-vector products (MVP) as a function of the conductivity ratio $CR$: comparison between the symmetric formulation (SF) and this work at $1/h\simeq$ \SI{80.2}{m^{-1}}.}
\end{figure}

\subsection{The MRI-obtained head model} \label{sec:results_MRI}
Subsequently, a realistic three-compartment head model obtained from MRI data has been considered. The boundaries of the geometry have been discretized by means of the meshes $\Gamma_1$, with $N_{C,1} = 3684$, $N_{V,1} = 1844$, $\Gamma_2$, with $N_{C,2} = 2334$, $N_{V,2} = 1169$, and  $\Gamma_3$, with $N_{C,3} = 2086$, $N_{V,3} = 1045$. The conductivities of the tissues have been set at $\sigma_1=$ \SI[parse-numbers = false]{\frac{1}{3}}{S/m},  $\sigma_2=$ \SI[parse-numbers = false]{\frac{1}{150}}{S/m}, and  $\sigma_3=$ \SI[parse-numbers = false]{\frac{1}{3}}{S/m}. The neural source has been modelled by a point dipole placed inside the inner compartment at a distance of approximately \SI{3.5}{cm} from $\Gamma_1$. \Cref{subfig-1:headA} shows the resulting potential distribution on the exterior layer. Moreover, the absolute difference of this result with respect to the one obtained from the unpreconditioned symmetric formulation is reported in correspondence of the position of $65$ electrodes placed on the scalp. The linear system of \num{10076} equations arising from the proposed formulation has been solved iteratively by means of the CGS solver in \num{155} iterations, to be compared with the \num{2157} iterations needed to solve the symmetric formulation by means of the same solver and by imposing an identical tolerance.

Given the excellent agreement in the results from the two formulations, the proposed scheme can clearly be employed in the evaluation of the lead-field matrix needed for the solution of the inverse EEG problem, at a reduced computational cost compared with the non-preconditioned one. The outcome of this test is shown in \Cref{subfig-2:headB}, where the neural source reconstructed from EEG measurements is represented. In particular, this has been obtained by applying the sLORETA inversion algorithm \citep{pascual-marqui2002standardized} to a lead-field matrix $\mat G \in \mathbb{R}^{N_E\times N_D}$, where $N_E = 65$ is the number of measurement points (corresponding to the number of electrodes) and $N_D = 19279$ is the number of test dipoles uniformly placed inside $\Omega_1$.

\begin{figure}
\subfloat[\label{subfig-1:headA}]{%
  \includegraphics[width=0.56
\columnwidth]{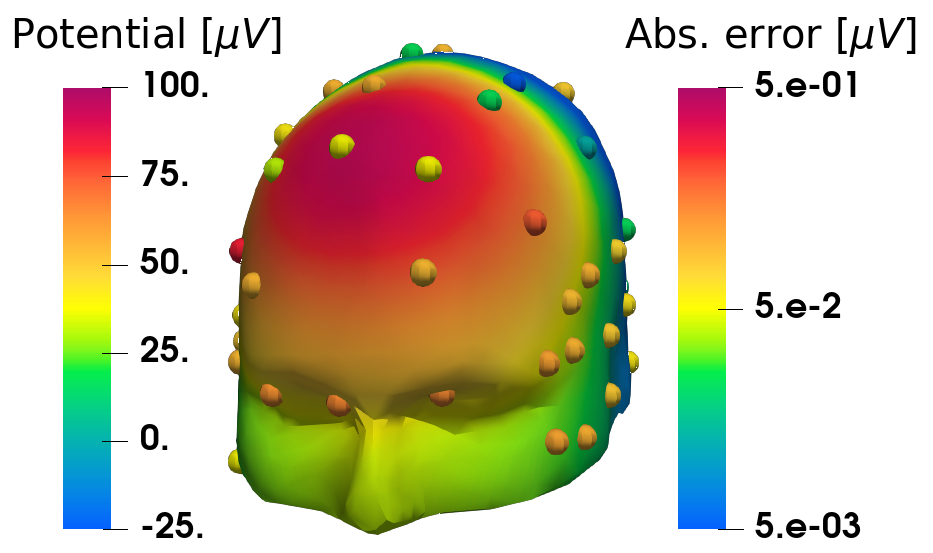}
}
\hfill
\subfloat[\label{subfig-2:headB}]{%
  \includegraphics[width=0.55\columnwidth]{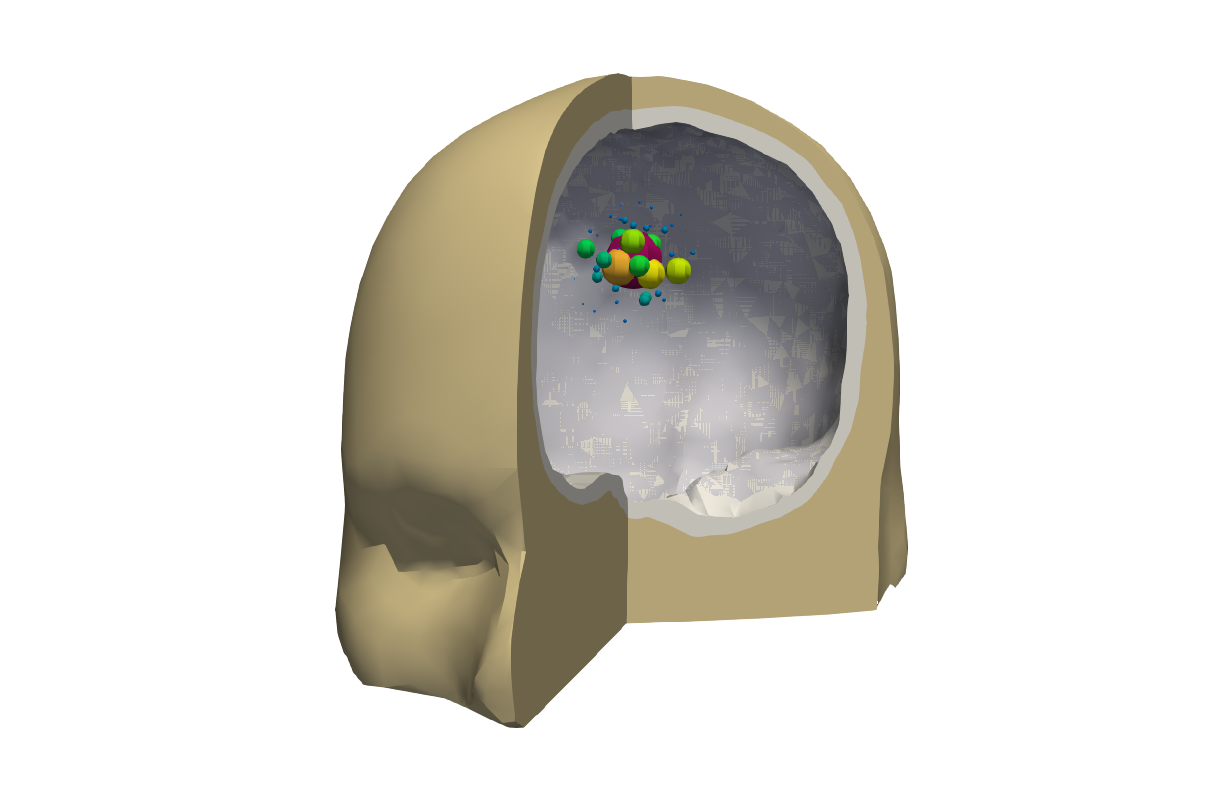}
}
\caption{(a) Scalp potential distribution and error at the electrodes with respect to the symmetric formulation solution. (b) Reconstructed epileptogenic source.}
\end{figure}

\appendix

\section{Proof of compactness of a block operator with compact blocks}
\label{sec:proof_blockcompact}
This section will provide a proof for the compactness of a $2\times 2$ block operator with compact blocks, that is, whose blocks are compact operators, rearranged from \citep{pillain2017line}. As a corollary, the compactness of a $N\times N$ block operator with compact blocks is shown.

Given the normed spaces $X_1$, $X_2$, ..., $X_N$, their Cartesian product, denoted as $X_1\bigoplus X_2 ...\bigoplus X_N$, equipped with the norm
\begin{equation}
||(x_1, x_2,...,n_N)||_{\bigoplus_{i=1}^N X_i} \coloneqq  \left( \sum_{i=1}^N ||x_i||^2_{X_i} \right)^{1/2},
\end{equation}
is their direct sum normed space \citep{abramovich2002invitation}, needed in the following derivations.

\begin{proposition}[]\label{Kd_comp}
Given the compact operators $\mathcal{K}_{11}:X\rightarrow X$ and $\mathcal{K}_{22}:Y\rightarrow Y$,
the block operator
\begin{equation}
\mathcal{K}_\mr{d} \coloneqq  \begin{pmatrix}
\mathcal{K}_{11} & 0 \\
0 & \mathcal{K}_{22}
\end{pmatrix}\,:\quad X \bigoplus Y \rightarrow X \bigoplus Y
\end{equation} is compact.
\end{proposition}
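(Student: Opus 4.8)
The plan is to prove Proposition~\ref{Kd_comp} directly from the sequential characterization of compactness, using the single elementary fact that, by the very definition of $\|\cdot\|_{\bigoplus}$, convergence of a sequence in $X\bigoplus Y$ is equivalent to convergence of each of its two component sequences, and boundedness in $X\bigoplus Y$ is equivalent to boundedness of each component sequence. First I would fix an arbitrary bounded sequence $(x_n,y_n)_n$ in $X\bigoplus Y$; then $(x_n)_n$ is bounded in $X$ and $(y_n)_n$ is bounded in $Y$.

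Next, I would perform a two-step subsequence extraction. By the compactness of $\mathcal{K}_{11}$ there is a subsequence, still denoted $(x_n)_n$ for brevity, along which $(\mathcal{K}_{11}x_n)_n$ converges in $X$. Passing to this subsequence, $(y_n)_n$ remains bounded in $Y$, so by the compactness of $\mathcal{K}_{22}$ there is a further subsequence along which $(\mathcal{K}_{22}y_n)_n$ converges in $Y$. Along this final subsequence one has $\mathcal{K}_\mr{d}(x_n,y_n)=(\mathcal{K}_{11}x_n,\mathcal{K}_{22}y_n)$, and since both components converge, the pair converges in $X\bigoplus Y$ by the norm identity $\|(u,v)\|_{\bigoplus}^2=\|u\|_X^2+\|v\|_Y^2$. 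Thus $\mathcal{K}_\mr{d}$ maps every bounded sequence onto a sequence with a convergent subsequence, i.e.\ $\mathcal{K}_\mr{d}$ is compact. (An equivalent, operator-theoretic phrasing I could give instead is the blockwise identity $\mathcal{K}_\mr{d}=\mathrm{in}_X\,\mathcal{K}_{11}\,\mathrm{pr}_X+\mathrm{in}_Y\,\mathcal{K}_{22}\,\mathrm{pr}_Y$, where the canonical projections $\mathrm{pr}_X,\mathrm{pr}_Y$ and injections $\mathrm{in}_X,\mathrm{in}_Y$ are bounded with norm at most $1$; each summand is then compact as a composition of bounded and compact operators \citep[Theorem~1.5]{colton2013integral}, and a sum of compact operators is compact.)

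There is essentially no genuine obstacle here; the only point requiring a word of care is the interplay between the direct-sum norm and componentwise convergence/boundedness, which is immediate from the definition of $\|\cdot\|_{\bigoplus}$ given above. The corollary for an $N\times N$ block-diagonal operator with compact diagonal blocks $\mathcal{K}_{11},\dots,\mathcal{K}_{NN}$ then follows either by an identical $N$-fold iterated extraction of subsequences, or by induction on $N$: writing $X_1\bigoplus\cdots\bigoplus X_N=X_1\bigoplus(X_2\bigoplus\cdots\bigoplus X_N)$ and invoking the inductive hypothesis on the second factor, the claim reduces to the two-block case already proved in Proposition~\ref{Kd_comp}.
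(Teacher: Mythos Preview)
Your primary argument is correct and is essentially identical to the paper's own proof: start from a bounded sequence in $X\bigoplus Y$, use the norm identity to get componentwise boundedness, extract a subsequence along which $\mathcal{K}_{11}x_n$ converges, then a further subsequence along which $\mathcal{K}_{22}y_n$ converges, and conclude via the same norm identity. The only cosmetic difference is that the paper keeps explicit double-subscript indexing $n_{i_k}$ and writes out the final $\sqrt{2}\epsilon$ estimate, whereas you relabel subsequences; the content is the same.

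Your parenthetical alternative via $\mathcal{K}_\mr{d}=\mathrm{in}_X\,\mathcal{K}_{11}\,\mathrm{pr}_X+\mathrm{in}_Y\,\mathcal{K}_{22}\,\mathrm{pr}_Y$ is a genuinely different (and slicker) route that the paper does not take; it trades the diagonal extraction for two applications of the ``bounded $\circ$ compact is compact'' and ``sum of compacts is compact'' facts, and has the advantage of extending to the full $N\times N$ block case in one line without any subsequence bookkeeping.
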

\begin{proof}
Let $\{u_n\}= \{(x_n,y_n)\}$ be a bounded sequence in $X\bigoplus Y$, that is, there exists a real positive constant $c$ such that
\begin{equation}
||u_n||^2_{X\bigoplus Y} = ||x_n||^2_X+ ||y_n||^2_Y\le c.
\end{equation}
This implies that $\{x_n\}$ and $\{y_n\}$ are bounded in $X$ and in $Y$. Therefore, by virtue of the compactness of $\mathcal{K}_{11}$, the sequence $\{\mathcal{K}_{11} x_n\}$ contains a convergent subsequence \citep[Theorem~1.2]{colton2013integral}, denoted by $\{\mathcal{K}_{11} x_{n_i}\}$.
Due to the boundness of $\{y_n\}$, also $\{y_{n_i}\}$ is bounded. Hence, $\{\mathcal{K}_{22} y_{n_i}\}$ contains a convergent subsequence, denoted as $\{\mathcal{K}_{22}  y_{n_{i_k}}\}$. Clearly, also $\{\mathcal{K}_{11} x_{n_{i_k}}\}$ is convergent, as subsequence of a convergent subsequence.

In symbols, $\forall \epsilon>0$, $\exists K$ such that $\forall k > K$
\begin{equation}
||\mathcal{K}_{11} x_{n_{i_k}} - l_x||_X < \epsilon \quad \text{and} \quad ||\mathcal{K}_{22} y_{n_{i_k}} - l_y||_Y < \epsilon.
\end{equation}
It follows that the application of the operator $\mathcal{K}_\mr{d}$ to the bounded sequence $\{u_n\}$, reading
\begin{equation}
\{\mathcal{K}_\mr{d} u_n\} = \{(\mathcal{K}_{11} x_n,\mathcal{K}_{22} y_n)\}
\end{equation}
contains the convergent subsequence $\{\mathcal{K}_\mr{d} u_{n_{i_k}}\} = \{ (\mathcal{K}_{11} x_{n_{i_k}},\mathcal{K}_{22} y_{n_{i_k}}) \}$. Indeed, $\forall k > K$
\begin{equation}
||\mathcal{K}_\mr{d} u_{n_{i_k}} - \begin{pmatrix} l_x\\l_y \end{pmatrix} ||_{X\bigoplus Y} = \left( || \mathcal{K}_{11} x_{n_{i_k}} - l_x ||^2_X + ||\mathcal{K}_{22} y_{n_{i_k}} - l_y||^2_Y\right) ^{1/2} \le \sqrt{2}\epsilon = \epsilon',
\end{equation}
which concludes the proof.
\end{proof}
\begin{proposition}[]\label{Kod_comp}
Given the compact operators $\mathcal{K}_{12}:Y\rightarrow X$ and $\mathcal{K}_{21}:X\rightarrow Y$,
the block operator
\begin{equation}
\mathcal{K}_\mr{od} \coloneqq  \begin{pmatrix}
0 & \mathcal{K}_{12} \\
\mathcal{K}_{21} & 0
\end{pmatrix}\,:\quad X \bigoplus Y \rightarrow X \bigoplus Y
\end{equation} is compact.
\end{proposition}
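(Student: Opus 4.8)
The plan is to reproduce, with the two blocks interleaved, the double subsequence-extraction argument already used for \Cref{Kd_comp}. First I would fix an arbitrary bounded sequence $\{u_n\} = \{(x_n,y_n)\}$ in $X \bigoplus Y$, so that $||u_n||^2_{X\bigoplus Y} = ||x_n||^2_X + ||y_n||^2_Y \le c$ for some $c>0$; this immediately forces $\{x_n\}$ to be bounded in $X$ and $\{y_n\}$ to be bounded in $Y$. Since $\mathcal{K}_{21}: X \rightarrow Y$ is compact, $\{\mathcal{K}_{21} x_n\}$ contains a convergent subsequence $\{\mathcal{K}_{21} x_{n_i}\}$ \citep[Theorem~1.2]{colton2013integral}. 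Passing to this subsequence, $\{y_{n_i}\}$ is still bounded in $Y$, so compactness of $\mathcal{K}_{12}: Y \rightarrow X$ yields a further subsequence $\{\mathcal{K}_{12} y_{n_{i_k}}\}$ converging in $X$, while $\{\mathcal{K}_{21} x_{n_{i_k}}\}$ remains convergent as a subsequence of a convergent sequence.

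Denoting the limits by $l_x \in X$ and $l_y \in Y$, I would then close the argument exactly as in \Cref{Kd_comp}: for all $\epsilon>0$ there is $K$ such that, for $k>K$,
\[
||\mathcal{K}_\mr{od} u_{n_{i_k}} - \begin{pmatrix} l_x \\ l_y \end{pmatrix}||_{X\bigoplus Y} = \left( ||\mathcal{K}_{12} y_{n_{i_k}} - l_x||^2_X + ||\mathcal{K}_{21} x_{n_{i_k}} - l_y||^2_Y \right)^{1/2} \le \sqrt{2}\,\epsilon = \epsilon',
\]
so $\{\mathcal{K}_\mr{od} u_n\}$ contains a convergent subsequence, which shows $\mathcal{K}_\mr{od}$ is compact. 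An equivalent, slightly slicker route I would mention is the factorization $\mathcal{K}_\mr{od} = \widetilde{\mathcal{K}}_\mr{d}\,\mathcal{P}$, where $\mathcal{P}: X \bigoplus Y \rightarrow Y \bigoplus X$, $(x,y)\mapsto(y,x)$, is a bounded (indeed isometric) swap and $\widetilde{\mathcal{K}}_\mr{d} = \diag(\mathcal{K}_{12},\mathcal{K}_{21}): Y \bigoplus X \rightarrow X \bigoplus Y$ is a diagonal block operator with compact blocks, hence compact by \Cref{Kd_comp}; then $\mathcal{K}_\mr{od}$ is compact as the composition of a compact and a bounded operator \citep[Theorem~1.5]{colton2013integral}.

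I do not expect a genuine obstacle here, since the statement is the exact off-diagonal counterpart of \Cref{Kd_comp}. The only point requiring a line of care --- the "hard part", such as it is --- is that \Cref{Kd_comp} is literally stated for endomorphisms $\mathcal{K}_{11}: X\rightarrow X$ and $\mathcal{K}_{22}: Y\rightarrow Y$, whereas here the blocks map between distinct spaces. I would simply observe that its proof uses nothing about the codomains beyond the convergence criterion in a normed space, so it applies verbatim to $\widetilde{\mathcal{K}}_\mr{d}$; alternatively, the direct extraction argument of the first two paragraphs sidesteps the issue entirely and can be taken as the definitive proof.
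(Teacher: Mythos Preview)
Your proposal is correct and follows essentially the same double subsequence-extraction argument as the paper, merely swapping the order in which $\mathcal{K}_{21}$ and $\mathcal{K}_{12}$ are invoked (the paper extracts via $\mathcal{K}_{12}$ first, then $\mathcal{K}_{21}$). The additional factorization $\mathcal{K}_\mr{od} = \widetilde{\mathcal{K}}_\mr{d}\,\mathcal{P}$ you sketch is a nice alternative not present in the paper, and your caveat about \Cref{Kd_comp} being stated for endomorphisms is accurate and properly handled.
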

\begin{proof}
We follow similar steps as in the proof of \Cref{Kd_comp}. 
Let $\{u_n\}$ be a bounded sequence as above.
Given the compactness of $\mathcal{K}_{12}$, the sequence $\{\mathcal{K}_{12} y_n\}$ contains a converging subsequence, denoted by $\{\mathcal{K}_{12}  y_{n_i}\}$.
Then, we notice that the compact operator $\mathcal{K}_{21}$ applied to the bounded subsequence $x_{n_i}$ contains a converging subsequence, noted as $x_{n_{i_k}}$.
Finally, $\forall \epsilon>0$, $\exists K'$ such that $\forall k > K'$
\begin{equation}
||\mathcal{K}_{12} y_{n_{i_k}} - l_x'||_X < \epsilon \quad \text{and} \quad ||\mathcal{K}_{21} x_{n_{i_k}} - l_y'||_Y < \epsilon.
\end{equation}
Therefore, the sequence $\{\mathcal{K}_\mr{od} u_n\}$ contains a converging subsequence, denoted by $\{\mathcal{K}_\mr{od} u_{n_{i_k}}\}$. Indeed, $\forall k > K'$,
\begin{equation}
||\mathcal{K}_\mr{od} u_{n_{i_k}} - \begin{pmatrix} l_x'\\l_y' \end{pmatrix} ||_{X\bigoplus Y} = \left( || \mathcal{K}_{12} y_{n_{i_k}} - l_x' ||^2_X + ||\mathcal{K}_{21} x_{n_{i_k}} - l_y'||^2_Y\right) ^{1/2} \le \sqrt{2}\epsilon = \epsilon',
\end{equation}
hence $\mathcal{K}_\mr{od}$ is compact.
\end{proof}

\begin{theorem}[]
Given the compact operators $\mathcal{K}_{11}:X\rightarrow X$, $\mathcal{K}_{12}:Y\rightarrow X$, $\mathcal{K}_{21}:X\rightarrow Y$, and $\mathcal{K}_{22}:Y\rightarrow Y$,
the block operator
\begin{equation}
\mathcal{K} = \begin{pmatrix}
\mathcal{K}_{11} & \mathcal{K}_{12} \\
\mathcal{K}_{21} & \mathcal{K}_{22}
\end{pmatrix}\,:\quad X \bigoplus Y \rightarrow X \bigoplus Y
\end{equation}
is compact.
\end{theorem}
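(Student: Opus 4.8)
The plan is to split $\mathcal{K}$ into its block-diagonal and block-off-diagonal parts and invoke the two preceding propositions. Concretely, write
\begin{equation}
\mathcal{K} = \begin{pmatrix} \mathcal{K}_{11} & 0 \\ 0 & \mathcal{K}_{22} \end{pmatrix} + \begin{pmatrix} 0 & \mathcal{K}_{12} \\ \mathcal{K}_{21} & 0 \end{pmatrix} = \mathcal{K}_\mr{d} + \mathcal{K}_\mr{od}\,,
\end{equation}
where $\mathcal{K}_\mr{d}$ is compact by \Cref{Kd_comp} and $\mathcal{K}_\mr{od}$ is compact by \Cref{Kod_comp}. Since the sum of two compact operators on a normed space is again compact \citep[Theorem~1.4]{colton2013integral}, we conclude that $\mathcal{K} = \mathcal{K}_\mr{d} + \mathcal{K}_\mr{od}$ is compact, which is the assertion.

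For the $N\times N$ corollary announced at the start of this appendix I would argue in one of two equivalent ways. The direct way: denote by $\iota_i : X_i \to \bigoplus_{k=1}^N X_k$ the canonical inclusion and by $\mathrm{pr}_j : \bigoplus_{k=1}^N X_k \to X_j$ the canonical projection, both of which are bounded (indeed of norm one) for the direct-sum norm introduced above; then $\mathcal{K} = \sum_{i,j=1}^N \iota_i\, \mathcal{K}_{ij}\, \mathrm{pr}_j$, each summand is the composition of the compact operator $\mathcal{K}_{ij}$ with bounded operators and hence compact \citep[Theorem~1.5]{colton2013integral}, and a finite sum of compact operators is compact \citep[Theorem~1.4]{colton2013integral}. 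The inductive way: split $\bigoplus_{i=1}^N X_i = \big(\bigoplus_{i=1}^{N-1} X_i\big) \oplus X_N$, so that $\mathcal{K}$ becomes a $2\times 2$ block operator whose top-left block is compact by the induction hypothesis and whose remaining three blocks are compact by the same arguments used in \Cref{Kd_comp} and \Cref{Kod_comp}; an application of the $2\times 2$ theorem then closes the induction.

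There is essentially no serious obstacle in this proof; it is a rearrangement of classical facts. The only points that warrant a word of care are that the inclusion and projection maps attached to the direct-sum norm are bounded, so that pre- and post-composition with them preserves compactness, and that one is using the standard results that the compact operators form a linear subspace of the bounded operators and are a two-sided ideal under composition with bounded operators --- both of which are already invoked elsewhere in the paper. Consequently the argument is short and self-contained.
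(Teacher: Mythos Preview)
Your proof of the theorem is essentially identical to the paper's: both decompose $\mathcal{K} = \mathcal{K}_\mr{d} + \mathcal{K}_\mr{od}$, invoke \Cref{Kd_comp} and \Cref{Kod_comp}, and conclude via \citep[Theorem~1.4]{colton2013integral}. Your additional remarks on the $N\times N$ corollary (via inclusion/projection maps or via induction on a $2\times 2$ splitting) differ slightly from the paper's approach, which instead decomposes the block operator into $2N-1$ summands along the (sub/super)diagonals and appeals to the same reasoning as in \Cref{Kd_comp} and \Cref{Kod_comp}; all three routes are correct, and yours is arguably cleaner since it avoids re-proving the diagonal-style compactness for each band.
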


\begin{proof}
%Indeed \citep[Theorem~1.4]{colton2013integral}, 
The operator $\mathcal{K}$ can be written as the sum of two operators involving the diagonal and the off-diagonal terms, named respectively $\mathcal{K}_\mr{d}$ and $\mathcal{K}_\mr{od}$,
\begin{equation}
\mathcal{K} = \underbrace{\begin{pmatrix}
\mathcal{K}_{11} & 0 \\
0 & \mathcal{K}_{22}
\end{pmatrix}}_\text{$\mathcal{K}_\mr{d}$} + \underbrace{\begin{pmatrix}
0 & \mathcal{K}_{12} \\
\mathcal{K}_{21} & 0
\end{pmatrix}}_\text{$\mathcal{K}_\mr{od}$}.
\end{equation}
The two operators $\mathcal{K}_\mr{d}$ and $\mathcal{K}_\mr{od}$ are compact, as shown in \Cref{Kd_comp} and \Cref{Kod_comp}. Therefore, $\mathcal{K}$ is compact as the sum of compact operators \citep[Theorem~1.4]{colton2013integral}.
\end{proof}

\begin{corollary}[]
Any block operator whose blocks are compact operators is compact.
\end{corollary}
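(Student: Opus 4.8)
The plan is to prove the corollary by induction on the number of block rows (and columns) $N$, using the $2\times 2$ Theorem just established as the inductive step. The base case $N=1$ is immediate, since then the block operator is a single compact operator. For the inductive step, assume the statement holds for any block operator with $N-1$ block rows and columns, and let $\mathcal{K}=(\mathcal{K}_{ij})_{i,j=1}^{N}$ act on $\bigoplus_{i=1}^{N}X_i$ with every $\mathcal{K}_{ij}$ compact. Setting $X\coloneqq X_1$ and $Y\coloneqq\bigoplus_{i=2}^{N}X_i$, the associativity of the direct-sum construction gives $\bigoplus_{i=1}^{N}X_i=X\bigoplus Y$ with \emph{identical} norm, so $\mathcal{K}$ may be regarded as the $2\times2$ block operator
\begin{equation}
\mathcal{K}=\begin{pmatrix} \mathcal{K}_{11} & \mathcal{K}_{\mathrm{row}} \\ \mathcal{K}_{\mathrm{col}} & \mathcal{K}_{\mathrm{rr}} \end{pmatrix}\,:\quad X\bigoplus Y\rightarrow X\bigoplus Y,
\end{equation}
where $\mathcal{K}_{\mathrm{row}}:Y\rightarrow X$ collects the first block row except its $(1,1)$ entry, $\mathcal{K}_{\mathrm{col}}:X\rightarrow Y$ collects the first block column except its $(1,1)$ entry, and $\mathcal{K}_{\mathrm{rr}}:Y\rightarrow Y$ is the $(N-1)\times(N-1)$ block operator obtained by deleting the first block row and column.

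Next I would check that each of the four blocks of this $2\times2$ decomposition is compact. The diagonal block $\mathcal{K}_{11}:X\rightarrow X$ is compact by hypothesis, and the diagonal block $\mathcal{K}_{\mathrm{rr}}:Y\rightarrow Y$ is compact by the inductive hypothesis (when $N=2$ it is simply $\mathcal{K}_{22}$, compact by hypothesis). For the rectangular off-diagonal blocks one writes $\mathcal{K}_{\mathrm{row}}=\sum_{j=2}^{N}\mathcal{K}_{1j}P_j$ and $\mathcal{K}_{\mathrm{col}}=\sum_{i=2}^{N}\iota_i\mathcal{K}_{i1}$, where $P_j:Y\rightarrow X_j$ is the bounded coordinate projection and $\iota_i:X_i\rightarrow Y$ the bounded coordinate embedding; each summand is the composition of a compact with a bounded operator, hence compact \citep[Theorem~1.5]{colton2013integral}, and a finite sum of compact operators is compact \citep[Theorem~1.4]{colton2013integral}. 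With all four blocks compact, the $2\times2$ Theorem applies verbatim and yields that $\mathcal{K}$ is compact, closing the induction.

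There is essentially no genuine obstacle in this argument; the only points that deserve a sentence of care are, first, that collapsing the $N$-fold direct sum into the two-fold direct sum $X_1\bigoplus\big(\bigoplus_{i=2}^{N}X_i\big)$ leaves the norm literally unchanged, so that the $2\times2$ Theorem is directly applicable without invoking any equivalence-of-norms estimate, and second, that "rectangular" blocks of compact operators are themselves compact, which is an immediate consequence of the ideal property of the compact operators used above. As an alternative to the induction, one could argue in one stroke that any $N\times N$ block operator equals the finite sum $\sum_{i,j=1}^{N}\iota_i\,\mathcal{K}_{ij}\,P_j$ of compositions of compact operators with bounded coordinate maps, hence is compact by the same two cited theorems; I would nonetheless present the inductive version in the paper, since it stays closest to the $2\times2$ Theorem that has just been proved and makes the role of that result explicit.
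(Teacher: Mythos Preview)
Your proof is correct, but it takes a genuinely different route from the paper's. The paper decomposes the $N\times N$ block operator as a sum of $(2N-1)$ ``diagonal'' pieces (one for each principal and non-principal diagonal), illustrates this for $N=3$, and then asserts that each such piece is compact by induction ``for the same reasons outlined to prove the compactness of $\mathcal{K}_\mathrm{d}$ and $\mathcal{K}_\mathrm{od}$'' --- that is, by rerunning the subsequence-extraction arguments of \Cref{Kd_comp} and \Cref{Kod_comp} for larger block sizes. Your approach instead regroups the $N$-fold direct sum as $X_1\bigoplus\big(\bigoplus_{i\ge2}X_i\big)$, views $\mathcal{K}$ as a genuine $2\times2$ block operator, verifies compactness of the four resulting blocks (the lower-right one by the inductive hypothesis, the rectangular ones via the ideal property), and invokes the $2\times2$ Theorem directly. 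Your route is arguably cleaner: it uses the Theorem as a black box rather than reopening the proofs of the Propositions, and the off-diagonal compactness via $\iota_i$ and $P_j$ is fully explicit where the paper leaves the analogous step implicit. Your one-shot alternative $\mathcal{K}=\sum_{i,j}\iota_i\mathcal{K}_{ij}P_j$ is the shortest path of all and bypasses both the induction and the $2\times2$ Theorem entirely.
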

\begin{proof}
A  $N\times N$ block operator whose blocks are compact can be decomposed as the summation of $(2N-1)$ block operators, each of them null apart one diagonal (principal or not), equal to the same diagonal of the original operator. For example, in the case $N = 3$,
\begin{align}
\mathcal{K} &= \begin{pmatrix}
\mathcal{K}_{11} & \mathcal{K}_{12} & \mathcal{K}_{13} \\
\mathcal{K}_{21} & \mathcal{K}_{22} & \mathcal{K}_{23} \\
\mathcal{K}_{31} & \mathcal{K}_{32} & \mathcal{K}_{33}
\end{pmatrix}  \nonumber\\
&= \begin{pmatrix}
0 & 0 & 0 \\
0 & 0 & 0 \\
\mathcal{K}_{31} & 0 & 0
\end{pmatrix}+
\begin{pmatrix}
0 & 0 & 0 \\
\mathcal{K}_{21} & 0 & 0 \\
0 & \mathcal{K}_{32} & 0
\end{pmatrix}+
\begin{pmatrix}
\mathcal{K}_{11} & 0 & 0 \\
0 & \mathcal{K}_{22} & \\
0 & 0 & \mathcal{K}_{33}
\end{pmatrix}+
\begin{pmatrix}
0 & \mathcal{K}_{12} & 0 \\
0 & 0 & \mathcal{K}_{23} \\
0 & 0 & 0
\end{pmatrix}+
\begin{pmatrix}
0 & 0 & \mathcal{K}_{13} \\
0 & 0 & 0 \\
0 & 0 & 0
\end{pmatrix}.
\end{align}
Then, the compactness of $\mathcal{K}$ can be shown by induction,
Indeed, since each term in this summation is compact for the same reasons outlined to prove the compactness of $\mathcal{K}_\mr{d}$ and $\mathcal{K}_\mr{od}$ in the $2\times 2$ operator case, the $N\times N$ block operator $\mathcal{K}$ is compact.
\end{proof}

\section{Analytic expression of the primal and dual Laplacian matrices}
\label{sec:appendix_dualLap}

We provide here the analytic expression of the elements of the matrices $\mat \Delta_i$ and $\tilde{\mat \Delta}_i$ discretizing the Laplace-Beltrami operator by means of pyramid and dual pyramid functions as an implementation aid.
In the following, we omit the subscript $_i$ that indicates the reference surface mesh $\Gamma_{h,i}$, its barycentric refinement $\bar{\Gamma}_{h,i}$, or its dual counterpart $\tilde{\Gamma}_{h,i}$ and that is applied to matrices, basis functions, and geometrical entities of the mesh, to simplify the notation.

By analytic evaluation of $\left( \nabla_\Gamma \lambda_m, \nabla_\Gamma \lambda_n \right)_{L^2(\Gamma_h)}$, the expression
\begin{equation}
\left(\LapLm\right)_{mn} = \begin{cases}
      \sum\limits_{c\in \mathit{Adj}(v_m)} \frac{\abs{e_{c,v_m}}^2}{4 \abs{c}} & \text{if $m=n$}\\
      \sum\limits_{c\in \mathit{Adj}(e_{mn})} \frac{\abs{e_{c,v_m}}\abs{e_{c,v_n}}}{4 \abs{c}} \cos(\theta_{c,mn}) & \text{if $v_m$, $v_n$ are connected by $e_{mn}$}\\
      0 & \text{otherwise}
    \end{cases}
    \label{eqn:Delta_anal}
\end{equation}
is retrieved, where $\mathit{Adj}(v_m)$ is the set of cells adjacent to the vertex $v_m$, $\mathit{Adj}(e_{mn})$ is the set of cells adjacent to the edge $e_{mn}$ connecting vertex $v_m$ and vertex $v_n$, $\abs{e_{c,v_m}}$ is the length of the edge of cell $c$ opposed to vertex $v_m$, $\abs{c}$ denotes the area of cell $c$.
As a general remark, the geometrical entities introduced up to now are elements of the primal mesh $\Gamma_h$, in symbols $v_m\in \Gamma_h$, $e_{c,v_{m}}\in \Gamma_h$, $e_{mn}\in \Gamma_h$, and $c\in \Gamma_h$.
The angle $\theta_{c,mn}$ is defined as
\begin{equation}
\theta_{c,mn} \coloneqq  \theta_{c,m}+\theta_{c,n}\,,
\end{equation}
where $\theta_{c,m}$ is the interior angle of cell $c$ at vertex $v_m$. In particular, given the length of the edges of $c$, it can be evaluated as 
\begin{equation}
\theta_{c,m} = \text{angle}\left(\abs{e_{mn}}, \abs{e_{c,v_n}}, \abs{e_{c,v_m}}\right)\, ,
\end{equation}
where
\begin{equation}
\text{angle}\left(\abs{e_1}, \abs{e_2}, \abs{e_3}\right) \coloneqq \arccos\left(\frac{\abs{e_1}^2+\abs{e_2}^2-\abs{e_3}^2}{2\abs{e_1}\abs{e_2}}\right)\,.
\end{equation}
returns the interior angle of a triangle opposed to its edge $e_3$ with $\abs{e_1}$, $\abs{e_2}$, and $\abs{e_3}$ denoting the length of the three sides of the triangle.
The notation employed in equation \eqref{eqn:Delta_anal} is shown in \Cref{fig:primalLap_notation}.

To define an analytic formula for the dual Laplacian matrix $\tilde{\mat \Delta}$, we express its elements as linear combinations of 
\begin{equation}
(\bar{\mat \Delta})_{mj,nk}=\left(\nabla_\Gamma \bar{\lambda}_{m,j},\nabla_\Gamma\bar{\lambda}_{n,k} \right)_{L^2(\bar{\Gamma_h})}
\end{equation}
resulting in
\begin{equation}
(\tilde{\mat \Delta})_{mn} = \sum\limits_{j=1}^7\sum\limits_{k=1}^7 \frac{1}{\mathit{NoC}(\bar{v}_{m,j})}\frac{1}{\mathit{NoC}(\bar{v}_{n,k})}(\bar{\mat \Delta})_{mj,nk},
\end{equation}
where the notation applied is the same as in \Cref{sec:further_notation} (without the mesh index subscript $_i$).
The analytic expression of $(\bar{\mat \Delta})_{mj,nk}$ is known from equation \eqref{eqn:Delta_anal}, as
\begin{equation}
(\bar{\mat \Delta})_{mj,nk} = \begin{cases}
      \sum\limits_{\bar{c}\in \mathit{Adj}(\bar{v}_{m,j})} \frac{\abs{\bar{e}_{\bar{c},\bar{v}_{m,j}}}^2}{4 \abs{\bar{c}}} & \text{if $m=n$ and $j=k$}\, ,\\
      \sum\limits_{\bar{c}\in \mathit{Adj}(\bar{e}_{mj,nk})} \frac{\abs{      \bar{e}_{\bar{c},\bar{v}_{m,j}}}\, \abs{\bar{e}_{\bar{c},\bar{v}_{n,k}}}}
      {4 \abs{\bar{c}}} \cos(\theta_{\bar{c},mj,nk}) & \text{if $\bar{v}_{m,j}$, $\bar{v}_{n,k}$ are connected by $\bar{e}_{mj,nk}$}\, ,\\
      0 & \text{otherwise.}
    \end{cases}
    \label{eqn:dualLapMatrix}
\end{equation}
As above, the geometrical entities belonging to the barycentrically refined mesh $\bar{\Gamma}_h$ are denoted with an upper bar. In particular $\mathit{Adj}(\bar{v}_{m,j})$ is the set of cells of $\bar{\Gamma}_h$ adjacent to the vertex $\bar{v}_{m,j}$, $\mathit{Adj}(\bar{e}_{mj,nk})$ is the set of cells of $\bar{\Gamma}_h$ adjacent to the edge $\bar{e}_{mj,nk}\in\bar{\Gamma}_h$ connecting the vertices $\bar{v}_{m,j}$, and $\bar{v}_{n,k}$, $\bar{e}_{\bar{c},\bar{v}_{m,j}}\in\bar{\Gamma}_h$ is the edge of cell $\bar{c}\in\bar{\Gamma}_h$ opposed to $\bar{v}_{m,j}\in\bar{\Gamma}_h$, $\abs{\bar{c}}$ is the area of cell $\bar{c}\in\bar{\Gamma}_h$. The angle $\theta_{\bar{c},mj,nk}$ is
\begin{equation}
\theta_{\bar{c},mj,nk} \coloneqq  \theta_{\bar{c},mj} + \theta_{\bar{c},nk},
\end{equation}
where $\theta_{\bar{c},mj}$ is the interior angle of the cell $\bar{c}$ at the vertex $\bar{v}_{m,j}$.

All the quantities introduced in equation \eqref{eqn:dualLapMatrix} are known from the geometrical properties of the primal mesh $\Gamma_h$.
For example, by denoting as $c \subset \Gamma_h$ the cell containing $\bar{c}\subset \bar{\Gamma}_h$, we recognize that $\abs{\bar{c}} = \abs{c}/6$, directly following from the properties of the barycentric refinement.
Moreover, the expression of the edge length $\abs{\bar{e}_{\bar{c},\bar{v}_{m,j}}}$ reads
\begin{equation}
\abs{\bar{e}_{\bar{c},\bar{v}_{m,j}}} = \begin{cases}
 \frac{1}{2}\abs{e_{\bar{c}}} & \text{if $\mathit{NoC}(\bar{v}_{m,j})=1$}\, ,\\
    \frac{2}{3}\abs{m_{\bar{c},{\text{vert}}}} & \text{if $\mathit{NoC}(\bar{v}_{m,j})=2$}\, ,\\
    \frac{1}{3}\abs{m_{\bar{c},{\text{side}}}} & \text{otherwise,}
\end{cases}
    \label{eqn:lapdual_edgedual}
\end{equation}
where $e_{\bar{c}}$ is the side of $c\supset\bar{c}$ with an infinite set of points in common with $\bar{c}$.
We denote by $m_{\bar{c},\text{vert}}$ the median of $c\supset\bar{c}$ such that the intersection $m_{\bar{c},\text{vert}} \cap \bar{c}$ contains infinite points, including a vertex of $c$.
The variable $m_{\bar{c},\text{side}}$ denotes the median of $c\supset\bar{c}$ such that the intersection $m_{\bar{c},\text{side}} \cap \bar{c}$ contains infinite points, but not including a vertex of $c$.
Finally, the angle $\theta_{\bar{c},mj}$ can be retrieved as
\begin{equation}
\theta_{\bar{c},mj} = 
\begin{cases}
    \text{angle}\left(\frac{2}{3}\abs{m_{\bar{c},\text{vert}}}, \frac{1}{3}\abs{m_{\bar{c},\text{side}}}, \frac{1}{2}\abs{e_{\bar{c}}}\right) & \text{if $\mathit{NoC}(\bar{v}_{m,j})=1$}\, ,\\
    \text{angle}\left(\frac{1}{3}\abs{m_{\bar{c},\text{side}}}, \frac{1}{2}\abs{e_{\bar{c}}}, \frac{2}{3}\abs{m_{\bar{c},\text{vert}}}\right) & \text{if $\mathit{NoC}(\bar{v}_{m,j})=2$}\, ,\\
    \text{angle}\left(\frac{1}{2}\abs{e_{\bar{c}}}, \frac{2}{3}\abs{m_{\bar{c},\text{vert}}}, \frac{1}{3}\abs{m_{\bar{c},\text{side}}}\right) & \text{otherwise.}
    \end{cases}
    \label{eqn:lapdual_theta_dual}
\end{equation}
Figures \ref{fig:pyr_not2}, \ref{fig:pyr_not3}, and \ref{fig:pyr_not4} represent the notation employed.

\begin{figure}
   \begin{minipage}[b]{0.48\textwidth}
     \centering
     \includegraphics[width=1\linewidth]{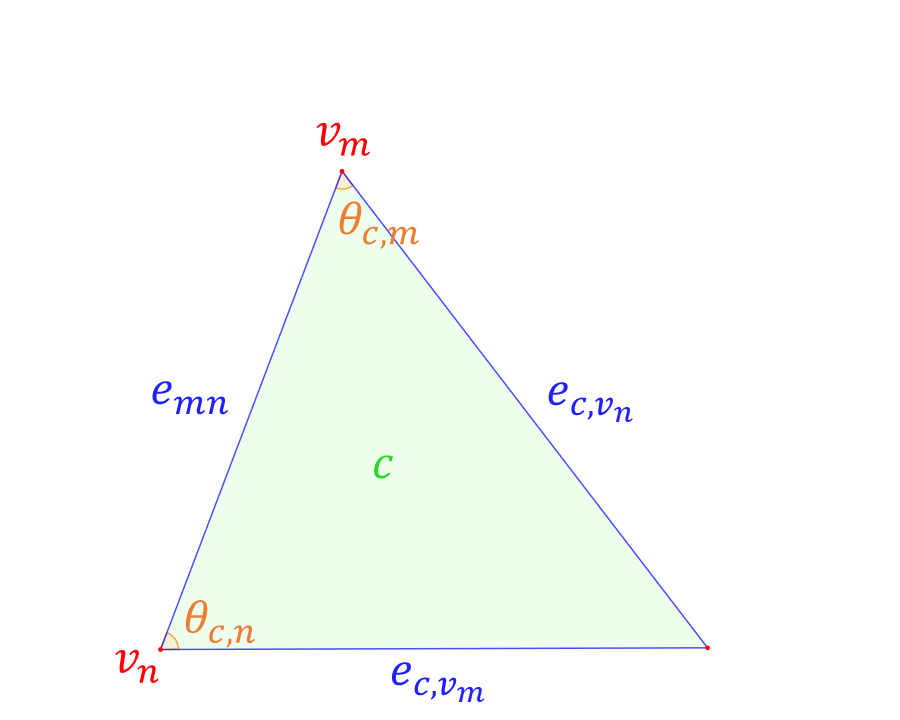}
     \caption{A primal cell in $\Gamma_h$ with the notation for the definition of $\left(\LapLm\right)_{mn}$ (equation \eqref{eqn:Delta_anal}).}
     \label{fig:primalLap_notation}
   \end{minipage}\hfill
   \begin{minipage}[b]{0.48\textwidth}
     \centering
     \includegraphics[width=1\linewidth]{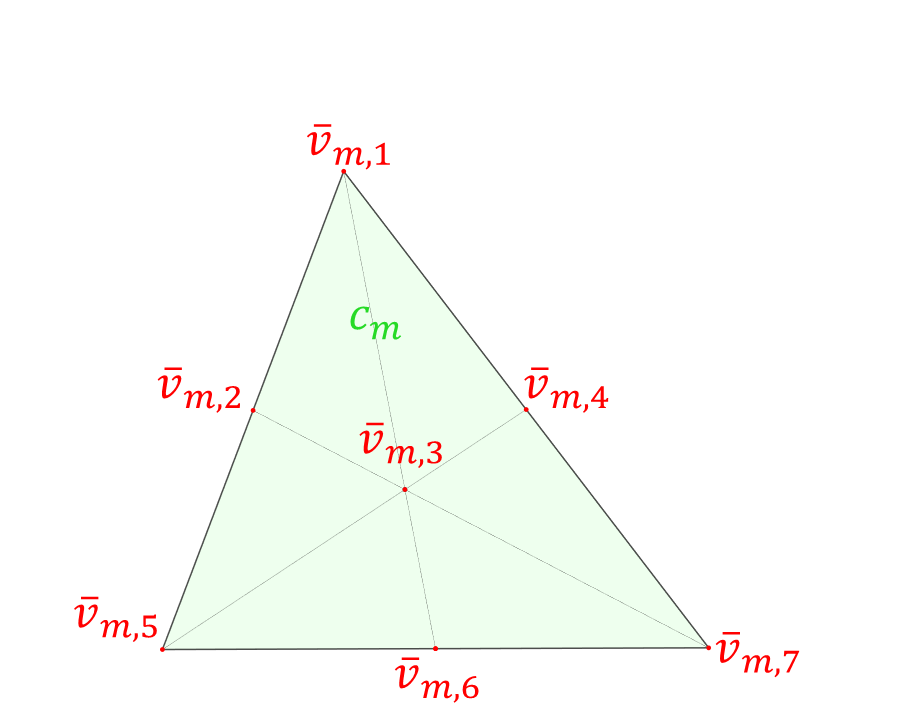}
     \caption{Seven vertices of $\bar{\Gamma}_i$ lie in the cell $c_m\in\Gamma_i$ (the numbering is randomly assigned).}
     \label{fig:pyr_not2}
   \end{minipage}
\end{figure}

\begin{figure}
   \begin{minipage}[b]{0.48\textwidth}
     \centering
     \includegraphics[width=1\linewidth]{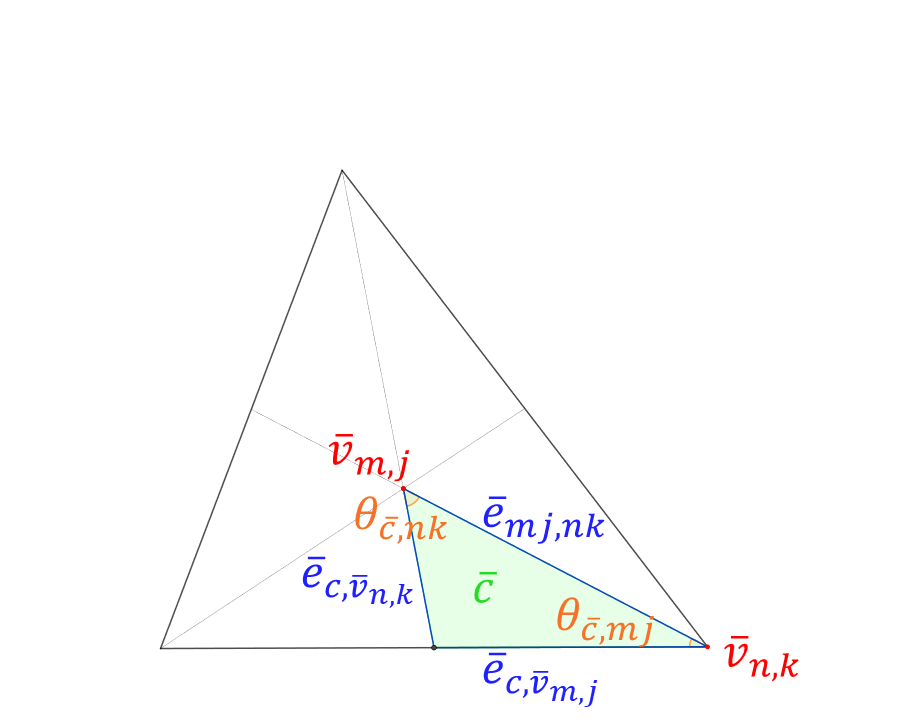}
     \caption{A cell in the barycentrically refined mesh $\bar{\Gamma}_h$ with the notation for the definition of $(\bar{\mat \Delta})_{mj,nk}$ (equation \eqref{eqn:dualLapMatrix}).}
     \label{fig:pyr_not3}
   \end{minipage}\hfill
   \begin{minipage}[b]{0.48\textwidth}
     \centering
     \includegraphics[width=1\linewidth]{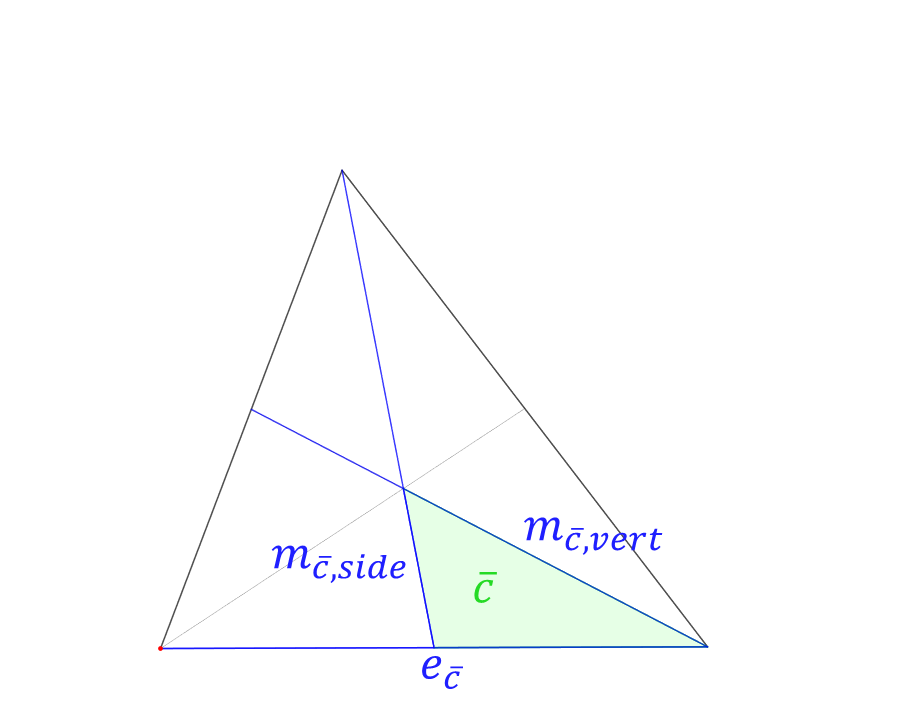}
     \caption{Notation for the definition of $\abs{\bar{e}_{\bar{c},\bar{v}_{m,j}}}$ (equation \eqref{eqn:lapdual_edgedual}) and $\theta_{\bar{c},mj}$ (equation \eqref{eqn:lapdual_theta_dual}).}
     \label{fig:pyr_not4}
   \end{minipage}
\end{figure}

\section*{Acknowledgment}
This work was supported in part by the European Research Council (ERC) through the European Union’s Horizon 2020 Research and Innovation Programme under Grant 724846 (Project 321), in part by the European Innovation Council (EIC) through the European Union’s Horizon Europe research Programme under Grant 101046748 (Project CEREBRO),  in part by the ANR Labex CominLabs under the project ``CYCLE'', and in part by the Italian Ministry of University and Research within the Program FARE, CELER, under Grant R187PMFXA4. 

\bibliographystyle{plainnat}

\bibliography{private_vgiunzioni.bib}

\end{document}